\newtheorem{theorem}{Theorem}[section]
\newtheorem{corollary}[theorem]{Corollary}
\newtheorem{lemma}[theorem]{Lemma}
\newtheorem{proposition}[theorem]{Proposition}
\newtheorem{problem}{Problem}[section]
\theoremstyle{definition}
\newtheorem{definition}[problem]{Definition}
\theoremstyle{remark}
\newtheorem{remark}[problem]{Remark}
\numberwithin{equation}{section}
\begin{document}

\title{A Classification Theorem for Steady Euler Flows}
\author{Tarek M. Elgindi}
\address{Department of mathematics, Duke University, Durham, NC, USA}
\email{tarek.elgindi@duke.edu}

\author{Yupei Huang}
\address{Department of mathematics, Duke University, Durham, NC, USA}
\email{yh298@duke.edu}
\author{Ayman Said}
\address{Laboratoire de Mathématiques de Reims (LMR-CNRS UMR9008), Reims, France}
\email{ayman.said@cnrs.fr}
\author{Chunjing Xie}
\address{School of mathematical Sciences, Institute of Natural Sciences, Ministry
of Education Key Laboratory of Scientific and Engineering Computing, and CMA-Shanghai,
Shanghai Jiao Tong University, 800 Dongchuan Road, Shanghai, China}
\email{cjxie@sjtu.edu.cn}

\date{}

\begin{abstract}
 Fix an analytic simply connected domain $\Omega\subset\mathbb{R}^2.$ We show that all analytic steady states of the Euler equations with stream function $\psi$ are either radial or solve a semi-linear elliptic equation of the form $\Delta \psi = F(\psi)$ with Dirichlet boundary conditions. In particular, if $\Omega$ is not a ball, then there exists a one to one correspondence between analytic steady states of the Euler equations and analytic solutions of equations of the form $\Delta \psi = F(\psi)$ with Dirichlet boundary conditions.

\end{abstract}
\keywords{Steady Euler equations, stream function, analytic, critical point, moving plane method}

\subjclass{35Q31, 35N25, 35J15, 35B53}

\maketitle

\setcounter{tocdepth}{1}
\tableofcontents
\section{Introduction and Main Results}\label{secintroduction}
The study of steady Euler flows is old and vast, the first such solutions appearing in Euler's original paper \cite{euler1757principes}. The interest in steady flows is natural, both from a mathematical and a practical point of view. From the mathematical side, steady flows play a distinguished role in the long-time dynamics of all solutions, as they do in any dynamical system. Indeed, the first step to understanding the long-time dynamics of any system is to have a deep and complete understanding of the steady solutions. While it is well known that the Euler equations admit an infinite-dimensional family of steady states on any two-dimensional domain, little is known about the global properties of the set of solutions.  The purpose of this work is to take a step in that direction.     

\subsection{The Euler Equations}
We begin by recalling the 2D Euler equations for incompressible flow on a simply connected bounded domain $\Omega\subset\mathbb{R}^2$:
\begin{equation}\label{EE}
\partial_t\omega + u\cdot\nabla\omega=0,
\end{equation}
\begin{equation}\label{BS}
u=\nabla^\perp \Delta^{-1}\omega.
\end{equation}
Here, $\omega$ is the vorticity of the fluid and $u$ is its velocity field. The symbol $\nabla^\perp$ refers to the rotated gradient $\nabla^\perp=(-\partial_2,\partial_1)$, while $\Delta^{-1}$ is the inverse of the Dirichlet Laplacian on $\Omega.$  It is a well-known fact that sufficiently smooth solutions to \eqref{EE}-\eqref{BS} enjoy global existence and uniqueness, though $\omega$ may lose its smoothness in the long-time limit, $t\rightarrow\infty.$ 

One of the most captivating aspects of the 2D Euler equation is in the long-time behavior of solutions.  An amazing fact, borne out of numerical and physical experiments, is that 2D Euler solutions starting from ``any old data'' tend to relax to much simpler states in the long-time limit. This is conjectured to occur, despite the conservative nature of the equation and the absence of a clear relaxation mechanism. Recent breakthroughs have been achieved in our understanding of this in certain \emph{perturbative} regimes, starting with the work \cite{bedrossian2015inviscid} and then the later works \cite{ionescu2020inviscid,NonlinearIonescu,masmoudi2024nonlinear}.  See also \cite{drivas2023twisting} for a recent work leveraging the classical stability theory of steady states to make some long-time statements. Understanding the mechanism for this type of behavior outside the perturbative regime appears to be well outside the reach of any of the available techniques in the field. In order to be able to study such large data phenomena, perhaps in the long-time limit, it is necessary to build a flexible theoretical framework that will allow us to study questions of a more global nature. For us, this begins with having a robust understanding of Euler steady states. 

\subsection{Steady States}

When studying steady solutions, it is helpful to introduce the stream function \[\psi=\Delta^{-1}\omega,\] in which case the steady Euler equations can be written as:
\begin{equation}\nabla^\perp\psi\cdot\nabla\Delta\psi:=\label{SEEBracket}\{\psi,\Delta\psi\}=0,\end{equation} where we note that $\psi$ is chosen to be zero on $\partial\Omega.$  Classically, there are three classes of solutions that have been considered: 
\begin{itemize}
\item Solutions for which $\psi$ is invariant under translation in a fixed direction $e$, called shear flows. 
\item Solutions for which $\psi$ is invariant under rotations about a fixed point $x_0,$ called radial flows. 
\item Solutions for which $\psi$ satisfies a semilinear elliptic equation:
\begin{equation}\label{SemilinearElliptic}\Delta \psi= F(\psi),\end{equation} for some function $F:\mathbb{R}\rightarrow\mathbb{R}.$
\end{itemize}
The presence of the first two is due to the symmetries of the Laplacian and the Poisson bracket. The third type is based on the simple observation that 
\[\{\psi, F(\psi)\}=F'(\psi)\{\psi,\psi\}=0,\] for any $F\in C^1.$ Interestingly, it was only recently shown that there exist steady states strictly\footnote{It is easy to find $C^\infty$ solutions consisting of two compactly supported radial bumps with respect to different centers that technically aren't of any of these types, though they are locally radial.} outside of these three types in the work \cite{gomez2021existence} with some regularity and with any \emph{finite} regularity in the work \cite{enciso2024smooth}. Both of these works rely on highly non-trivial bifurcation methods.

These three classes of steady solutions have a very nice structure. A quite robust linear stability theory has been developed for the first two, through the study of the Rayleigh equation \cite{drazin2004hydrodynamic,lin2003instability,tollmien1935allgemeines}. The stability theory for the third type can be investigated using properties of the linearized operator associated with \eqref{SemilinearElliptic}:
\[\mathcal{L}=\Delta-F'(\psi),\] see \cite{arnold1966geometrie,arnold1966principe,arnold2009topological, lin2004nonlinear,lin2004some}. Given the tools the community has developed over the years to study these three types of steady states, it would be great if we were able to show that these are the \emph{only} steady states out there. This, and more, has been established locally in some non-degenerate settings \cite{choffrut2012local,constantin2021flexibility, coti2023stationary, elgindi2022regular}  (here, non-degenerate means that the Schr\"odinger operator $\mathcal{L}$ above is invertible). 

In recent years, there has appeared a quite large literature on rigidity results for steady states. These results can be classified into two types: those making structural assumptions and those making smallness assumptions. In a series of papers, Hamel and Nadirashvili \cite{hamel2017shear,hamel2019liouville,hamel2021circular} established a number of rigidity results on symmetric domains assuming, for example, that the velocity field does not vanish on the domain. For example, they proved in \cite{hamel2021circular} that sufficiently regular steady states on a regular annulus, say $B_2(0)\setminus B_1(0)$, with non-vanishing velocity must be radial. Of course, as is exemplified by the Dirichlet eigenfunctions, the non-vanishing of the velocity field is essential to such a result. See also the work of Li, Lv, Shahgholian, and Xie \cite{Li2022Poiseuille} on the rigidity of Poiseuille flows in a strip and the work of Gui, Xie, and Xu\cite{Gui2024Euler} that studies the rigidity of steady Euler flows in terms of flow angles. Another interesting recent result, among others,  given by G\'omez-Serrano, Park, Shi, and Yao \cite{gomez2021symmetry} establishes the radial symmetry of solutions with compactly supported and non-negative vorticity. On the side of the results of rigidity under smallness conditions, there are now some results showing that any solution sufficiently close to some special shear flow must be a shear flow \cite{coti2023stationary,lin2011inviscid}. As an example, Lin and Zeng \cite{lin2011inviscid} proved that traveling-wave Euler solutions that are sufficiently close to the Couette flow on $\mathbb{T}\times [0,1]$ must be steady shear flows.  Finally, we mention the work of Constantin, Drivas, and Ginsberg \cite{constantin2021flexibility} that establishes results on both flexibility and rigidity in general domains, the main question being whether stable solutions inherit the symmetries of the domain. See also the works \cite{CastroLear1,CastroLear2, drivas2023islands, nualart2023zonal} for further results on flexibility and rigidity in various contexts. We now state our main results.
\subsection{Main Results}

\begin{theorem}\label{MainTheorem}
Let $\Omega$ be a simply connected and bounded domain of $\mathbb{R}^2$. Assume that $\psi$ is analytic on $\bar\Omega$, is constant on $\partial\Omega$, and satisfies the steady Euler equation \eqref{SEEBracket}. Let $[a,b]$ be the range of $\psi$ on $\bar{\Omega}.$ Then, one of the following must hold.
\begin{itemize}
\item $\Omega$ is a ball and $\psi$ is radial. 
\item There exists $F\in C([a,b])$ that is analytic on the open interval 
$(a,b)$, for which \[\Delta\psi=F(\psi).\] 
\end{itemize}
\end{theorem}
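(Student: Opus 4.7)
The strategy is to build $F$ locally from the identity $\{\psi, \Delta\psi\} = 0$, propagate it globally using analyticity, and argue that the only obstruction to such a global construction forces $\Omega$ to be a disk and $\psi$ radial. Let $\Sigma = \{\nabla\psi = 0\}$; since $\psi$ is analytic and may be assumed nonconstant, $\Sigma$ is a real-analytic subvariety of real dimension at most one. On the open set $\Omega^{\ast} = \Omega \setminus \Sigma$, the bracket condition says $\nabla(\Delta\psi)$ and $\nabla\psi$ are everywhere parallel, so on each connected component $U \subset \Omega^{\ast}$ a standard flowbox/implicit function argument produces an analytic function $F_U : \psi(U) \to \mathbb{R}$ with $\Delta\psi|_U = F_U(\psi)$.

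For the globalization step I would lean on the preparatory results on analytic pairs $(f,g)$ with $\{f,g\} = 0$ promised in the abstract. Specifically, whenever two components $U_1, U_2 \subset \Omega^{\ast}$ meet along a critical arc $\gamma \subset \Sigma$ or at an isolated critical point $p \in \Sigma$ where $\psi$ takes a common value $c$, analyticity of $\psi$ and $\Delta\psi$ across $\Sigma$ combined with the identity principle should force $F_{U_1}$ and $F_{U_2}$ to agree on any nontrivial overlap of their value ranges. A connectedness argument on the adjacency graph of components of $\Omega^{\ast}$ then yields a single analytic $F$ defined on $\bigcup_U \psi(U)$, which is dense in $[a,b]$; continuity of $F$ on the closed interval follows from the boundary values of $\psi$ and $\Delta\psi$.

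The only way this gluing can fail is when there exist \emph{nested} components whose value ranges $\psi(U_j)$ are essentially disjoint, separated by analytic level curves of $\psi$. If $V$ is the innermost such component, then $\partial V$ is an analytic level set of $\psi$ on which $|\nabla\psi|$ is necessarily constant (because $F_V$ together with $\psi|_{\partial V}$ determine the first-order jet), and $\psi|_V$ solves the overdetermined semilinear problem $\Delta\psi = F_V(\psi)$ in $V$ with both $\psi$ and $|\nabla\psi|$ constant on $\partial V$. A modification of Serrin's moving plane method, as flagged in the abstract, then forces $V$ to be a disk and $\psi|_V$ radial. Iterating this rigidity outward through the nesting, and using that $\psi$ is constant on $\partial\Omega$, identifies $\Omega$ itself with a disk and $\psi$ with a radial function.

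The main technical obstacle will be executing the moving plane step at this level of generality: the nonlinearity $F_V$ carries no sign information and is only assumed continuous at the endpoints, so the classical Serrin argument needs genuine modification, and one has to use analyticity to cross the critical set $\Sigma$ without losing control. A secondary delicate point is the analytic structure of $\Sigma$ near degenerate critical curves, where $\psi$ is not locally coordinate-like; this is precisely where the general lemmas on analytic pairs $(f,g)$ satisfying $\{f,g\} = 0$ will have to do the heavy lifting to make the component-by-component gluing rigorous.
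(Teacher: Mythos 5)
Your high-level roadmap (local $F$, glue across the critical set, identify an innermost obstructing region, apply a symmetry result there, propagate radiality outward) matches the paper's architecture, but there are several concrete gaps that a correct execution would have to fill, and in a couple of places your stated reasoning is actually pointing in the wrong direction.

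First, the set you are gluing across. You take $\Sigma = \{\nabla\psi = 0\}$ and try to glue across all of it. The paper's key structural observation (Lemma~\ref{lemma:Puiseux inverse} and Proposition~\ref{KeyProposition}) is that the gluing actually succeeds across critical points of \emph{odd} vanishing order $d$: there $g$ is a Puiseux series in $(f - f(x_*))^{1/d}$, and this Puiseux $F$ analytically continues. The only genuine obstruction is a curve of critical points of \emph{even} order. Without this even/odd dichotomy your ``adjacency graph'' gluing step is not actually a connectedness argument on components of $\Omega\setminus\Sigma$; it is a gluing across a much smaller set $\mathcal{C}_e(\psi)$, and you need the Puiseux structure to know the continuations agree.

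Second, your characterization of the innermost region $V$ is off. You assert that on $\partial V$, ``$|\nabla\psi|$ is necessarily constant (because $F_V$ together with $\psi|_{\partial V}$ determine the first-order jet).'' This reasoning is not valid: $F_V$ determines $\Delta\psi$ on $\partial V$, not $\nabla\psi$, and a semilinear equation plus a Dirichlet value never forces the normal derivative to be constant. What the paper actually shows, and what the argument hinges on, is that the innermost obstructing curve is a curve of \emph{critical} points, so $\nabla\psi\equiv 0$ on $\partial\Omega_\Gamma$. This is a much stronger overdetermined condition than the classical Serrin setup, and establishing it is nontrivial: one needs Lemma~\ref{BoundaryCritical} (using non-analyticity of $F$ at the boundary value) and, crucially, the fact that for Euler steady states the vanishing degree is \emph{constant} along any connected critical curve (which in turn is what makes the curve analytic without cusps, via Lemma~\ref{lemma:no cusp}). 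Your proposal flags the ``analytic structure of $\Sigma$ near degenerate critical curves'' as delicate but does not have this constancy-of-degree mechanism, which is the thing that resolves it.

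Third, you plan a single moving-plane argument, but the problem splits into two cases that require different tools. If the boundary critical points have degree $\geq 3$, the nonlinearity $F$ vanishes at the boundary value and the moving plane method is not the right tool; the paper instead invokes Brock's continuous Steiner symmetrization result (Proposition~\ref{prop: steiner symmetry type}), after first showing $\psi$ is single-signed. Only in the degree-2 case is the moving plane method used, and even there the obstacles you name (``no sign information,'' ``only continuous at the endpoints'') are not the actual ones. The real difficulties are: near $\psi = 0$ the incremental quotient $c_\lambda = \frac{F(\psi)-F(\psi_\lambda)}{\psi - \psi_\lambda}$ blows up like $\psi^{-1/2}$, which is controlled by the quantitative bound $\psi\gtrsim \mathrm{dist}(\cdot,\partial\Omega_\Gamma)^2$ coming from $F(0)=1$, combined with Hopf lemmas for singular coefficients on small balls; and near $\psi = \max\psi$ the leading Puiseux coefficient of $F$ has a definite sign $a_{k_0-1}<0$ (Lemma~\ref{lemma:Puiseux expression near 1}) which makes the singular part of $c_\lambda$ have the \emph{favorable} sign for the maximum principle. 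Without identifying these two specific structural facts, the moving plane step cannot be carried out.

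Finally, there is an extendable/unextendable dichotomy your proposal skips: if $F$ is analytic at the boundary value, the semilinear relation actually extends past $\Gamma$ and one must enlarge the domain and iterate before hitting a curve where $F$ is genuinely non-analytic. Your ``iterate rigidity outward through the nesting'' step is in the right spirit but should be replaced by this extension argument followed by analytic continuation of the resulting radial symmetry to all of $\Omega$.
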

\vspace{-3mm}
\begin{remark}
   As $\psi$ is analytic in $\overline{\Omega}$ and $\partial \Omega$ is a level set of $\psi$, due to the structure theorem on the zero sets of real analytic functions (See Proposition \ref{structure of nodal set}), the boundary $\partial \Omega$ has to be a piecewise analytic curve.
\end{remark}
\begin{remark}
 We show moreover that $F$ is actually  H\"older continuous globally, admitting a convergent Puiseux series at the endpoints. In fact, the precise form of the Puiseux expansion of $F$ at $a$ and $b$, see Lemma \ref{lemma:Puiseux expression near 1}, allows us to define $\mathcal{L}=\Delta-F'(\psi)$ as a self-adjoint operator on suitable (weighted) spaces. In particular, it can be decomposed into an invertible Schr\"odinger operator plus a {relatively} compact perturbation. Hence the Fredholm {theory} can be applied to study its invertibility properties.  This is key to understanding the local structure of the set of steady states near arbitrary ones. We summarize this as an interesting consequence of the proof in the following corollary.
\end{remark}
\begin{corollary} For a given analytic, simply connected and bounded domain $\Omega$,
Suppose that $\psi\in C^\omega(\bar\Omega)$ (the set of real-analytic functions in $\bar{\Omega}$) takes a constant on $\partial \Omega$.  If \[\Delta\psi=F(\psi),\] and if $[a,b]$ is the range of $\psi$ in $\bar\Omega,$ then $F$ is analytic on $(a,b)$ and admits a (locally) convergent Puiseux series at the endpoints. 
\end{corollary}
The main novelty of Theorem \ref{MainTheorem}, compared with other results of rigidity in steady Euler flows mentioned above, is that we make no structural or smallness assumptions on $\Omega$ or on $\psi$. The results are also, in some sense, optimal, since \cite{enciso2024smooth} shows that we cannot relax the analyticity condition to finite smoothness, and Proposition \ref{main3} shows that the result cannot hold in general on multiply connected domains. 

\begin{remark}\label{analyticityremark} The analyticity of the velocity field is the major assumption of Theorem \ref{MainTheorem}. It would be nice if one could give a necessary and sufficient condition for the regularity of the velocity field so that Theorem \ref{MainTheorem} holds. On the other hand, for any $s>1$, there exists a single-variable compactly supported function in the Gevrey class $G^s$ (see \cite{krantz2002primer}).
Hence, we can use two compactly supported disjoint radial bumps to construct a steady flow whose stream function belongs to the Gevrey class $G^s$ with $s>1$, but is neither radial nor a solution to a global semilinear elliptic equation. Therefore, in terms of regularity, analyticity might be the optimal assumption for velocity fields.  
\end{remark}

\begin{remark}
    The simple connectedness of the domain is necessary. In Section \ref{sec: counterexample}, a counterexample is provided to illustrate that in a multiply connected domain, there exists a flow that can be neither described by a solution of global semilinear equation for the stream function nor a radial flow. 
\end{remark}

\begin{remark}\label{convex mirror}
    As an application of Theorem \ref{MainTheorem}, we can show that in a simply connected convex bounded domain, any analytic steady state whose vorticity is single-signed should inherit the mirror symmetry of the domain. In particular, any analytic steady flow with distinguished-signed vorticity in the disk is a radial flow. For more details on this property, one may refer to the Appendix \ref{application appendix}.
\end{remark}

\begin{remark}
To better understand the long-time behavior of unsteady flow, it would be desirable to establish a similar result in a broader class of steady states. An important fact that could help us to get closer is that \emph{every} steady state with $\psi\in C^{2,\alpha}$ has analytic Lagrangian trajectories \cite{chemin1992regularite,serfati1995structures, constantin2015analyticity}. In fact, this can be extended to all steady states with merely bounded vorticity. The main idea is that steady Euler solutions with bounded vorticity actually have some stratified regularity away from the critical points, as in the case of regular vortex patches \cite{serfati1994preuve, chemin1993persistance, bertozzi1993global}. We will present this result in a forthcoming paper.
\end{remark}

\subsection{Main Ideas}
We now move on to discuss some of the main ideas that go into the proof of Theorem \ref{MainTheorem}.
As we have mentioned above, the proof of Theorem \ref{MainTheorem} is based on a number of observations about the Poisson bracket, the zero sets of analytic functions, and over-determined elliptic problems. To clarify the main points of the argument, we will briefly sketch some of the main ideas here. 

\subsubsection{Some observations about $\{f,g\}=0$}\label{section1.4.1}
It is well known, using a straightening lemma, that if $f,g\in C^1$ and $\{f,g\}=0$ in a neighborhood of some $x_*\in\mathbb{R}^2$, while $\nabla f(x_*)\not=0,$ then we can write $g=F(f)$ for some $F\in C^1$ in a sufficiently small neighborhood of $x_*.$ This is a simple application of the inverse function theorem. It is also clear that this need not be the case globally. There are two important examples one should keep in mind in this regard, both of which we give in $\mathbb{T}^2$ (although the examples can be easily modified to work on the unit disk, for example).


\noindent \emph{Example 1: Non-Analytic $g$.} Take $f(x,y)=\sin(x)\sin(y).$ In this case, each \emph{regular} level set of $f$ has two connected components consisting of simple closed curves. We may take $g$ to be a bump function that is constant along the level sets of $f$ restricted to the first quadrant and also compactly supported inside the first quadrant. In this case, $g$ cannot be expressed as a function of $f$ globally on $\mathbb{T}^2$ (since $f$ takes the same values in the first and third quadrants, but $g$ takes different values there). We give two illustrative figures as in Figure \ref{fig:1}.
\begin{center}
\begin{figure}
\includegraphics[width=6cm, height=6cm]{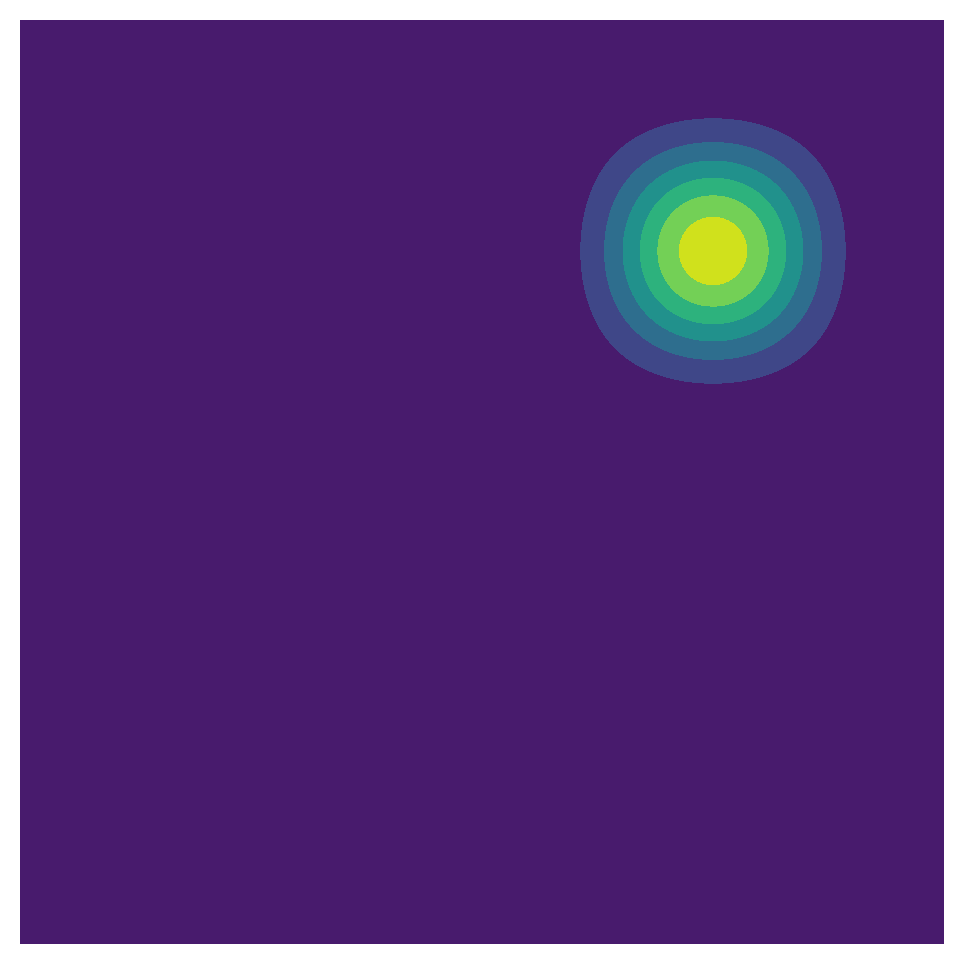}  
\includegraphics[width=6cm, height=6cm]{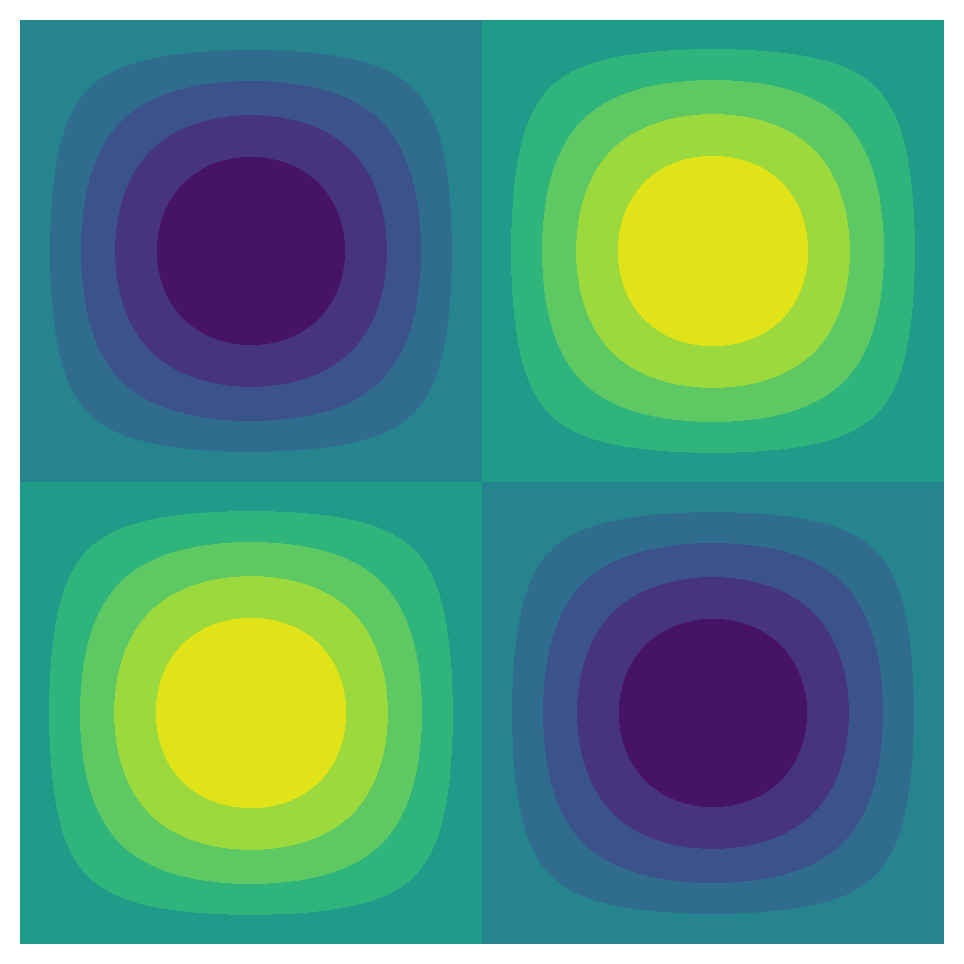}  
\captionof{figure}{2D Contour plot of $g$ and $f$ in Example 1}
    \label{fig:1}
\end{figure}
\end{center}


\noindent \emph{Example 2: A wall of critical points.} Take $f=\sin^2(x)\sin^2(y)$ and $g=\sin(x)\sin(y),$ then $g$ cannot be written as $F(f).$ In this case, $f$ and $g$ are both analytic; however, the set of critical points of $f$ partitions the domain into multiple cells surrounded by walls of critical points. We give two illustrative figures in Figure \ref{fig:2}, where the red lines are the set of critical points of $f$.

\begin{figure}
\centering
\includegraphics[height=6cm, width=6cm]{plotsin.png} 
\includegraphics[height=6cm, width=6cm]{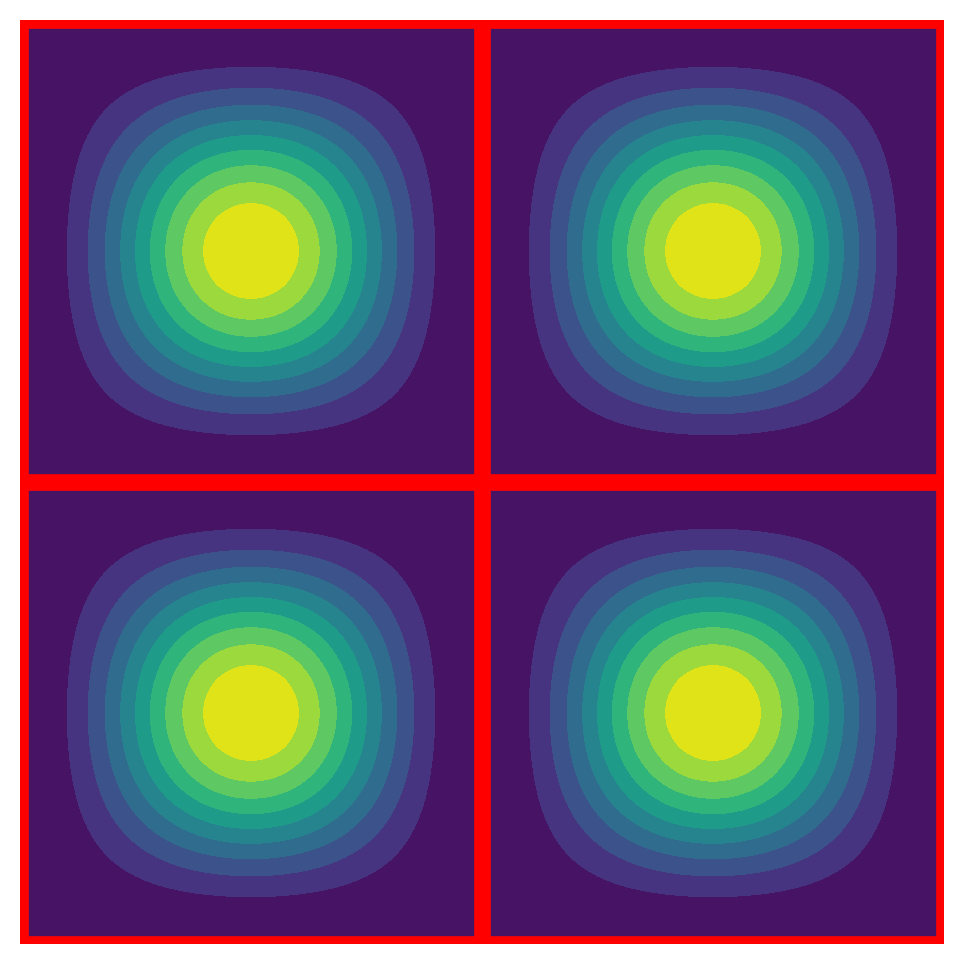}
\captionof{figure}{2D contour plot of $g$ and $f$ in Example 2}
\label{fig:2}
\end{figure}

It turns out that these examples encapsulate the only two obstructions to a global functional relation between $f$ and $g.$ We now state a simple consequence of our key proposition in this regard. 

\begin{lemma}\label{lemma:function relation bracket case}
Assume that $f,g$ are analytic functions on some open, connected domain $\Omega.$ Assume that $\{f,g\}=0$ and that the critical points of $f$ of even order are isolated. Then, there exists a continuous function $F:\mathbb{R}\rightarrow\mathbb{R}$ such that \[g=F(f).\]
\end{lemma}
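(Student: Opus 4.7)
My plan is to construct $F$ locally at each regular point of $f$ by straightening the level sets of $f$, and then to glue the local pieces into a single continuous $F$ on all of $\Omega$ using the structural constraints on the critical set $Z=\{\nabla f=0\}$ imposed by the hypothesis. The assumption that even-order critical points of $f$ are isolated is there precisely to exclude the ``wall'' obstruction of Example~2 --- a one-dimensional component of $Z$ along which $f-c$ does not change sign, along which $g$ would be free to take distinct values on disjoint components of the same level set of $f$.

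\textbf{Local step and structure of the critical set.} First I would note that at any point $x_\ast$ with $\nabla f(x_\ast)\ne 0$, the implicit function theorem provides local coordinates $(\eta_1,\eta_2)$ in which $f=\eta_1$, reducing $\{f,g\}=0$ to $\partial_{\eta_2}g=0$ and yielding $g=F_{\mathrm{loc}}(f)$ for some analytic $F_{\mathrm{loc}}$ near $x_\ast$. Assuming $f$ is non-constant (the constant case being trivial), $Z$ is a proper analytic subvariety, locally a finite union of analytic arcs together with isolated points; by hypothesis the arcs consist solely of odd-order critical points, while the even-order points form a discrete set $Z_{\mathrm{even}}$. Along an odd-order arc $\gamma$, using local coordinates $(s,t)$ adapted to $\gamma=\{t=0\}$, I would write $f-f|_\gamma = t^k h(s,t)$ with $h(s,0)\ne 0$ and $k$ odd; the odd exponent $k$ forces $f-f|_\gamma$ to change sign transversally across $\gamma$, which is the mechanism that will allow the local relations on the two sides of $\gamma$ to be matched continuously through $\gamma$.

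\textbf{Gluing, finishing, and main obstacle.} With these ingredients in place, I would assemble the local $F_{\mathrm{loc}}$'s into a single continuous $F$ defined on $f(\Omega\setminus Z_{\mathrm{even}})$ by patching across each odd-order arc using the sign-change property above; this is where the key proposition alluded to earlier in the paper does the real work. Continuity of $f$ and $g$, together with the discreteness of $Z_{\mathrm{even}}$, would then extend $F$ continuously to $f(\Omega)$, after which $F$ may be extended arbitrarily (say by constants) outside $f(\Omega)$ to obtain a continuous $F:\mathbb{R}\to\mathbb{R}$ satisfying $g=F(f)$ on $\Omega$. The hard part will be this patching step: verifying that whenever two regular points $x,y\in\Omega$ satisfy $f(x)=f(y)$ one has $g(x)=g(y)$. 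This is a connectivity statement about the ``level-set graph'' of $f$ modulo $Z_{\mathrm{even}}$, and must be proved by a careful analytic-continuation argument through the odd-order portion of $Z$, leveraging both analyticity and the sign-change property established in the second paragraph.
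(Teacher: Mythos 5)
Your proposal takes essentially the same route as the paper. The paper does not give a separate proof of this lemma but states it as ``a simple consequence of our key proposition'' (Proposition~\ref{KeyProposition}), which in turn rests on the Puiseux-series straightening lemma (Lemma~\ref{lemma:Puiseux inverse}); your local straightening at regular points, your observation that odd-order vanishing forces $f-f|_\gamma$ to change sign (equivalently, $f$ is strictly monotone along a transversal, which is exactly what makes the Puiseux inversion work), and your reduction of the gluing step to the statement ``$f(x)=f(y)\Rightarrow g(x)=g(y)$ along any path avoiding the discrete set $\mathcal{C}_e(f)$'' are precisely the ingredients and the reduction the paper uses.
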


\noindent In particular, in order that there does not exist a global $F$, $f$ must have ``walls'' of critical points of even order (see Definition \ref{Degree} for the precise meaning of critical point of even order).
\subsubsection{Properties of the set $\{\nabla\psi=0\}$ when $\{\psi,\Delta\psi\}=0$} 
Given the preceding discussion, the obstruction to having a global relation between $\Delta\psi$ and $\psi$ in a 2D Euler steady state is the existence of a curve of critical points of $\psi$. Since $\psi$ is analytic, we can consider the innermost such curve, $\Gamma$. {In other words, in the open region enclosed by $\Gamma$ (possibly with the aid of a portion of the boundary $\partial\Omega$),   called $\Omega_\Gamma,$ $\psi$ has only isolated critical points. In $\Omega_\Gamma$, $\psi$ will satisfy a semilinear elliptic equation with nonlinearity $F$ and the full gradient of $\psi$ vanishes on $\Gamma$.} Note that the curve $\Gamma$ may have singularities. {In fact, if $\psi$ cannot be written as a solution of \eqref{SemilinearElliptic} globally, one of the following three cases must happen:}

\begin{enumerate} \item [(i)] $\psi$ vanishes higher than quadratically on $\Gamma$;
 \item [(ii)] $\psi$ vanishes only quadratically (somewhere) on $\Gamma$ and the associated nonlinearity $F$ is not analytic at  $c=\psi(\Gamma)$;
 \item [(iii)] $\psi$ vanishes only quadratically (somewhere) on $\Gamma$ with $F$ analytic at $c=\psi(\Gamma).$
 \end{enumerate}
  In both cases (i) and (ii), because $F$ is necessarily non-analytic at $c=\psi(\Gamma)$, it turns out that $\psi-c$ is necessarily {single-signed} in $\Omega_{\Gamma}$--this is a key aspect of the argument. In case (i), we can apply a known result of Brock \cite{brock2000continuous} to deduce that $\psi$ must be radial; a nice feature of Brock's result \cite{brock2000continuous} is that it does not require the boundary of the domain to be smooth. The analysis in case (ii) is more difficult and will be discussed in more detail below.   We give an illustration depicting the potential complexity of the level-set in Figure \ref{Figure3}\footnote{The solid curves on the figure above are the set of critical points of $\psi$ in an elliptical domain. The boundary of the region $A$ demonstrates situation (i), where all points are critical points of degree higher than 2. The boundary of the region $B$ demonstrates the second case (ii), in the case where all points are quadratic. The regions $C$ and $D$ could represent the third case, where $F$ could be extended from $C$ to $D$, while $D$ falls into case (i) or (ii).}. 
\begin{center}
    \includegraphics[width=0.39\linewidth]{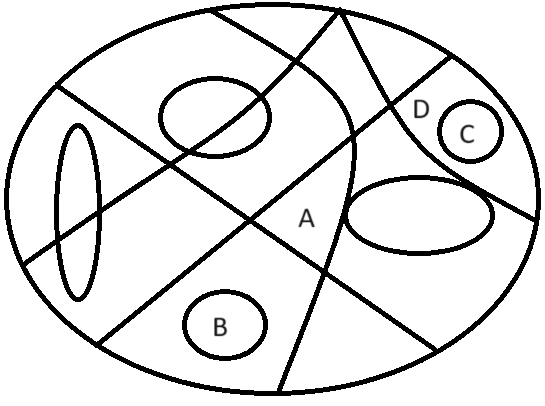}
    \captionof{figure}{Illustrations for a potential critical set of $\psi$}\label{Figure3}
\end{center} 

One snag in case (ii) is that we have to show that this curve $\Gamma$ is actually smooth and that $\psi$ is single-signed in its interior. 
For general analytic functions, it is well known that the zero set may be quite complicated and may contain multiple singular points of different types \cite{krantz2002primer,buffoni2003analytic}, even more so in high dimensions. Note, for example, that the set of critical points in Example 2 in Section \ref{section1.4.1} has a singularity at the origin. In Figure \ref{Figure3}, we have to show that case (ii) above actually corresponds to a region like $B$ in the figure (i.e., a region with a smooth boundary). A key observation here is that, when we have $\{\psi,\Delta\psi\}=0,$ the degree of vanishing of $\nabla\psi$ on each connected component of $\Gamma$ will necessarily be \emph{constant}. This is established by using the constancy of $\Delta\psi$ in the levels of $\psi$. Once the constancy of the degree is established, we use an elementary fact: singularities in the zero set of an analytic function occur \emph{only} at points where the degree of vanishing of the function changes. The constant degree of vanishing thus implies that $\Gamma$ is smooth.

{In the  case (iii), as $F$ is analytic in the boundary $\partial \Omega_{\Gamma}$, we can find an extension of $F$, such that $\psi$ would satisfy $\Delta \psi= F(\psi)$ in a domain containing $\Omega_{\Gamma}$. In practice, we do not work with case (iii) directly.  Instead, based on the discussions on the case (i) and case (ii), we show if $\psi$ is neither radial nor a global solution of \eqref{SemilinearElliptic}, then $\psi$ would have a nested family of curves of critical points. It then leads to a contradiction. }
\subsubsection{Moving Plane Arguments}
For case (ii) of the previous subsection, we see that if a steady state $\psi$ does not satisfy the global relation $\Delta\psi=F(\psi)$, up to subtracting a constant, we end up with a simply connected domain $\Omega_\Gamma$ with analytic boundary where \[\Delta\psi=F(\psi)\quad\text{in}\,\, \Omega_{\Gamma},\]  \[\psi=\nabla\psi=0 \quad \text{on}\,\, \partial\Omega_{\Gamma}.\]  We may also assume that $F(s)\approx \sqrt{s}$ as $s\rightarrow 0.$ As mentioned above, it is not difficult to show that $\psi$ cannot change sign in $\Omega_\Gamma$. By scaling, we can assume that $0\leq\psi\leq 1.$ Now, the overdetermined problem above seems similar to those considered by Serrin in his landmark paper \cite{serrin1971symmetry}. Unfortunately, a key difference here is that we do not have the Lipschitz smoothness of the nonlinearity $F,$ which is important for the moving plane method used in \cite{serrin1971symmetry}. Indeed, as we have already said, near $s=0,$ $F(s)\approx \sqrt{s}$; moreover,  $F$ need not be smooth as $s\rightarrow 1$ (the maximum value of $\psi$) either. To deal with the singularity as $s\rightarrow 0,$ we use the fact that we have a lower bound on the growth of $\psi$ in the direction normal to the boundary, since the vorticity is nonzero (and constant) on the boundary. Then we invoke a refinement of the Hopf lemma for elliptic problems on small domains under some growth conditions on the coefficients. This effectively deals with the singularity as $s\rightarrow 0$. Remarkably, it turns out that the potential singularity as $s\rightarrow 1$ always has the right sign for the maximum principle (see the proof of Lemma \ref{claim:coefficient estimate}). 

\subsection{Organization of the Paper}
In Section \ref{Bracket}, we establish some general results on analytic functions satisfying $\{f,g\}=0$ and recall a few results on zero sets of analytic functions. The main consequence of this is that, if an analytic Euler solution $\psi$ does not satisfy an equation of the type \eqref{SemilinearElliptic}, we are ensured the existence of a level set of $\psi$ on which the gradient vanishes. In Section \ref{LevelSet}, we prove a few results about these sets, using more fundamentally that $\psi$ is an Euler solution. In particular, we show the analyticity of the non-isolated portions of the critical points with quadratic vanishing. This sets us up with an overdetermined elliptic problem in a subdomain of the original domain. At the end of Section \ref{LevelSet}, we then establish the rigidity of solutions in a domain whose boundary is a curve of critical points with vanishing greater than quadratic. In Section \ref{Rigidity}, we {first} establish the rigidity of solutions with a suitably chosen curve of critical points with quadratic vanishing, which is based on certain regularity properties of the function $F$ (defined in a suitable subdomain), and the moving plane method. Then, we finish the proof of Theorem \ref{MainTheorem} in Section \ref{section:finish proof} via a contradiction argument. In Section \ref{sec: counterexample}, we give a counterexample to Theorem \ref{MainTheorem} in an analytic non-simply connected bounded domain, hence showing the optimality of the statement.

\section{Facts about $\{f,g\}=0$} \label{Bracket}

The purpose of this section is to establish Proposition \ref{KeyProposition} as well as to recall some basic facts about the zero sets of analytic functions. 

\subsection{Key Facts}

We start by giving a notation for the degree of vanishing of an analytic function at a point. 

\begin{definition}\label{Degree}
For any non-constant analytic function $f:\Omega\rightarrow\mathbb{R},$ and $x_*\in\Omega$, $D^k f(x_*)$ denotes the tensor for the $k$-th order derivatives of $f$ at $x_*$, and we define $d_{f}(x_*)$ to be the first $k\geq 1$ for which $D^k f(x_*)\not=0$. 
\end{definition}

We then introduce the notations for the set of critical points of $f$ and those with even degree of  vanishing:
\[\mathcal{C}(f)=\{x\in\Omega: d_f(x)>1\},\qquad \mathcal{C}_{e}(f)=\{x\in\Omega:  d_{f}(x)\in 2\mathbb{N}\},\]

\noindent and state the key proposition. 

\begin{proposition}\label{KeyProposition}
Assume that $\{f,g\}=0$ in some open connected set $\Omega\subset\mathbb{R}^2.$ Suppose that {$\Gamma_0$} is an analytic curve connecting $x,y\in\Omega$ and that {$\Gamma_0\cap\mathcal{C}_{e}(f)=\emptyset$. If $f(x)=f(y)$, then it must hold that $g(x)=g(y).$}
\end{proposition}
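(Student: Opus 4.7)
The plan is to parametrize $\Gamma$ analytically as $\gamma : [0,1] \to \Omega$ with $\gamma(0) = x$ and $\gamma(1) = y$, and reduce the claim to a one-dimensional statement about the analytic functions $\phi := f \circ \gamma$ and $h := g \circ \gamma$ on $[0,1]$: namely, show that $\phi(0) = \phi(1)$ forces $h(0) = h(1)$. I would split into two cases depending on whether $\gamma$ lies entirely in $\mathcal{C}(f)$.

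In the degenerate case $\gamma([0,1]) \subset \mathcal{C}(f)$, the function $\phi$ is automatically constant, and the task is to show $h$ is also constant on $\Gamma$. At a generic point $p \in \Gamma$ I would pick analytic coordinates $(s_1, s_2)$ adapted so that $\Gamma = \{s_1 = 0\}$ locally; iterating the identity $\nabla f \equiv 0$ along $\Gamma$ yields a factorization $f - f(p) = s_1^k R(s_1, s_2)$ with $R(0, \cdot) \not\equiv 0$ and $k = d_f(p) \geq 3$ odd, by the hypothesis $\Gamma \cap \mathcal{C}_e(f) = \emptyset$. Expanding $\{f, g\} = 0$ in these coordinates and extracting the leading $s_1^{k-1}$ coefficient gives $k R(0, s_2)\, \partial_{s_2} g(0, s_2) = 0$, hence $\partial_{s_2} g \equiv 0$ along $\Gamma$ near $p$; density of generic points and connectedness of $\Gamma$ then imply $g \equiv \text{const}$ on $\Gamma$.

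In the generic case $\gamma([0,1]) \not\subset \mathcal{C}(f)$, the set $T := \gamma^{-1}(\mathcal{C}(f))$ is the discrete zero set of the non-identically-vanishing real-analytic function $|\nabla f \circ \gamma|^2$ on $[0,1]$, so $T = \{t_1 < \cdots < t_n\}$ is finite. On each open subinterval $(t_i, t_{i+1})$, $\nabla f(\gamma(t)) \neq 0$, so the standard straightening lemma gives locally $g = F(f)$ with $F$ analytic; these local branches patch together by analytic continuation along $\gamma$ into a single analytic function $F_i$ defined on $\phi((t_i, t_{i+1}))$ satisfying $h = F_i \circ \phi$, and by continuity $F_i$ extends to the endpoints with limiting values $h(t_i)$ and $h(t_{i+1})$.

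The crux, and the main obstacle I anticipate, is gluing the $F_i$'s into a single continuous $F : \phi([0,1]) \to \mathbb{R}$ with $h = F \circ \phi$ globally---equivalently, showing that $\phi(s) = \phi(t)$ always implies $h(s) = h(t)$. Propagation across each critical time $t_i$ will be handled by a local analysis at $p_i := \gamma(t_i)$ using the structure of the level sets of $f$ near an odd-order critical point: expanding $\phi(t) - \phi(t_i) \sim (t - t_i)^{k_i} P_{k_i}(\gamma'(t_i))$ to leading order, the oddness of $k_i = d_f(p_i)$ forces $\phi$ to change sign across $t_i$ in the generic sub-case $P_{k_i}(\gamma'(t_i)) \neq 0$, so the intervals $\phi((t_{i-1}, t_i))$ and $\phi((t_i, t_{i+1}))$ meet only at $\phi(t_i)$ and $F_{i-1}, F_i$ concatenate continuously. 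In the degenerate sub-cases (where $\gamma$ is tangent to a branch of the critical level set, or the leading tangential order along $\gamma$ is even) the ranges overlap, and one must combine the Weierstrass-preparation / Newton-polygon description of the branches of $\{f = c\}$ for $c$ near $f(p_i)$ with the fact that $\{f, g\} = 0$ forces $g$ to be constant on each analytic component of the regular level set, plus continuity of $g$ at $p_i$, to match the $F_i$ values. This is precisely where the parity of $k_i$ enters: at an even-order critical point the local components of nearby level sets remain separated and can support independent values of $g$ (cf.\ Example 2), whereas odd order rules this out. Once the $F_i$'s are glued, applying $\phi(0) = \phi(1)$ immediately yields $h(0) = F(\phi(0)) = F(\phi(1)) = h(1)$, completing the proof.
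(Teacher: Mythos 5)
Your treatment of the degenerate case ($\Gamma\subset\mathcal{C}(f)$) is correct and genuinely different from the paper's: putting $\Gamma=\{s_1=0\}$ in local coordinates, writing $f-c=s_1^kR$ with $R(0,\cdot)\not\equiv 0$ at points where the degree along $\Gamma$ attains its minimum $k$ (necessarily $\geq 3$ and odd by hypothesis), and extracting the order-$(k-1)$ coefficient of $\{f,g\}=0$ gives $kR(0,s_2)\,\partial_{s_2}g(0,s_2)=0$, hence $g$ is constant along $\Gamma$. This bypasses Lemma \ref{lemma:Puiseux inverse} entirely for this case and is arguably more transparent than the paper's treatment (which derives it as a byproduct of the main local lemma).

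The generic case, however, has a real gap exactly where you anticipated: the gluing across the critical times $t_i$. Your "generic sub-case" claim---that the local sign change of $\phi$ at $t_i$ makes $\phi((t_{i-1},t_i))$ and $\phi((t_i,t_{i+1}))$ meet only at $\phi(t_i)$---is false as stated: $\phi=f\circ\gamma$ need not be monotone on the subintervals (it has interior extrema wherever $\gamma'$ is tangent to a level curve of $f$), so those two ranges can overlap substantially even though $\phi$ changes sign in a one-sided neighborhood of $t_i$. Moreover the "degenerate sub-cases" are left as a vague Newton-polygon sketch, although that is exactly where the odd-degree hypothesis has to do its work. The paper avoids both problems by proving a stronger, genuinely two-dimensional local statement (Lemma \ref{lemma:Puiseux inverse}): when $d_f(x_*)$ is odd, there is a Puiseux function $F$ with $g=F(f)$ on a full planar neighborhood of $x_*$, not merely along $\Gamma$. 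Oddness makes $t\mapsto f(t,0)$ strictly monotone and Puiseux-invertible on a line through $x_*$, which defines $F$; the identity $g=F(f)$ is then propagated off that line segment by segment via a one-variable unique-continuation argument. With that lemma in hand, the proposition follows by covering the compact curve $\Gamma$ by finitely many such $2$D neighborhoods and patching: since $f|_\Gamma$ is analytic and non-constant, consecutive overlaps map onto nontrivial intervals of $f$-values, so the local Puiseux $F$'s agree on full overlaps by unique continuation of Puiseux series. Your $1$D strategy of concatenating along $\gamma$ does not see this $2$D Puiseux relation and would have to rediscover it at each $t_i$; I recommend first establishing the $2$D local lemma, after which your gluing becomes a routine compactness argument.
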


{Note that Lemma \ref{lemma:function relation bracket case} is a direct corollary of Proposition \ref{KeyProposition}.} To prove this proposition, let us first establish a generalization of the well-known straightening lemma. 

\begin{lemma}\label{lemma:Puiseux inverse}
Assume that $f,g$ are analytic and $\{f,g\}=0$ in a neighborhood of a point $x_*\in\Omega.$ Assume that $d:=d_f(x_*)\not\in 2\mathbb{N}.$ Then, there exists a continuous function $F$ so that $g=F(f)$ in a neighborhood of $x_*.$ In fact, $F(s)$ is analytic in the variable $(s-f(x_*))^{1/d}$.
\end{lemma}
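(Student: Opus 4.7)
The plan is to adapt the classical straightening lemma (which handles $d = 1$ via the inverse function theorem) to odd $d$, leveraging the crucial fact that $t \mapsto t^d$ is strictly monotonic on $\mathbb{R}$ when $d$ is odd, and therefore admits a real-valued inverse expressible as a Puiseux series.

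First, I would reduce to $x_* = 0$ and $f(0) = 0$. Since the homogeneous part $f_d$ of $f$ at $0$ is a nontrivial polynomial of degree $d$, some direction is not in its zero locus; after a linear change of coordinates placing this direction along the $x$-axis, a rescaling (using that $c^{1/d}$ is real when $d$ is odd), and finally a Puiseux-type analytic change of the $x$-coordinate, one may normalize to $f(x, 0) = x^d$. The restriction $f|_{y = 0}$ is then the bijection $x \mapsto x^d$, whose real inverse is $\phi(s) = s^{1/d}$. Defining $F(s) := g(s^{1/d}, 0)$ and expanding $g(x, 0) = \sum_{k \geq 0} b_k x^k$ yields $F(s) = \sum_{k \geq 0} b_k s^{k/d}$, a convergent Puiseux series in $s^{1/d}$, as required.

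The main task is then to show $g(x, y) = F(f(x, y))$ in a full neighborhood of $0$. For any $x_0 \neq 0$, $\partial_x f(x_0, 0) = d x_0^{d - 1} \neq 0$, so the $x$-axis is non-characteristic for the first-order linear PDE $\{f, h\} = 0$. Since $F$ is analytic on $\mathbb{R} \setminus \{0\}$ and $f(x_0, 0) = x_0^d \neq 0$, the composition $F \circ f$ is analytic in a neighborhood of $(x_0, 0)$ and satisfies $\{f, F \circ f\} = F'(f)\{f, f\} = 0$. Both $g$ and $F \circ f$ solve the PDE with the same Cauchy data on the $x$-axis, so Cauchy--Kovalevskaya uniqueness yields $g = F(f)$ in a neighborhood of each such point, hence in an open set $U$ containing a punctured $x$-axis neighborhood of $0$.

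The final step, and the main obstacle, is to extend this identity to a full neighborhood of $0$. A priori, the Puiseux expression $F(f) = \sum b_k f^{k/d}$ may have fractional-power singularities along $\{f = 0\}$, whereas $g$ is analytic across this set. The equality $g = F(f)$ on the open set $U$ combined with the analyticity of $g$ near any regular point of $\{f = 0\}$ forces the coefficients $b_k$ for $k$ not a multiple of $d$ to vanish (unless exceptional structure, such as $f$ admitting an analytic $d$-th root, makes the fractional powers $f^{k/d}$ analytic on their own). In either scenario, $F \circ f$ is in fact an analytic function on a full neighborhood of $0$, and the identity $g = F(f)$ on $U$ extends to the entire neighborhood by unique analytic continuation.
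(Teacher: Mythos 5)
Your setup — normalize, define $F$ by inverting $f$ along the $x$-axis, and use local uniqueness (Cauchy--Kovalevskaya or the $d=1$ straightening lemma) at the non-critical points $(x_0,0)$, $x_0\neq 0$, to get $g=F(f)$ on a neighborhood $U$ of the punctured axis — matches the paper's strategy. But the final step, extending from $U$ to a full neighborhood of $0$, has a genuine gap: you assert a dichotomy (either $b_k=0$ for $d\nmid k$, or "exceptional structure such as $f$ admitting an analytic $d$-th root") and conclude "in either scenario $F\circ f$ is analytic," but you never prove that this dichotomy exhausts all cases, and in fact, as stated, it does not. When $\{f=0\}\setminus\{0\}$ consists entirely of critical points (which is exactly the situation where the first alternative can fail), $f$ can factor near such a curve as $f=v^m h$ with $m\geq 2$ and $h\neq 0$, and then $F(f)=\sum b_k v^{mk/d}h^{k/d}$; whether this is analytic depends on the divisibility of $mk$ by $d$ for each arc of $\{f=0\}$, and can vary from arc to arc. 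There is no reason this must reduce to "$f$ has an analytic $d$-th root." Moreover, even in the favorable case, your argument is borderline circular: the only available evidence that $F\circ f$ is analytic in the full neighborhood is the identity $g=F(f)$, which is precisely what you are trying to extend beyond $U$.

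What is actually needed is a propagation argument across the arcs of $\{f=0\}$: one must show that, approaching a zero of $f$ from the side where $g=F(f)$ is already established, the Puiseux expansion of $F(f)$ there is forced (by agreement with the Taylor expansion of the analytic function $g$) to contain only integer powers, hence extends analytically across the arc, so that unique continuation carries $g=F(f)$ into the next sector. The paper implements exactly this, but along one-dimensional vertical segments $t\mapsto (x_1,t)$ with $x_1\neq 0$ fixed: $g(x_1,t)$ is analytic in $t$; $F(f(x_1,t))$ agrees with it near $t=0$ and is analytic except at the discrete zeros of $f(x_1,\cdot)$, where it has a convergent Puiseux series; matching with the Taylor series of $g$ kills the fractional powers, so the identity passes through each zero and reaches $(x_1,x_2)$. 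This avoids any global claim about the structure of $\{f=0\}$ or about $F$ being analytic in $s$. You should replace your asserted dichotomy with this continuation-and-matching step (either along segments, as in the paper, or near generic points of each arc of $\{f=0\}$).

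Two smaller remarks: the normalization $f(x,0)=x^d$ via an analytic change of the $x$-coordinate is valid (this uses $d$ odd to take a real $d$-th root) but is not needed — it suffices, as the paper does, to note that $f(t,0)$ is strictly monotone on a punctured interval and invert it directly via a Puiseux series. And invoking Cauchy--Kovalevskaya is heavier machinery than necessary; the classical $C^1$ straightening lemma at non-critical points gives the same conclusion.
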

\begin{remark}
The case $d=1$, which is that $\nabla f(x_*)\not=0,$ is the standard straightening lemma, which is valid under the only assumptions that $f$ and $g$ are $C^1$ in a neighborhood of $x_*.$
\end{remark}
\begin{remark}
Strictly speaking, we do not need the generalization to $d>1$ for the results proven later; however, it is conceptually important in order to understand the difference between the ``extendable'' and ``unextendable'' domains later. The result may also be of independent interest. It should be noted that, in this generalization, analyticity plays a key role, with the statement not holding in the smooth class.
\end{remark}

\begin{proof}[Proof of Lemma \ref{lemma:Puiseux inverse}]
Without loss of generality, by applying suitable translations and a rotation, set $x_*=0$, $f(x_*)=0,$ and $\partial_{1}^d f(0)>0.$ Assume that $d$ is odd. The point is that  $\partial_1 f(t,0)>0$ for $t\in [-\epsilon,0)\cup (0,\epsilon]$ for some $\epsilon>0$. In particular, the function $f(t,0)$ is real-analytic and invertible in $(-\epsilon,\epsilon)$. It follows that we can express its inverse using a Puiseux series for $t$ sufficiently small. In particular, there exists (a unique) $F$ that is analytic in the variable $s^{1/d}$ for which 
\[
g(t,0)=F(f(t,0)),\,\, \text{for every}\,\, t\in (-\frac{\epsilon}{2},\frac{\epsilon}{2}).
\]

Let $\kappa$ be the radius of convergence of the Puiseux series of $F$ at $0.$ We now take $\delta
<\frac{\epsilon}{2}$ be so small that $f(B_\delta(0))\subset (-\kappa,\kappa).$
We will show that 
\[
g(x)=F(f(x)),\,\,\text{for every}\,\, x\in B_\delta(0).
\]
Fix any $x\in B_{\delta}(0).$ First, take $x_1\in (-\delta,\delta)\setminus\{0\}$. By assumption, $(x_1,0)$ is not a critical point of $f$. It follows that there is a real analytic $G$ so that $g(z)=G(f(z)),$ for every $z$ in a neighborhood of $(x_1,0).$ In particular, there is an $\epsilon(x_1)>0$ such that for all $t\in (x_1-\epsilon(x_1),x_1+\epsilon(x_1)),$ we have
\[G(f(t,0))=F(f(t,0)).\] However, $f$ is monotone in that region. Therefore, $G=F$ in a neighborhood of $f(x_1,0).$ By analyticity, this means that $F\equiv G$ on their domains. 

At this point, we have shown that there is a ball around each point of the segment $\{(t,0):t\in(-\epsilon,\epsilon)\setminus\{0\}\}$ where $g=F(f).$ Now we will extend this to the whole ball $B_\delta(0).$ Fix any $x\in B_{\delta}(0)$ with $x_1\neq 0$. Without loss of generality, assume $x_2>0.$ Now consider the function $g(x_1,t),$ where $t\in [0,x_2].$ This is an analytic function of $t$, since $g$ is an analytic. Now, $F(f(x_1,t))$ is analytic and equal to $g$ in a neighborhood of $t=0$. Moreover, $F(f(x_1,t))$ is also analytic except at the possibly discrete set of points where $f(x_1,t)=0$, where $F(f(x_1,t))$ has a convergent Puiseux series (note that $f(x_1,0)\not=0$). It follows that $F(f(x_1,t))$ is actually analytic on $[0,x_2]$ and that $g(x_1,t)=F(f(x_1,t))$ for every $t\in [0,x_2].$ In particular, $g(x_1,x_2)=F(f(x_1,x_2)).$ 
\end{proof}

We are now in a position to prove Proposition \ref{KeyProposition}.

\begin{proof}[Proof of Proposition \ref{KeyProposition}]
 For any point $\tilde{x}\in \Gamma_0$, by applying Lemma \ref{lemma:Puiseux inverse} to $f$ and $g$, there are a neighborhood $U_{\tilde{x}}$ and a Puiseux function $F_{\tilde{x}}$ such that $g=F(f)$ in $U_{\tilde{x}}$. As the collection of the open set $U_{\tilde{x}}$ forms an open cover for the compact set $\Gamma_0$, we can choose $U_{x^1},\cdots, U_{x^m}$, such that $U_{x^i}\cap U_{x^{i+1}}\neq \emptyset$ and $\Gamma_0\subset \cup_{i=1}^{m}U_{x^i}$.
 In the case where $f$ is constant in $\Gamma_0$, as the pair $(g,f)$ satisfies a functional relationship in each $U_{x^i}$, $g$ is  constant in $U_{x^i}\cap \Gamma_0$. Notice that $\{U_{x^i}\}$ is a finite open cover for $\Gamma_0$, therefore, $g$ is constant in $\Gamma_0$. 
 
  In the case where $f$ is not constant in $\Gamma_0$, we claim that $F_{x^1}$ can be extended to be a Puiseux function $F$ in the range of $f(\Gamma_0)$ with $g=F(f)$ in $\Gamma_0$.   
 We first prove that $F_{x^1}$ can be extended to a Puiseux function $F$ in the range $f(U_{x^1}\cap \Gamma_0)\cup f(U_{x^2}\cap \Gamma_0)$, with $g=F(f)$ in $\Gamma_0 \cap (U_{x^{1}}\cup U_{x^{2}})$.
Here $F_{x^1}$ and $F_{x^2}$ are two Puiseux functions respectively in the range of $f(U_{x^1}\cap \Gamma_0)$ and $f(U_{x^2}\cap \Gamma_0)$, and the values of  $F_{x^1}$ and $F_{x^2}$ agree in $f(\Gamma_0 \cap  U_{x^1} \cap U_{x^2})$. As $f$ is not constant in $\Gamma_0$, the analytic assumption of $f$ and $\Gamma_0$ implies that $f(\Gamma_0\cap U_{x^1}\cap U_{x^2})$ contains an open interval in $\mathbb{R}$. Now due to the fact that $f(\Gamma_0\cap U_{x^1})$ and $f(\Gamma_0\cap U_{x^2})$ are connected, the condition that $F_{x^1}$ and $F_{x^2}$ agree in $f(U_{x^1}\cap U_{x^2})$ implies that the values of $F_{x^1}$ and $F_{x^2}$ agree in $f(\Gamma_0 \cap U_{x^1})\cap f(\Gamma_0 \cap U_{x^2})$. Hence, we can extend $F_{x^1}$ with the aid of $F_{x^2}$ to a Puiseux function $F$ in $f(U_{x^1})\cup f(U_{x^1})$ and $g=F(f)$ in $\Gamma_0 \cap (U_x^{1}\cup U_x^{2})$. Iterate the above procedure to extend $F_{x^1}$ to a Puiseux function $F$ in the range of $f(\Gamma_0)$ such that $g=F(f)$ in $\Gamma_0$. Hence the proof of the proposition is completed.
\end{proof}

\subsection{Zero Sets of Analytic Functions}
In this section, we will briefly discuss the structure of the zero set of a real analytic function in two variables. Let us first recall a classical result concerning the structure of the zero set of a real analytic function.
\begin{proposition}[See Corollary 2 in \cite{sullivan2006combinatorial}]\label{structure of nodal set} 
   Let $f$ be an analytic function near $x^0$  and $f(x^0)=0$, then one of the following three things must happen.
   \begin{enumerate}
        \item $x^0$ is an isolated point in the zero set for $f$.
       \item  $f\equiv 0$.
       \item The nodal set $\{f=0\}$ near $x^0$ is a collection of an even number of analytic curves that branch out at $x^0$. 
   \end{enumerate}
\end{proposition}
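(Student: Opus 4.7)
The plan is to reduce to a polynomial model via the Weierstrass preparation theorem and then read off the local geometry of the zero set from a Newton--Puiseux factorisation.

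First I would dispose of the easy case: if $f$ vanishes identically in any neighbourhood of $x^0$ we are in case (2). Otherwise I would translate $x^0$ to the origin and make a linear change of coordinates $(x,y)\mapsto (x,y+\lambda x)$ for generic $\lambda$ so that $f(0,y)\not\equiv 0$. Then $f$ is $y$-regular of some finite order $n\ge 1$ at $0$, and Weierstrass preparation yields
\[f(x,y)=u(x,y)\,P(x,y),\qquad P(x,y)=y^{n}+a_{n-1}(x)y^{n-1}+\cdots+a_{0}(x),\]
where $u$ is real analytic and nonvanishing near $0$ and each $a_j$ is real analytic near $0\in\mathbb{R}$ with $a_j(0)=0$. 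The zero set of $f$ near $0$ thus coincides with the zero set of the distinguished polynomial $P$, and $P$ is a polynomial in $y$ with real analytic coefficients in $x$.

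Next I would factor $P$ over the ring of convergent real power series into irreducibles $P=\prod_{\alpha}P_{\alpha}^{k_{\alpha}}$. For each real irreducible factor $P_{\alpha}$, viewed over $\mathbb{C}\{x\}$ it splits into a single Galois orbit of complex Puiseux branches $y=\phi(x^{1/m_{\alpha}})$, where the Galois action is generated by $x^{1/m_{\alpha}}\mapsto\zeta_{m_{\alpha}}x^{1/m_{\alpha}}$ and by complex conjugation. Two possibilities then arise. Either none of the branches in the orbit is real valued on any real interval about $0$, in which case the real zero set of $P_{\alpha}$ near $0$ is just $\{0\}$. Or some branch is real, in which case the parametrisation $t\in(-\delta,\delta)\mapsto (t^{m_{\alpha}},\phi(t))$ presents the real zero set of $P_{\alpha}$ as a single real analytic arc passing through $0$ (possibly with a cusp, but topologically an open interval); removing the origin from this arc leaves exactly two connected components.

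Assembling the real irreducible factors then gives the trichotomy. If every $P_{\alpha}$ is of the first type, the origin is isolated in $\{f=0\}$, giving case (1). Otherwise the zero set near $0$ is a finite union of real analytic arcs through $0$, each contributing exactly two half-branches emanating from the origin, so the total count of half-branches is even and we obtain case (3). The main technical obstacle is the real-to-complex descent: one must verify that each real irreducible factor of $P$ produces \emph{at most one} real analytic arc (rather than, say, a figure-eight configuration or several crossing arcs with a common tangent) and that the two half-branches it contributes really are distinct; both points follow from the Puiseux parametrisation together with the compatibility of the Galois action with complex conjugation, but making this completely rigorous — especially in the presence of high-order cusps where $m_{\alpha}>1$ — is where the careful bookkeeping lies.
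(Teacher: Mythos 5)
The paper supplies no proof of this proposition, citing Sullivan \cite{sullivan2006combinatorial} instead, so there is no in-paper argument to compare against; your Weierstrass--Puiseux route is the standard one and its overall structure is correct. Two details should be repaired. First, the shear $(x,y)\mapsto(x,y+\lambda x)$ leaves $f(0,y)$ unchanged and hence cannot produce $y$-regularity; you want $(x,y)\mapsto(x+\lambda y,y)$ or a generic rotation. Second, the parametrisation $t\mapsto(t^{m_{\alpha}},\phi(t))$ with $t$ real only sweeps out real points when the Puiseux series $\phi$ itself has real coefficients, and that can fail even when the branch is real-valued on some real interval: for $P_{\alpha}=y^2+x$ one has $\phi(s)=is$, and the real locus lies over $x<0$, captured only after replacing $t$ by $it$. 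In general $\bar\phi(s)=\phi(\zeta^{j}s)$ for some $j$, and one should rescale $t\mapsto\epsilon t$ with $\epsilon^{2}=\zeta^{j}$ (which forces $\epsilon^{m_{\alpha}}=\pm1$) so that $\phi(\epsilon t)$ becomes a real power series; then $t\mapsto(\pm t^{m_{\alpha}},\phi(\epsilon t))$, $t$ real, does sweep the full real locus, and injectivity of the complex Puiseux parametrisation off $t=0$ yields precisely one arc and two distinct half-branches per factor. This rescaling is where the care you flagged belongs; once supplied, each real irreducible factor that remains irreducible over $\mathbb{C}\{x\}$ contributes two half-branches, each conjugate pair contributes none, and the even count follows as you state.
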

It is an immediate corollary of Proposition \ref{structure of nodal set} that for any given point $x$, which is not isolated in the nodal set for a non-constant analytic function $f$, there are at least two analytic curves $\Gamma_1$, $\Gamma_2$ branching at $x$ and consisting of nodal points of $f$. In most cases, we cannot glue $\Gamma_1$ and $\Gamma_2$ together to form a single analytic curve. For example, in the case $f(x_1,x_2)=x_1^2-x_2^3$, the nodal set has a cusp at the origin. However, in the case where $\Gamma_1$ consists of critical points of the same degree (in particular, if the degree does not jump up at the common point), $\Gamma_1\cup \Gamma_2$ is an analytic curve.
\begin{lemma}\label{lemma:no cusp}
    Let $x^0$ be a fixed point and we assume that there is an analytic curve $\Gamma_1:[0,1]\rightarrow \mathbb{R}^2$ with $\Gamma_1(0)=x^0$, and $d_{f}(x)=m\in [2, +\infty)$ for all $x\in \Gamma_1([0,1])$. Then the level set $\{x|f(x)=f(x^0)\}$ near $x^0$ is an analytic curve {$\Gamma_0$}. Moreover, we have $\Gamma_1\subseteq \Gamma_0$ and for any point $x\in \Gamma_0$, $d_f(x)=m$.
\end{lemma}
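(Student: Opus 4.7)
The strategy is to produce analytic local coordinates in which $\Gamma_1$ becomes a coordinate axis, and then to read off from the constancy of $d_f$ along $\Gamma_1$ that $f$ factors as $x_2^m$ times an analytic unit. Translate so that $x^0=0$ and normalize so that $f(x^0)=0$. Note first that $d_f$ is upper semi-continuous, so $d_f(x^0)\ge m$; combined with the hypothesis that $d_f\equiv m$ on $\Gamma_1$ including at the endpoint $x^0$, this rules out a cuspidal image of $\Gamma_1$ at $x^0$ (a cusp would force a strict jump of $d_f$ at $x^0$). Replacing $\Gamma_1$ by a smooth reparametrization of its image, we may therefore assume $\Gamma_1'(0)\ne 0$, and then an analytic local diffeomorphism $\Phi$, which preserves $d_f$ because analytic changes of coordinates preserve vanishing orders, straightens $\Gamma_1$ onto a segment of the $x_1$-axis. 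Denote the transformed function still by $f$.

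Since $f$ vanishes on a non-trivial segment of $\{x_2=0\}$ and is analytic, the identity theorem yields $f(x_1,0)\equiv 0$ in a neighborhood of the origin. Expand $f$ in powers of $x_2$,
\[ f(x_1,x_2)=\sum_{k\ge 1} b_k(x_1)\,x_2^k, \]
with each $b_k$ analytic. The identity $\partial_{x_1}^i\partial_{x_2}^k f(t,0)=k!\,b_k^{(i)}(t)$ for $k\ge 1$, together with the hypothesis $d_f(t,0)=m$ for $t$ in an open interval $I$, forces all mixed derivatives of $f$ of total order less than $m$ to vanish on $I$. Specializing to $i=0$ gives $b_k|_I\equiv 0$ for $1\le k\le m-1$, and by real-analyticity $b_k\equiv 0$ for $1\le k\le m-1$ on the entire coordinate patch. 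Consequently,
\[ f(x_1,x_2)=x_2^{m}\,g(x_1,x_2), \qquad g(x_1,x_2):=\sum_{j\ge 0} b_{j+m}(x_1)\,x_2^{j}. \]

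To conclude, invoke $d_f(0)=m$ one more time: the leading term of $f$ at $0$ in the Taylor series is $g(0,0)\,x_2^m$, so $g(0,0)=b_m(0)\ne 0$. Hence $g$ is an analytic unit in a neighborhood of the origin, and the nodal set $\{f=0\}$ coincides locally with $\{x_2=0\}$, a single analytic curve. For any $(t,0)$ with $t$ small, $g(t,0)\ne 0$, so the Taylor expansion of $f$ at $(t,0)$ begins with $g(t,0)\,x_2^m$ and $d_f(t,0)=m$. Pulling the whole picture back through $\Phi^{-1}$ yields the desired analytic curve $\Gamma$ on which $d_f\equiv m$. The one delicate preliminary is the exclusion of a cuspidal image of $\Gamma_1$; after that, the factorization step is routine bookkeeping in the straightened coordinates.
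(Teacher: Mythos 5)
The main content of the lemma --- as its label ``no cusp'' suggests --- is precisely to rule out a cuspidal singularity, and your proof tries to dismiss this at the outset with the parenthetical assertion that ``a cusp would force a strict jump of $d_f$ at $x^0$.'' Upper semi-continuity of $d_f$ together with the hypothesis $d_f\equiv m$ on $\Gamma_1$ only tells you $d_f(x^0)=m$, which is already given; it says nothing about whether the image of $\Gamma_1$ is a smooth arc at $x^0$. The claim that a cuspidal image would force $d_f(x^0)>m$ is true but requires an argument, and that argument is essentially the first half of the paper's proof: after rotating so that $\partial_1^m f(x^0)\neq 0$, the implicit function theorem applied to $\partial_1^{m-1}f$ produces a smooth analytic curve $\{\partial_1^{m-1}f=0\}$ through $x^0$, and since every point of $\Gamma_1$ has all derivatives of order $<m$ vanishing, the image of $\Gamma_1$ lies inside this smooth curve --- which a cuspidal arc cannot do. Without this step, your proof assumes what is effectively the hard part of the lemma, and the subsequent reparametrization ``we may therefore assume $\Gamma_1'(0)\neq 0$'' is not justified.

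Once the cusp exclusion is in place, the remainder of your argument is correct and is a genuinely different route from the paper's. You straighten $\Gamma_1$ onto the $x_1$-axis, expand $f=\sum_{k\geq 1} b_k(x_1)x_2^k$, use constancy of $d_f$ on a segment to kill $b_1,\dots,b_{m-1}$ identically, and read off $f=x_2^m g$ with $g$ an analytic unit; the conclusion about the level set and the constancy of $d_f$ along it then fall out immediately. The paper instead works in the unstraightened coordinates, shows $f$ vanishes on the IFT curve $\Gamma$, and rules out other zeros near $x^0$ via a Taylor/mean-value argument along horizontal segments joining points off $\Gamma$ to points on $\Gamma$ where $\partial_1^m f>0$. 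Your factorization is arguably cleaner for the final step, but since it is only available after the straightening, and the straightening requires the very IFT argument the paper uses, the two proofs end up sharing their essential content; yours just relocates the IFT step into an unproven preliminary.
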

\begin{proof}
Without loss of generality, by applying a suitable rotation, we assume $x^0=0$, $f(0)=0$ and $\partial_1^{m}f(0)>0$. By applying the analytic inverse function theorem to $\partial_1^{m-1} f$ at $0$, we have the set $\{\partial_1^{m-1} f=0\}$ near $0$ is an analytic curve $\Gamma_0$ passing through $0$. The existence of the analytic curve $\Gamma_1$ consisting of critical points of degree $m$ that starts at $0$ ensures that for infinitely many points $\{x^i\}_{i=1}^\infty \subset \Gamma_1$ accumulate at $0
\in \Gamma_0$, we have \begin{equation}
    D^{k}f(x^i)=0,\,\, \text{for $1\leq k\leq m-1$ and $i\in \mathbb{N}$.  }
\end{equation} As $\Gamma_0$ and $f$ are analytic, it holds that \begin{equation}
    D^{k}f(x)=0, \,\,\text{for  $x \in \Gamma_0$, and $1\leq k\leq m-1$.} 
\end{equation}
Then due to the fundamental theorem of calculus, we have $f=0$ in $\Gamma_0$.

{Now that we have identified an analytic curve $\Gamma_0$ passing through the origin, which is a level set of $f$ and contains $\Gamma_1$, it remains to find an open neighborhood $\mathcal{O}$ of the origin such that 
\[
\{f=0\}\cap \mathcal{O} = \Gamma_0 \cap \mathcal{O}.
\]}  

We notice the condition  $\partial_1^{m}f(0)>0$ ensures that $(1,0)$ is not a tangent vector of $\Gamma_0$ at $0$. Now, since $f$ and $\Gamma_0$ are analytic, we can choose $\mathcal{O}:=(-\epsilon,\epsilon)^2$ to be a small cube near $0$ with the requirement that all points $x$ in $\mathcal{O}$ satisfy $\partial _1^{m} f>0$ and, in addition, for any point $x\in \mathcal{O}\setminus \Gamma_0$, there is a horizontal line $l_x$ connecting $x$ and a point $x_{\Gamma}$ in $\Gamma_0$. Now, as
\[
\partial _1 f(x_{\Gamma})=\cdots=\partial _1^{m-1}f(x_{\Gamma})=0\quad \text{and}\quad  \partial _1^{m} f>0\,\,\text{in}\,\, l_x,
\]
applying the mean value theorem to $f$ in $l_x$, we have $f(x)\neq 0$. 
\end{proof}
\section{Structure of the Set of Critical Points}\label{LevelSet}
From here and on, we fix a single analytic steady state $\psi,$ satisfying \eqref{SEEBracket} on $\bar\Omega.$ Towards proving Theorem \ref{MainTheorem}, we will also {\it {assume that  there is no $F$ for which $\psi$ solves \eqref{SemilinearElliptic} on all of $\Omega$}}. By Proposition \ref{KeyProposition}, as well as Proposition \ref{structure of nodal set}, it must thus be that $\psi$ has \emph{curves} of critical points.  The purpose of this section is to study these in more detail. To start, the equation 
\[\{\psi,\Delta\psi\}=0\] implies that $\Delta\psi$ is constant on any connected level set of $\psi.$ We thus have the following crucial lemma.

\begin{lemma}\label{lemma: degree 2}
Let {$\Gamma_0$} be a curve of points in $\mathcal{C}.$ Suppose that for some {$x_*\in\Gamma_0$}, $d_\psi(x_*)=2.$ Then, $d_\psi(x)=2$ for every $x$ in the connected component of $\{\nabla\psi=0\}$ containing {$\Gamma_0.$}
\end{lemma}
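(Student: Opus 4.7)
The plan is to exploit the identity $\{\psi,\Delta\psi\}=0$, which forces $\Delta\psi$ to be constant along connected level sets of $\psi$, together with a local Hessian analysis at $x_*$. First, I would show $\psi\equiv c$ on $K$ for some constant $c$: since the critical set of a non-constant analytic function on a planar domain is, at each of its non-isolated points, locally a finite union of analytic arcs, the connected component $K$ is path-connected through analytic arcs, and $\nabla\psi\equiv 0$ along each such arc forces $\psi$ to be constant on it. Hence $K$ sits inside a single connected component $L$ of $\{\psi=c\}$, and the equation $\nabla^\perp\psi\cdot\nabla\Delta\psi=0$ makes $\Delta\psi$ constant on the regular part of $L$ and, by continuity, on all of $L$ --- so $\Delta\psi\equiv\Delta\psi(x_*)=:\alpha$ on $K$.

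Next, I would pin down that $\alpha\neq 0$. Because $d_\psi(x_*)=2$ the Hessian $D^2\psi(x_*)$ is nonzero, and because $x_*\in\Gamma$ is not isolated in $\{\nabla\psi=0\}$, the analytic inverse function theorem applied to $\nabla\psi$ rules out $D^2\psi(x_*)$ being invertible (else $\nabla\psi$ would be a local diffeomorphism at $x_*$, making $x_*$ an isolated zero of $\nabla\psi$, contradicting $x_*\in\Gamma$). Hence $D^2\psi(x_*)$ has rank exactly one, so $\alpha=\tr D^2\psi(x_*)$ equals its unique nonzero eigenvalue and is nonzero.

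Then for every $x\in K$ we have $\Delta\psi(x)=\alpha\neq 0$, which forces $D^2\psi(x)\neq 0$ and hence $d_\psi(x)\leq 2$; combined with $x\in K\subset\mathcal{C}$ (giving $d_\psi(x)\geq 2$) this yields $d_\psi(x)=2$. The one delicate point is establishing constancy of $\Delta\psi$ on $K$ in the edge case where the ambient level component $L\supset K$ consists entirely of critical points, since then the Poisson-bracket identity degenerates on $L$ and gives no direct information. This is handled by passing to nearby level sets $\{\psi=c+\epsilon\}$, which foliate a neighborhood of $K$ into smooth regular curves near $x_*$ because the rank-one Hessian there makes $\psi-c$ a non-degenerate quadratic in the transverse direction to $K$ (with $\psi-c$ single-signed near $x_*$); on each such curve $\Delta\psi$ is constant, and taking $\epsilon\to 0$ together with continuity of $\Delta\psi$ recovers the needed constancy on $K$.
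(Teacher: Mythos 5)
Your proposal follows essentially the same route as the paper: both hinge on the two facts that (a) $\Delta\psi$ is constant on the connected component $K$ of $\{\nabla\psi=0\}$ containing $\Gamma$, and (b) for a non-isolated critical point $x$, $D^2\psi(x)=0$ is equivalent to $\Delta\psi(x)=0$, so the $d_\psi=2$ condition propagates. Your route to $\alpha=\Delta\psi(x_*)\neq 0$ (rank-one Hessian via the analytic inverse function theorem applied to $\nabla\psi$) is a clean alternative to the paper's tangent/normal decomposition along $\Gamma$, which gives $\partial_{\tau\tau}\psi=\partial_{\tau n}\psi=0$ and hence $\Delta\psi=\partial_{nn}\psi$ at non-isolated critical points; both yield the same conclusion. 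You also correctly flag the delicate point that the Poisson-bracket identity degenerates on $K$ itself, so constancy of $\Delta\psi$ must be obtained by a limiting argument from nearby regular level curves --- the paper asserts this constancy tersely, and your sketch makes the needed step explicit. One small caution: the limiting argument near $x_*$ only gives constancy of $\Delta\psi$ on $\Gamma$ \emph{locally} near $x_*$; to reach all of the component $K$ you still need the analytic-continuation step (constancy on an open subarc of an analytic arc forces constancy on the whole arc), together with the observation that points with $d_\psi=2$ cannot be branch points of $K$ (as the paper's Lemma \ref{lemma:no cusp} ensures). This is implicit in your write-up and should be spelled out, but it does not change the structure of the argument, which matches the paper's.
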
 
\begin{proof}
{
We first choose $\delta$ small, such that the Hessian matrix $D^2\psi(z) \neq 0$ for all $z\in B_{\delta}(x_*)$. Based on Proposition \ref{structure of nodal set}, then there exists a $y\in B_{\delta}(x_*)\cap  \Gamma_0$ such that $\Gamma_0$ near $y$ is an analytic curve. Now pick $t$ and $n$ to be the tangent and normal direction of $\Gamma_0$ at $y$, respectively. The condition that $\Gamma_0$ consists of critical points of $\psi$ implies that $\partial_{tt}\psi(y)=\partial_{tn}\psi(y)=0$. Now, as $D^2\psi(y)\neq 0$, we have $\Delta \psi(y)=\partial_{tt}\psi(y)+\partial_{nn}\psi(y)\neq 0$. As $\{\psi,\Delta \psi\}=0$, we have that $\Delta \psi$ is constant along each connected component of $\{\nabla\psi=0\}$. In particular, we have $\Delta\psi(x) \neq 0$ and thus $d_\psi(x)=2$ for every $x$ in the connected component of $\{\nabla\psi=0\}$ containing $x_*$. This finishes the proof of the lemma.}
\end{proof}
Using Lemma \ref{lemma:no cusp},  we get the following corollary.
\begin{corollary}\label{DegreeTwo}
The points {in $\Omega$} for which $d_\psi(x)=2$ consist of a collection of isolated points or isolated analytic closed loops in $\Omega$.
\end{corollary}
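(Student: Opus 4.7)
\textbf{Plan of proof for Corollary \ref{DegreeTwo}.}
The plan is to analyze an arbitrary non-isolated point $x_* \in S:=\{x\in\Omega : d_\psi(x)=2\}$ and show that the connected component of $S$ containing $x_*$ is a smoothly embedded analytic closed loop. If $x_*$ is isolated in $S$, there is nothing to prove, so we may focus on the non-isolated case.

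First I would produce an analytic curve of critical points through $x_*$ by a standard application of the analytic implicit function theorem. Since $d_\psi(x_*)=2$, we have $\nabla\psi(x_*)=0$ but $D^2\psi(x_*)\neq 0$; after a rotation we may assume $\partial_{11}\psi(x_*)\neq 0$. The implicit function theorem applied to $\partial_1\psi$ produces an analytic curve $\gamma$ through $x_*$ along which $\partial_1\psi\equiv 0$. Because $x_*$ is non-isolated in $S\subseteq \mathcal{C}$, there is a sequence $x_n\to x_*$ with $\nabla\psi(x_n)=0$; in particular $\partial_1\psi(x_n)=0$, so the uniqueness in the implicit function theorem forces $x_n\in\gamma$. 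Hence the analytic function $\partial_2\psi\big|_\gamma$ has infinitely many zeros accumulating at $x_*$, and must vanish identically on $\gamma$. Thus $\gamma\subset\mathcal{C}$ is an analytic curve of critical points through $x_*$.

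Next I would invoke the preceding Lemma (the ``crucial Lemma'' just above the corollary): since $\gamma\subset\mathcal C$ is a curve of critical points containing the point $x_*$ with $d_\psi(x_*)=2$, the entire connected component $C$ of $\{\nabla\psi=0\}$ containing $\gamma$ has $d_\psi\equiv 2$. Then Lemma \ref{lemma:no cusp}, applied with $f=\psi$ and $m=2$, tells us that near every point of $\gamma$ the full level set $\{\psi=\psi(x_*)\}$ is a single analytic curve along which $d_\psi=2$. Running this argument at each point of $C$ shows that $C$ is locally an embedded analytic $1$-submanifold that coincides with a connected piece of the level set $\{\psi=\psi(x_*)\}$. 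Moreover, $C$ is a connected component of the closed set $\{\nabla\psi=0\}$, hence closed in $\Omega$.

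Finally, I would conclude by a topological argument: $C$ is a connected, closed, embedded analytic $1$-submanifold of $\bar\Omega$, contained in a bounded region. A connected analytic $1$-submanifold without boundary in a bounded domain is either a simple closed curve (a loop) or an arc whose endpoints lie on $\partial\Omega$; the latter is excluded (or handled separately) because at any interior accumulation point the smooth local structure we proved would propagate, so the curve cannot terminate inside $\Omega$. This shows that each non-singleton component of $S$ is an analytic closed loop, and different components are automatically separated. The main obstacle in this argument is the transition from the purely local analytic structure (obtained from Lemma \ref{lemma:no cusp}) to the global loop structure; the key point that makes this transition work is the constancy of the degree $d_\psi=2$ on the whole component $C$, which rules out the appearance of cusps, self-intersections, or branching along $C$.
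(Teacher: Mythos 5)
Your proof follows the same route as the paper, which leaves the argument essentially implicit ("Using Lemma~\ref{lemma:no cusp}, we get the following corollary"). You correctly fill in the missing details: the implicit-function-theorem construction of the critical curve through a non-isolated degree-$2$ point (applied to $\partial_1\psi$, which has nonvanishing gradient there), the accumulation argument showing $\partial_2\psi$ vanishes on that curve so it lies in $\mathcal{C}$, the appeal to the preceding lemma for degree constancy on the whole component, and Lemma~\ref{lemma:no cusp} for the smooth local arc structure. This is an appropriate expansion of the paper's one-liner, and the IFT route is in fact the right tool since Proposition~\ref{structure of nodal set} concerns nodal sets of a scalar function rather than the common zero set $\{\nabla\psi=0\}$.

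The one step you flag but do not actually settle is a genuine gap: ruling out the possibility that a connected component of $\{d_\psi=2\}$ is an arc whose ends reach $\partial\Omega$. Your parenthetical justification ("cannot terminate inside $\Omega$") addresses interior termination, not the boundary case, and the boundary case is exactly what is used later in the proof of Lemma~\ref{BoundaryCritical} to conclude that a critical curve touching $\partial\Omega$ must have degree $\geq 3$. To close it, suppose a degree-$2$ critical arc $\gamma$ reached a point $x^1\in\partial\Omega$. Since $\psi$ is constant on $\partial\Omega$, one has $\psi|_\gamma=\psi(x^1)=\psi|_{\partial\Omega}$, so the level set $\{\psi=\psi(x^1)\}$ near $x^1$ contains both the boundary $\partial\Omega$ and $\gamma$. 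But Lemma~\ref{lemma:no cusp} (applied at $x^1$, where $\psi$ is still analytic since $\psi\in C^\omega(\bar\Omega)$) forces this level set to be a single smooth analytic curve near $x^1$; hence $\gamma$ must coincide with $\partial\Omega$ near $x^1$ and, by analyticity, everywhere, so $\gamma\cap\Omega=\emptyset$ and no such arc can contribute points with $d_\psi=2$ inside the open set $\Omega$.
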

\begin{proof}
{Recall that we are assuming that $\psi$ is analytic on $\bar\Omega.$ It follows then from Lemma \ref{lemma:no cusp} that if a curve $\Gamma_0$ of critical points of degree $2$ from the interior were to hit $\partial\Omega$, then the degree at the supposed intersection point, $x\in\partial\Omega$, would have to be at least $3$. This would contradict the constancy of $\Delta\psi$ along $\Gamma_0,$ as in Lemma \ref{lemma: degree 2}.} 
\end{proof}
The following result, which will be used later, is of a similar flavor.

\begin{lemma}\label{GeneralDegree}
{Let $d\geq 3$ and assume that $\Gamma_0$ be an analytic curve for which $d_{\psi}(x)=d,$ for all $x\in\Gamma_0$.} Then, for all $x\in\Gamma_0,$ 
\[d_{\Delta\psi}(x)=d-2.\]
\end{lemma}
\begin{proof}
{It is clear that $D^{i}\Delta \psi(x)=0$ for all $x\in \Gamma_0$ and $i\leq d-3$. Hence it suffices to check $D^{d-2}\Delta \psi(x) \not=0$ for all $x\in \Gamma_0$.}
Fix $x^0\in\Gamma_0$. By the rotational invariance of $\Delta,$ we may assume that the tangent vector to $\Gamma_0$ at $x^0$ is $(1,0).$ Now, for each $y\in\Gamma_0$ we have $D^{d-1}\psi(y)=0$. It follows that $D^{d-1}\partial_1\psi(x^0)=0$. {Now $D^{d}\psi(x^0)\not=0$ implies that $\partial_{2}^d\psi(x^0)\not=0$. Hence, one has $D^{d-2}\Delta \psi(x^0) \not=0$. This concludes the proof. }
\end{proof}

\begin{remark}
We note that it is actually possible to show that the degree of vanishing of an analytic Euler steady state has to be constant on any connected component of the set of critical points. This is shown in the thesis of the second named author \cite{yupeithesis}.
\end{remark}

\subsection{The Innermost Loop}
Since $\mathcal{C}(\psi)$ contains curves, $\Omega$ must be partitioned by the level set $\{\nabla\psi=0\}$ (this follows from the above discussions and Proposition \ref{structure of nodal set}). In particular, we can find at least one curve of critical points that is either a closed loop in $\Omega$ or starts and ends at $\partial\Omega$. Note that in the case where the curve starts and ends at $\partial\Omega$,  it must touch the boundary at two points (the two points could be the same).  Together with a boundary portion between these two points, the curve would still enclose a closed simply connected region.  Now, let $\Gamma$ be a curve with the property that the region it encloses (possibly with the aid of a portion of the boundary) contains no curves of critical points. Let us call this enclosed domain $\Omega_{\Gamma},$ which is necessarily simply connected.
 
 All of the analysis below will be restricted to $\Omega_{\Gamma}.$ Since the critical points of $\psi,$ contained in $\Omega_{\Gamma}$ must be isolated, we have that
\begin{equation}\label{SemilinearEllipticREAL}\Delta\psi=F(\psi)  \text{ in $\overline{\Omega_{\Gamma}},$ }\end{equation} for some continuous function $F$. By subtracting a constant, we may also assume that 
\[\psi=0\quad \text{on}\,\, {\partial\Omega_{\Gamma}}.\]
We will now show actually $\nabla\psi|_{\partial\Omega_{\Gamma}}=0,$ which is automatic when $\Gamma$ is a loop inside $\Omega$. {Note that if $\Gamma$ is not a loop in (the interior of) $\Omega$, then its endpoints must lie on $\partial\Omega$ (and may coincide).} 

\begin{lemma}\label{BoundaryCritical}
If $\Gamma$ touches $\partial\Omega,$ then the portion of $\partial\Omega$ between the two endpoints of $\Gamma$ in $\partial \Omega_{\Gamma}$ lies in $\mathcal{C}(\psi).$
\end{lemma}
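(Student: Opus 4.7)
The conclusion is equivalent to $\nabla\psi\equiv 0$ on all of $\partial\Omega$: the function $a(s):=\partial_n\psi|_{\partial\Omega}$ is analytic on the connected analytic curve $\partial\Omega$, so its vanishing on the subarc $\Gamma'$ propagates to the whole boundary by analyticity. I argue by contradiction, assuming $a(q_0)\neq 0$ at some $q_0\in\Gamma'$, and deriving an incompatibility between the leading behaviors of $\Delta\psi$ and $F(\psi)$ at $q_0$.

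Since $\psi$ is constant on the connected set $\Gamma$, say $\psi\equiv c$, continuity at the shared endpoint forces $\psi\equiv c$ on $\partial\Omega$ as well. By Corollary \ref{DegreeTwo}, $\Gamma$ contains no degree-two critical point (otherwise it would sit inside an isolated closed loop lying in $\Omega$, contradicting its reaching $\partial\Omega$); thus $d_\psi=d\geq 3$ on the regular part of $\Gamma$, and Lemma \ref{GeneralDegree} gives $d_{\Delta\psi}=d-2\geq 1$, so $\Delta\psi\equiv 0$ on $\Gamma$. Since $\{\psi,\Delta\psi\}=0$ makes $\Delta\psi$ constant on connected components of level sets of $\psi$, we obtain $\Delta\psi\equiv 0$ on $\partial\Omega$ and in particular $F(c)=0$.

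Next I derive a Puiseux expansion of $F$ at $c$. Pick an interior point $x_0$ of $\Gamma$ and an analytic signed normal coordinate $u$ with $u>0$ pointing into $\Omega_\Gamma$. Then $\psi-c=A u^d+O(u^{d+1})$ with $A\neq 0$, and a direct computation gives $\Delta\psi=d(d-1)A|\nabla u|^2 u^{d-2}+O(u^{d-1})$. Inverting $u\sim((\psi-c)/A)^{1/d}$ yields
\[
F(c+\epsilon)=C\,\epsilon^{(d-2)/d}+o\bigl(\epsilon^{(d-2)/d}\bigr),\qquad C\neq 0,
\]
for $\epsilon\to 0$ of sign $\mathrm{sgn}(A)$. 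The non-analyticity of $F$ at $c$ excludes any regular point of $\{\psi=c\}$ from the interior of $\Omega_\Gamma$: at such a point the standard straightening lemma would force $F$ to be analytic at $c$, contradicting the expansion above. Since $\Omega_\Gamma$ contains no critical curves by construction, $\{\psi=c\}\cap\Omega_\Gamma$ is at most a discrete set, and $\psi-c$ therefore has constant sign $\mathrm{sgn}(A)$ on $\Omega_\Gamma$.

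Now work in analytic Fermi coordinates $(s,t)$ at $q_0$ with $\partial\Omega=\{t=0\}$ and $t>0$ pointing into $\Omega$. The condition $\psi|_{\partial\Omega}=c$ writes $\psi-c=t\,h(s,t)$ with $h(s,0)=a(s)$, so $\psi(s_0,t)-c=a(q_0)\,t+O(t^2)$ along the inward normal at $q_0$. Its sign is $\mathrm{sgn}(a(q_0))$, which must equal $\mathrm{sgn}(A)$ by the sign-constancy just established. The Puiseux expansion of $F$ therefore applies and gives
\[
F\bigl(\psi(s_0,t)\bigr)=C'\,t^{(d-2)/d}\bigl(1+o(1)\bigr),\qquad C'\neq 0,
\]
as $t\to 0^+$. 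On the other hand $\Delta\psi$ is analytic near $q_0$ with $\Delta\psi(q_0)=F(c)=0$, so $\Delta\psi(s_0,t)=O(t)$. Since $(d-2)/d<1$ for $d\geq 3$, dividing the equation $\Delta\psi=F(\psi)$ by $t$ forces a bounded quantity to equal one blowing up like $t^{-2/d}$, a contradiction. Hence $a\equiv 0$ on $\Gamma'$, and by analyticity on all of $\partial\Omega$. The main obstacle is the sign-constancy of $\psi-c$ in $\Omega_\Gamma$: this is what allows the non-analytic Puiseux behavior of $F$ derived near $\Gamma$ to be transferred to the description of $F(\psi)$ near $\Gamma'$, and it follows from the innermost choice of $\Gamma$ together with the non-analyticity of $F$ at $c$.
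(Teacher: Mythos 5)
Your proof is correct and rests on the same two pillars as the paper's: Corollary \ref{DegreeTwo} forces degree $\geq 3$ on $\Gamma$, and Lemma \ref{GeneralDegree} supplies the incompatible vanishing order $d-2$ for $\Delta\psi$. The execution differs slightly in where the contradiction is localized. The paper stays on $\Gamma$: a regular boundary point would make $F$ analytic at $0$ with $F(0)=0$, hence $F(s)=O(s)$, forcing $\Delta\psi=F(\psi)$ to vanish to order $\geq d$ on $\Gamma$, against Lemma \ref{GeneralDegree}. You instead push the Puiseux behavior $F(c+\epsilon)\sim C\epsilon^{(d-2)/d}$ (derived from $\Gamma$) out to the regular boundary point $q_0$, and there contrast the $t^{(d-2)/d}$ singularity of $F(\psi)$ with the $O(t)$ smoothness of $\Delta\psi$; this route requires the intermediate sign-constancy of $\psi-c$ in $\Omega_\Gamma$, which you supply. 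What you gain is an explicit leading-order expansion of $F$ at $c$ and the sign-constancy statement (both of which the paper establishes separately later, e.g.\ in Lemma \ref{lemma: Puisuex expression quadratic} and Lemma \ref{lemma: F Puiseux expression near 0}); what the paper's version gains is economy, since comparing vanishing orders directly on $\Gamma$ sidesteps the one-sided/two-sided bookkeeping entirely. Both are sound; yours is a fuller, slightly more self-contained version of the same idea.
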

\begin{proof}
First, we claim that if  $\Gamma$ touches $\partial\Omega$, then $d_\psi(x)\geq 3$ for all $x\in\Gamma$. Suppose that there is a point $x_0\in \Gamma\cap \Omega$ where $\psi$ vanishes at degree $2$. By Lemma \ref{lemma: degree 2}, all the points in $\Gamma$ have to be critical points of $\psi$ with vanishing degree $2$. However, just as in the proof of Corollary \ref{DegreeTwo}, we have shown that $\Gamma$ cannot touch the boundary. This leads to a contradiction.

Second, $\nabla \psi$ must vanish on $\partial\Omega\cap \partial \Omega_\Gamma$.
Indeed, take some $x\in\partial\Omega\cap \partial \Omega_\Gamma$, if $\nabla\psi(x)\not=0,$ then it follows from the inverse function theorem that $F$ from \eqref{SemilinearEllipticREAL} must be \emph{analytic} in a neighborhood of $0$ (as $\psi$ takes zero value in $\partial \Omega$). In addition, the condition that $d_\psi(x)\geq 3$ for all $x\in\Gamma$ shows {$\psi$ takes the value $0$ in $\Gamma$} and that $F(0)=0$. Now as $\Delta\psi=F(\psi)$ with $F$ analytic at $0$ and $F(0)=0$, considering the leading term of $F(\psi)$ near $\Gamma$, the vanishing degree of $\Delta \psi$ at $\Gamma$ is greater than or equal to the vanishing degree of $\psi$ at $\Gamma$. This would contradict Lemma \ref{GeneralDegree}. Hence, the proof of the lemma is complete.
\end{proof}

{Based on Lemmas \ref{BoundaryCritical} and \ref{DegreeTwo}, we have the following corollary concerning $\partial \Omega_{\Gamma}$.}
\begin{corollary}\label{TwoCases}
One of the following two cases must hold. \begin{itemize}
    \item Case 1: $\partial\Omega_{\Gamma}$ is {an analytic loop in $\Omega$} and contains only critical points of degree 2.
    \item Case 2: $\partial\Omega_{\Gamma}$ is a closed loop {in $\bar\Omega$} containing critical points of degree larger than 2. 
\end{itemize}
\end{corollary}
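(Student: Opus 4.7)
The plan is to dichotomize on the presence or absence of a degree-two critical point on $\partial\Omega_{\Gamma}$. First I would verify that $\partial\Omega_{\Gamma}$ is in any case a closed loop consisting entirely of critical points of $\psi$. If the curve $\Gamma$ singled out just before the statement is itself a closed loop in $\Omega$, then $\partial\Omega_{\Gamma}=\Gamma$ and this is immediate. Otherwise $\Gamma$ starts and ends on $\partial\Omega$, and Lemma \ref{BoundaryCritical} shows that the portion of $\partial\Omega$ joining the two endpoints of $\Gamma$ also lies in $\mathcal{C}(\psi)$; so $\partial\Omega_{\Gamma}$ is the union of $\Gamma$ with this boundary arc and forms a closed loop of critical points. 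Moreover, the proof of Lemma \ref{BoundaryCritical} shows that in this latter sub-case every point of $\Gamma$ satisfies $d_\psi\geq 3$, so $\partial\Omega_{\Gamma}$ already contains a point of degree strictly greater than two, and we are automatically in Case 2.

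The remaining possibility is that $\Gamma$ is an interior closed loop and $\partial\Omega_{\Gamma}=\Gamma$. Here I would split according to whether there exists some $x_*\in\partial\Omega_{\Gamma}$ with $d_\psi(x_*)=2$. If not, then every point of $\partial\Omega_{\Gamma}$ has degree $\geq 3$, and we are in Case 2. If there is such an $x_*$, then the (unnamed) lemma just above Corollary \ref{DegreeTwo} forces $d_\psi\equiv 2$ on the entire connected component of $\{\nabla\psi=0\}$ containing $\Gamma$, and in particular on all of $\partial\Omega_{\Gamma}$; this simultaneously rules out any mixed configuration with both degree-two and higher-degree points on $\partial\Omega_{\Gamma}$. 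Corollary \ref{DegreeTwo} then says that the locus of degree-two critical points consists of isolated points and isolated analytic closed loops, and since $\partial\Omega_{\Gamma}$ is a connected curve of such points it must coincide with one of these analytic loops, giving Case 1.

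The argument is entirely organizational; the substantive inputs are Lemma \ref{BoundaryCritical} (to close up the loop and to enforce $d_\psi\geq 3$ on $\Gamma$ whenever $\Gamma$ touches $\partial\Omega$), the constancy-of-degree lemma (to exclude mixed degrees along the connected loop $\partial\Omega_{\Gamma}$), and Corollary \ref{DegreeTwo} (to upgrade a degree-two loop to an analytic loop). I do not anticipate any real obstacle; the only point requiring a line of care is checking that the single-degree conclusion does apply on the full loop $\partial\Omega_{\Gamma}$ and not just on the initially chosen component $\Gamma$, which is handled by the connectedness of $\partial\Omega_{\Gamma}$ within $\{\nabla\psi=0\}$.
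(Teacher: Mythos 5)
Your proposal is correct and fills in the proof the paper leaves implicit after Lemma \ref{BoundaryCritical}; it uses exactly the intended ingredients (Lemma \ref{BoundaryCritical}, the constancy-of-degree lemma, and Corollary \ref{DegreeTwo}) in the natural order. The dichotomy is organized correctly: when $\Gamma$ meets $\partial\Omega$ you correctly observe that Lemma \ref{BoundaryCritical}'s proof already forces $d_\psi\geq 3$ on $\Gamma$, giving Case 2; when $\Gamma$ is an interior loop you correctly invoke the degree-propagation lemma to rule out mixed degrees on $\partial\Omega_{\Gamma}=\Gamma$ and then Corollary \ref{DegreeTwo} to upgrade the degree-two loop to an analytic one.
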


 Our goal will now be to show that, in both cases, $\psi$ is necessarily a radial function. We will first deal with Case 2, which will be solved essentially by applying existing results from the literature. Case 1 will be divided into two subcases based on whether the function $F$ appeared in \eqref{SemilinearEllipticREAL} is analytic at the value of the stream function on $\partial \Omega_\Gamma$. These two subcases will be dealt with in the following section using the moving plane method and contradiction argument, respectively. 

\subsection{Rigidity in Case 2}\label{sec3.2} 
In the previous subsection, we reduced the problem into studying a semilinear elliptic equation of Serrin type \cite{serrin1971symmetry}. More precisely, we found a simply connected domain $\Omega_{\Gamma}$ for which $\nabla\psi$ vanishes on $\partial\Omega_{\Gamma}$ and $\Omega_{\Gamma}$ contains only isolated critical points of $\psi$. As a consequence of Lemma \ref{lemma:function relation bracket case}, we deduced that $\psi$ solves \eqref{SemilinearEllipticREAL} in $\Omega_{\Gamma}$. In this section, we consider Case 2 of Corollary \ref{TwoCases}, where all points in $\partial\Omega_{\Gamma}$ are critical points with degree greater than or equal to $3$. Our goal is to establish the following proposition.  
\begin{proposition}\label{prop: rigidity in degenate case}
    Let $\Omega_{\Gamma}$ be a simply connected domain and $\psi$ be an analytic function in $\overline{\Omega_{\Gamma}}$ such that there is no non-isolated critical point of $\psi$ in $\Omega_{\Gamma}\cap \{\psi=0\}$. Now we further assume that there is a continuous function $F$, such that $\psi$ satisfies the following overdetermined problem for an elliptic equation.
    \begin{equation}
        \begin{aligned}
            &\Delta\psi(x)=F(\psi)(x),\text{ for $x\in \Omega_{\Gamma}$,}\\&
            \psi(x)=\nabla\psi(x)=\nabla^2\psi(x)=0, \text{ for all $x\in \partial \Omega_{\Gamma}$}.
        \end{aligned}
    \end{equation}
    Then $\psi$ is a radial function.
\end{proposition}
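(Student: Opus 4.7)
The plan is to reduce Proposition \ref{prop: rigidity in degenate case} to an application of Brock's continuous-rearrangement symmetry theorem from \cite{brock2000continuous}, which is well-suited to this setting because it requires neither Lipschitz regularity of the nonlinearity nor smoothness of the boundary. The argument proceeds in three steps: extract a power-type singularity of $F$ at $0$ from the high-order vanishing on $\partial\Omega_{\Gamma}$; use this non-analyticity to rule out interior sign-changes of $\psi$; then invoke Brock's theorem.

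For the first step, at any point of $\partial\Omega_{\Gamma}$, the hypothesis $\nabla\psi=\nabla^{2}\psi=0$ forces the vanishing order of $\psi$ to be some integer $d\geq 3$. Lemma \ref{GeneralDegree} then gives $d_{\Delta\psi}=d-2$ there. Expanding in coordinates $(s,\rho)$ adapted to a smooth arc of the boundary, with $\rho$ a signed distance coordinate, one has $\psi = a(s)\rho^{d}+O(\rho^{d+1})$ and $\Delta\psi = b(s)\rho^{d-2}+O(\rho^{d-1})$; matching in the relation $\Delta\psi = F(\psi)$ forces $F(t) = C\,t^{(d-2)/d}(1+o(1))$ as $t\to 0^{+}$. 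Thus $F$ is H\"older but not Lipschitz, and in particular not analytic, at $0$.

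For the second step, suppose $\psi(p)=0$ for some $p\in\Omega_{\Gamma}$. The hypothesis that there are no non-isolated critical points of $\psi$ in $\Omega_{\Gamma}\cap\{\psi=0\}$ means the only thing to exclude is a regular interior zero. If $p$ were regular, the analytic inverse function theorem furnishes local analytic coordinates in which $\psi$ is one of the coordinate functions; then $\Delta\psi$, being analytic, would be expressible as an analytic function of $\psi$ on a neighborhood of $0$, which gives an analytic representation of $F$ on a neighborhood of $0$ and contradicts Step 1. Hence the interior zero set of $\psi$ is discrete, its complement in $\Omega_{\Gamma}$ is connected, and $\psi$ is therefore single-signed by continuity; replacing $\psi$ by $-\psi$ if necessary, we may assume $\psi\geq 0$ on $\overline{\Omega_{\Gamma}}$.

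For the third step, we now have a nonnegative solution to a continuous semilinear elliptic equation $\Delta\psi = F(\psi)$ on a bounded simply connected domain, with overdetermined Dirichlet data $\psi\equiv 0\equiv |\nabla\psi|$ on $\partial\Omega_{\Gamma}$, and neither the boundary nor $F$ is required to be smooth. This is precisely the setting of Brock's symmetry theorem \cite{brock2000continuous}, which, proceeding via continuous Steiner symmetrization rather than the moving plane method, concludes that $\Omega_{\Gamma}$ is a disk and $\psi$ is radial. The main obstacle is Step 2: converting the non-analyticity of $F$ into a local obstruction to interior zeros. The argument hinges crucially on $\psi$ being analytic rather than merely $C^{\infty}$, since the straightening procedure would otherwise only yield $C^{\infty}$ information about $F$ near $0$, which would not contradict the fractional-power behavior derived in Step 1.
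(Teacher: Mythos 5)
Your proposal is correct and follows essentially the same route as the paper: reduce to showing $\psi$ is single-signed by using the degree gap $d_{\Delta\psi}=d_\psi-2$ from Lemma~\ref{GeneralDegree} to rule out analyticity of $F$ at $0$, then invoke Brock's continuous Steiner symmetrization result (Proposition~\ref{prop: steiner symmetry type}). The only cosmetic difference is that you extract explicit power-law asymptotics $F(t)\sim C\,t^{(d-2)/d}$ before ruling out a regular interior zero, whereas the paper argues more directly by contradiction: a regular interior zero would make $F$ analytic at $0$ with $F(0)=0$, forcing $\Delta\psi=F(\psi)$ to vanish to order at least $d$ on $\partial\Omega_{\Gamma}$, contradicting $d_{\Delta\psi}=d-2$.
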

We first recall the following result based on the Steiner symmetrization method.
\begin{proposition}[From Theorem 5.2 and Theorem 6.1 in \cite{brock2000continuous} or Theorem 3.2 in \cite{ruiz2023symmetry}]\label{prop: steiner symmetry type}
    Let $\Omega$ be an open bounded domain and $\psi$ be a non-negative $C^2$ solution to \begin{equation}
    \left\{
    \begin{aligned}
          &\Delta{\psi}(x)=F(\psi)(x),\quad &x\in \Omega,\\
          &\psi(x)=0,\  \nabla \psi(x)=0, \nabla^2\psi(x)=0, \quad &x\in \partial \Omega.
    \end{aligned}
    \right.
    \end{equation} 
    Then $F$ being continuous implies that $\psi$ is locally radial in the sense that $$\mathbb{D}:=\{0<\psi<sup_{x\in \Omega}\psi(x)\}=A\cup \mathcal{C},$$
    where $A=\cup_{i}A_{i}$, $\mathcal{C}=\{\nabla \psi=0\}$ and $A_{i}$ are disjoint open annuli or balls and $\psi$ is radially symmetric in $A_{i}$.  
\end{proposition}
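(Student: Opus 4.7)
I would prove the statement via the continuous Steiner symmetrization developed by Brock, applying it in every direction $e\in S^{1}$ and extracting the local radial structure from the equality case of the associated Pólya--Szegő inequality. The overdetermined boundary data $\psi=\nabla\psi=\nabla^{2}\psi=0$ on $\partial\Omega$ is exactly what allows the rearrangement inequality to be forced into equality.

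\textbf{Key steps.} For a fixed $e\in S^{1}$, let $\{\psi^{\tau}\}_{\tau\geq 0}$ be Brock's continuous Steiner rearrangement of $\psi$ (extended by zero outside $\Omega$) with respect to lines orthogonal to $e$; this family is equimeasurable with $\psi$, continuous in $\tau$, and converges as $\tau\to\infty$ to the full Steiner symmetrization $\psi^{*}$. The Pólya--Szegő inequality gives that $\tau\mapsto\int|\nabla\psi^{\tau}|^{2}$ is non-increasing. Multiplying $\Delta\psi=F(\psi)$ by $\psi$ and integrating by parts using $\psi=\nabla\psi=0$ on $\partial\Omega$ yields
\[
\int_{\Omega}|\nabla\psi|^{2}\,dx=-\int_{\Omega}\psi F(\psi)\,dx.
\]
By equimeasurability the right-hand side is $\tau$-invariant. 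A matching lower bound, coming from the fact that $\psi^{\tau}$ is admissible in the natural variational problem associated with $F$ (with the same vanishing boundary data, thanks to the joint vanishing of $\psi$ and $\nabla\psi$ on $\partial\Omega$), then forces $\int|\nabla\psi^{\tau}|^{2}=\int|\nabla\psi|^{2}$ for every $\tau$, i.e.\ equality in Pólya--Szegő along the whole deformation.

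\textbf{From equality to local symmetry.} Brock's equality case asserts that for a.e.\ level $t\in(0,M)$, each connected component of $\{\psi>t\}$ either lies in the critical set $\mathcal{C}$ or is symmetric under reflection across some line orthogonal to $e$. Applying this for every $e\in S^{1}$, and using that in each connected component $A_{k}$ of $\mathbb{D}\setminus\mathcal{C}$ the level-set foliation is smooth (since $\nabla\psi\neq 0$ there), the centre of reflection must be locally constant on $A_{k}$. Intersecting the symmetries over all $e$ then forces $A_{k}$ to be rotationally invariant about a single point $x_{k}$, so that $A_{k}$ is an open ball or open annulus on which $\psi$ is radial.

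\textbf{Main obstacle.} The subtle part is converting the equality in Pólya--Szegő into pointwise symmetry at the degenerate boundary $\partial\Omega$, where the Hopf lemma fails because $\nabla\psi=0$ there. The overdetermined hypothesis $\nabla^{2}\psi=0$ on $\partial\Omega$, combined with the continuity of $F$, is what compensates for this degeneracy: it forces $F(0)=0$ and yields enough vanishing of $\psi$ near $\partial\Omega$ for both the integration by parts and the rearrangement identities to close. Executing Brock's equality analysis cleanly component by component, with each $A_{k}$ carrying its own (possibly distinct) centre $x_{k}$, is where the bulk of the technical work lies; one must also rule out that the "symmetry centres" vary continuously within a single $A_{k}$, which is ultimately prevented by the smooth foliation by regular level sets together with the radial symmetry of each such level set.
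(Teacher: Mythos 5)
The paper offers no proof of this proposition: it is imported wholesale from Brock's continuous Steiner symmetrization theory (the cited Theorems 5.2 and 6.1 of \cite{brock2000continuous}, or Theorem 3.2 of \cite{ruiz2023symmetry}), so there is no internal argument to compare against. Judged as a reconstruction of the cited proof, your sketch has the right architecture --- continuous Steiner symmetrization in every direction $e$, Pólya--Szegő monotonicity of the Dirichlet energy, and the characterization of functions that are locally symmetric in every direction as an annulus/ball decomposition --- but the pivotal step is wrong as stated.

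The gap is your ``matching lower bound.'' You argue that because $\psi^{\tau}$ is admissible in the variational problem associated with $F$, the Dirichlet energy cannot decrease, hence equality in Pólya--Szegő holds for all $\tau$. That reasoning is only valid if $\psi$ \emph{minimizes} the functional $J(v)=\frac12\int|\nabla v|^{2}+\int\Phi(v)$, $\Phi'=F$; the hypothesis is only that $\psi$ is a solution, i.e.\ a critical point, and admissibility of a competitor gives no inequality at a critical point (indeed the functional is typically unbounded below and $\psi$ is typically not a minimizer). Brock's actual mechanism is a first-order expansion at $\tau=0^{+}$: writing
\[
0\le \int_{\Omega}|\nabla\psi|^{2}-\int_{\Omega}|\nabla\psi^{\tau}|^{2}\le 2\int_{\Omega}\nabla\psi\cdot\nabla(\psi-\psi^{\tau})=-2\int_{\Omega}F(\psi)\,(\psi-\psi^{\tau}),
\]
and using equimeasurability ($\int\Phi(\psi^{\tau})=\int\Phi(\psi)$) together with the $L^{1}$-Lipschitz dependence $\|\psi-\psi^{\tau}\|_{L^{1}}=O(\tau)$ and the continuity of $F$, the right-hand side is $o(\tau)$. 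So one only learns that the energy decrease is $o(\tau)$, not that it vanishes for all $\tau$; and the passage from this to symmetry requires Brock's \emph{quantitative} lower bound for the energy decrease in terms of the failure of local symmetry in direction $e$ --- the classical equality case of Pólya--Szegő for Steiner symmetrization is not by itself strong enough to yield reflection symmetry of the superlevel-set components. That quantitative lemma is precisely the content of the cited result and is absent from your sketch. Two smaller points: the overdetermined conditions $\nabla\psi=\nabla^{2}\psi=0$ on $\partial\Omega$ are not what forces equality in the rearrangement inequality (Brock's theorem already holds for the pure Dirichlet problem in $H^{1}_{0}$); their role here is essentially to ensure the zero extension of $\psi$ lies in $H^{1}_{0}$ of the rough open set $\Omega$ so the cited theorems apply. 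Your final step, intersecting the symmetries over all directions to obtain the $A\cup\mathcal{C}$ decomposition, matches the cited Theorem 6.1 and is fine as quoted.
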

\begin{proof}[Proof of Proposition \ref{prop: rigidity in degenate case}]
Given Proposition \ref{prop: steiner symmetry type}, all we need to do is show that $\psi$ is {single-signed} in $\Omega_{\Gamma}.$ Towards a contradiction, assume that $\psi$ takes both positive and negative values in $\Omega_{\Gamma}.$ Simply by continuity, it follows that the set of zeros of $\psi$ in $\Omega_{\Gamma}$ is infinite. It follows from our choice of $\Gamma$ that at least one of those zeros is not a critical point of $\psi$ (since the critical set in $\Omega_{\Gamma}\cap\{\psi=0\}$ is discrete). It follows that $F$ is analytic in a neighborhood of zero. This contradicts Lemma \ref{GeneralDegree} (just as in the proof of Lemma \ref{BoundaryCritical}).
\end{proof}
\section{Moving Plane Arguments and the Proof of the Main Theorem}\label{Rigidity}
We now move on to prove the rigidity in Case 1 of Corollary \ref{TwoCases}, 
{where our basic assumption is that there is no $F$ for which $\psi$ solves \eqref{SemilinearElliptic}  in all of $\Omega$.} 
More precisely, we may assume that there exists a simply connected domain $\Omega_{\Gamma}$ with $\partial\Omega_{\Gamma}$ analytic and $\psi$ satisfies
\begin{equation}\label{SemilinearEllipticFinal}
        \begin{aligned}
            &\Delta\psi(x)=F(\psi)(x),\text{for $x\in \Omega_{\Gamma}$,}\\&
            \psi(x)=\nabla\psi(x)=0, \text{for all $x\in \partial \Omega_{\Gamma}$}.
        \end{aligned}
    \end{equation}
Moreover, by scaling, we may assume that $F(0)=1.$ In particular, \[\partial_{nn}\psi=1\quad \text{on}\,\, \partial\Omega_{\Gamma}.\] An important consequence is that $\psi$ increases in the normal direction of the boundary, in a tubular neighborhood of the boundary. In the case where $F$ is $C^1$ and $\psi$ is non-negative, we have the following rigidity result due to Serrin \cite{serrin1971symmetry} (see also the nice review \cite{nitsch2018classical}).
\begin{theorem}[Serrin, \cite{serrin1971symmetry}]
Assume that $\psi$ satisfies \eqref{SemilinearEllipticFinal} and that $\psi>0$ in $\Omega_{\Gamma},$ while $F$ is Lipschitz continuous. Then $\Omega_{\Gamma}$ is a ball and $\psi$ is radial. 
\end{theorem}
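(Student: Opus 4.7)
The plan is to apply the moving plane method of Alexandrov--Serrin, sweeping across $\Omega_\Gamma$ in every direction and showing that $\Omega_\Gamma$ must be symmetric with respect to a hyperplane perpendicular to each direction. Together with connectedness, this forces $\Omega_\Gamma$ to be a ball and $\psi$ to be radial. Fix a unit vector $e\in\mathbb{S}^1$. For $\lambda\in\mathbb{R}$ set
\[
T_\lambda=\{x\cdot e=\lambda\},\qquad \Sigma_\lambda=\{x\in\Omega_\Gamma : x\cdot e>\lambda\},\qquad \Sigma_\lambda'=R_\lambda(\Sigma_\lambda),
\]
where $R_\lambda$ is the reflection through $T_\lambda$, and let $\psi_\lambda(x)=\psi(R_\lambda x)$. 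Since $\Delta$ commutes with reflections and $F$ is independent of $x$, the reflected function $\psi_\lambda$ solves the same semilinear equation $\Delta\psi_\lambda=F(\psi_\lambda)$ on $\Sigma_\lambda'$.

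The next step is to define $w_\lambda=\psi_\lambda-\psi$ on $\Sigma_\lambda'$ and derive the linear elliptic equation
\[
\Delta w_\lambda + c_\lambda(x)\,w_\lambda = 0,\qquad c_\lambda(x)=-\frac{F(\psi_\lambda(x))-F(\psi(x))}{\psi_\lambda(x)-\psi(x)},
\]
with $\|c_\lambda\|_\infty\le \mathrm{Lip}(F)$ — this is precisely where Lipschitz regularity of $F$ is used. On $\partial\Sigma_\lambda'$ one has $w_\lambda=0$ on $T_\lambda\cap\overline{\Sigma_\lambda'}$, while on the rest of the boundary (which lies in $\Omega_\Gamma$) we have $\psi>0$ and $\psi_\lambda=0$ coming from $\psi|_{\partial\Omega_\Gamma}=0$, so $w_\lambda\le 0$ there if $\Sigma_\lambda'\subset\Omega_\Gamma$ — but we in fact want $w_\lambda\ge 0$. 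The correct sign is obtained by noting that $\psi>0$ in $\Omega_\Gamma$ and the relevant boundary pieces are reflected portions of $\partial\Omega_\Gamma$, on which $\psi_\lambda=0\le\psi$; hence $w_\lambda\le 0$ is what the boundary gives, and one works with $w_\lambda=\psi-\psi_\lambda$ instead (signs are a bookkeeping matter). For $\lambda$ sufficiently close to $\sup_{x\in\overline{\Omega_\Gamma}} x\cdot e$, the reflected cap $\Sigma_\lambda'$ is a thin sliver contained in $\Omega_\Gamma$, and the maximum principle for narrow domains (or a standard Aleksandrov--Bakelman--Pucci estimate) gives $w_\lambda$ of the desired sign throughout $\Sigma_\lambda'$.

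Now decrease $\lambda$ to its critical value $\lambda^*$ defined as the infimum of those $\lambda$ for which both (a) $\Sigma_\mu'\subset\Omega_\Gamma$ and (b) $w_\mu$ has the correct sign hold for every $\mu\in[\lambda,\sup\, x\cdot e]$. At $\lambda=\lambda^*$ one of the following must occur: either (a) fails in the sense that $\partial\Sigma_{\lambda^*}'$ becomes internally tangent to $\partial\Omega_\Gamma$ at a point $P\notin T_{\lambda^*}$, or (b) $T_{\lambda^*}$ meets $\partial\Omega_\Gamma$ orthogonally at some point $Q$. In case (a), $w_{\lambda^*}\ge 0$ achieves an interior zero at $P$, and the strong maximum principle forces $w_{\lambda^*}\equiv 0$, so $\Omega_\Gamma$ is symmetric across $T_{\lambda^*}$. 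In case (b), apply Serrin's boundary point lemma (the corner version of the Hopf lemma) at $Q$: a non-trivial $w_{\lambda^*}$ would yield a strict sign on a suitable second directional derivative there, but the overdetermined condition $\partial_n\psi=-1$ on $\partial\Omega_\Gamma$, together with the fact that $T_{\lambda^*}$ is orthogonal to $\partial\Omega_\Gamma$ at $Q$, forces all first and second derivatives of $w_{\lambda^*}$ at $Q$ to vanish, again a contradiction unless $w_{\lambda^*}\equiv 0$.

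Either way, $\Omega_\Gamma$ is symmetric about a hyperplane with normal $e$. Since $e$ was arbitrary, applying this in every direction and using that the intersection of the symmetry hyperplanes must pass through a common point (the unique center) shows that $\Omega_\Gamma$ is a ball and $\psi$ is radially symmetric about its center. The chief technical obstacle is the corner analysis of case (b): establishing Serrin's boundary point lemma at the point where $T_{\lambda^*}$ is tangent to $\partial\Omega_\Gamma$ requires a careful construction of a barrier function in a wedge-shaped region, and it is the step that demands the analytic (or at least $C^2$) regularity of $\partial\Omega_\Gamma$ and the Lipschitz regularity of $F$ — precisely the assumptions that break down in the setting of the present paper, motivating the refined argument carried out in the sequel.
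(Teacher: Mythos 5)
The paper does not prove this statement: it is cited directly to Serrin's 1971 paper, so there is no in-house proof to compare against. Your sketch is a reasonably accurate rendering of the classical Alexandrov--Serrin moving plane argument, and the overall structure (sliding the plane, linearizing the nonlinearity into a bounded zeroth-order coefficient $c_\lambda$ using Lipschitz $F$, starting with the narrow-domain maximum principle, and then analyzing the critical position via internal tangency and the orthogonal-corner case) is exactly how the theorem is established. There is, however, a factual slip: you assert that the overdetermined condition is $\partial_n\psi=-1$ on $\partial\Omega_\Gamma$, but \eqref{SemilinearEllipticFinal} imposes $\nabla\psi=0$ on $\partial\Omega_\Gamma$, so $\partial_n\psi=0$; the nonvanishing datum here is $\partial_{nn}\psi|_{\partial\Omega_\Gamma}=1$, coming from $\Delta\psi=F(0)=1$ together with $\nabla\psi=0$ on the boundary. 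This matters in case (b) of the corner analysis: with $\partial_n\psi=0$ both $\nabla w_{\lambda^*}$ and the relevant second derivatives of $w_{\lambda^*}$ vanish at the corner point $Q$, so Serrin's boundary-point lemma must be pushed to second order, precisely as in Proposition \ref{Hopf Lemma 2} which concludes with a sign on $\partial^2_\eta h(0)$ rather than on $\partial_\eta h(0)$. Your sign bookkeeping for $w_\lambda$ is handled informally but correctly, and you rightly flag the corner barrier construction as the main technical point and as the step that breaks down when $F$ is merely continuous, which is what motivates the paper's Proposition \ref{prop:rigidity quadratic}.
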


In our case, we cannot assume that $F$ is $C^1$ in general. In fact, there are two cases.

\begin{lemma}\label{lemma: Puisuex expression quadratic}
One of the following two cases must happen.
\begin{itemize}
\item $F(s)$ is not analytic at $s=0,$ in which case $\psi$ is non-negative in $\Omega_{\Gamma}.$ This is called the unextendable case. 
\item $F(s)$ is analytic at $s=0.$ This is called the extendable case.
\end{itemize}
\end{lemma}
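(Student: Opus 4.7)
The dichotomy as stated is tautological; the real content is the implication ``$F$ is not analytic at $s=0$'' $\Rightarrow$ ``$\psi \geq 0$ on $\Omega_{\Gamma}$''. The plan is to prove the contrapositive, together with a sign determination coming from the boundary data.

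First, the boundary normalisation pins down the sign of $\psi$ near $\partial\Omega_\Gamma$. Since $\psi \equiv 0$ on $\partial\Omega_\Gamma$ the tangential second derivative vanishes on $\partial\Omega_\Gamma$, so the identity $\Delta\psi = F(\psi) = F(0) = 1$ on $\partial\Omega_\Gamma$ gives $\partial_{nn}\psi \equiv 1$ there. Combined with $\psi = \nabla\psi = 0$ on $\partial\Omega_\Gamma$, this yields $\psi(x) > 0$ in an inward tubular neighborhood of $\partial\Omega_\Gamma$. In particular, if $\psi$ is not non-negative on $\Omega_\Gamma$ then it must take strictly negative values at interior points and, being strictly positive near the boundary, $\psi$ must actually change sign on $\Omega_\Gamma$.

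Now assume, for contradiction, that $F$ is not analytic at $0$ and that $\psi$ changes sign on $\Omega_\Gamma$. Set $Z := \{\psi = 0\} \cap \Omega_\Gamma$. Then $\Omega_\Gamma \setminus Z$ is disconnected (it splits into $\{\psi > 0\}$ and $\{\psi < 0\}$, both non-empty). A discrete subset of $\mathbb{R}^2$ cannot separate a connected open set, so by the structure theorem for nodal sets of analytic functions (Proposition \ref{structure of nodal set}) the set $Z$ must contain an analytic arc $\gamma$. By the defining property of the innermost cell $\Omega_\Gamma$, the critical points of $\psi$ inside $\Omega_\Gamma$ are isolated; hence $\gamma$ cannot consist entirely of critical points, and we may pick $x_0 \in \gamma$ with $\nabla\psi(x_0) \neq 0$.

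The conclusion is then a one-variable analyticity argument. Choose any analytic curve $t \mapsto x(t)$ through $x_0 = x(0)$ with $\frac{d}{dt}\psi(x(t))\big|_{t=0} \neq 0$ (for instance, the straight line in the direction $\nabla\psi(x_0)$). The scalar function $t \mapsto \psi(x(t))$ is analytic with non-zero derivative at $0$, so by the analytic inverse function theorem it has an analytic local inverse $s \mapsto t(s)$ on a neighborhood of $s=0$. Composing, the relation $\Delta\psi = F(\psi)$ gives
\[
F(s) \;=\; \Delta\psi\bigl(x(t(s))\bigr),
\]
and the right-hand side is analytic in $s$, contradicting the assumption that $F$ is not analytic at $0$. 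This completes the contrapositive and hence the proof. The only delicate point is producing a zero of $\psi$ at which the gradient does not vanish; this is handled by combining the topological separation observation above with the construction of $\Omega_\Gamma$ as a cell free of critical curves, so that the analytic arc in $Z$ cannot be swallowed entirely by the (isolated) critical set.
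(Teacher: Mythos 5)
Your proof is correct and follows the same approach that the paper uses implicitly (the paper does not give a separate proof of this lemma but folds the argument into the surrounding discussion and the proof of Proposition~\ref{prop: rigidity in degenate case}): if $\psi$ changed sign one would find a non-critical interior zero of $\psi$, at which the straightening lemma forces $F$ to be analytic at $0$. You carry out the key step — producing a zero of $\psi$ where $\nabla\psi \neq 0$ — more carefully than the paper's terse ``infinitely many zeros, hence one is non-critical'' by passing through the topological separation observation and the nodal-set structure theorem to extract an analytic arc, which is a welcome refinement since an infinite set could in principle still be discrete.
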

\begin{proof}
{To prove the lemma, it suffices to check that $\psi$ is non-negative on $\Omega_{\Gamma}$ when $F$ is not analytic near $0$. In fact, the proof is quite similar to that in the proof of Proposition  \ref{prop: rigidity in degenate case}. Now we assume, towards a contradiction, that there is a point $x_*\in \Omega_{\Gamma}$ with $\psi(x_*)<0$. Since $\nabla \psi =0$ and $\partial_{nn} \psi =1$ on $\partial \Omega_{\Gamma}$, $\psi$ is positive in a tubular neighborhood of $\Gamma$ excluding $\Gamma$ itself. Hence, by continuity, it follows from $\psi(x_*)<0$ that there are infinitely many zeros of $\psi$ in $\Omega_{\Gamma}$. Since $F$ is not analytic at $s=0,$ the zeros of $\psi$ in $\Omega_\Gamma$ must be critical points. In particular, $\psi$ must have infinitely many critical points in $\Omega_\Gamma$. This contradicts the choice of $\Gamma$ as the innermost loop.   
}    
\end{proof}
\subsection{The Unextendable Case}\label{sec4.1}
This is the case where $F(s)$ is \emph{not} analytic in $s$ at $0$.
As mentioned in Lemma \ref{lemma: Puisuex expression quadratic}, $\psi$ must be non-negative in $\Omega_{\Gamma}$, since otherwise there would be infinitely many points where $\psi=0$ in $\Omega_{\Gamma}$. Without loss of generality, by scaling and multiplication by a constant factor, we assume that $\displaystyle \max_{\Omega_{\Gamma}}\psi =1$. Analogously as in the proof of Proposition \ref{prop: rigidity in degenate case}, the condition that $\psi$ does not have a non-isolated critical point in $\Omega_{\Gamma}$ implies that $F$ is analytic in $(0,1)$.
Our goal in this subsection is to prove the following proposition.
\begin{proposition}\label{prop:rigidity quadratic}
     Let $\Omega_{\Gamma}$ be a simply connected analytic domain, and $\psi$ be a non-negative analytic function in $\overline{\Omega_{\Gamma}}$. Assume $F\in C([0,1])$ is analytic on $(0,1).$   If $F$ is not analytic at $0$ and $\psi$ satisfies the over-determined problem \eqref{SemilinearEllipticFinal} for elliptic equation, then
     $\Omega_{\Gamma}$ is a ball and $\psi$ is radial.
\end{proposition}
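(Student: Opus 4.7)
The plan is to run Serrin's moving plane scheme \cite{serrin1971symmetry} in every direction $e \in S^{1}$ and show that $\Omega_{\Gamma}$ admits a hyperplane of symmetry orthogonal to $e$; once this holds for every direction, $\Omega_{\Gamma}$ must be a ball and $\psi$ must be radial. After fixing $e$, set $\lambda_{0} = \sup\{x\cdot e : x \in \Omega_{\Gamma}\}$, let $\Sigma_{\lambda} = \Omega_{\Gamma} \cap \{x\cdot e > \lambda\}$ be the cap, and define the reflection $R_{\lambda}(x) = x - 2((x\cdot e) - \lambda)e$. Writing $\psi_{\lambda} = \psi \circ R_{\lambda}$ and $w_{\lambda} = \psi_{\lambda} - \psi$, the goal is to establish $w_{\lambda} \geq 0$ on $\Sigma_{\lambda}$ for all $\lambda$ down to a critical value $\Lambda$, at which one then identifies the hyperplane $T_{\Lambda} = \{x \cdot e = \Lambda\}$ as a symmetry hyperplane.

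\textbf{Linearized equation and singular coefficient.} Writing $F(\psi_{\lambda}) - F(\psi) = c_{\lambda}(x)(\psi_{\lambda} - \psi)$ with the convention $c_{\lambda} = 0$ wherever $w_{\lambda} = 0$, the function $w_{\lambda}$ satisfies the linear elliptic equation $\Delta w_{\lambda} - c_{\lambda}(x) w_{\lambda} = 0$ on $\Sigma_{\lambda} \cap R_{\lambda}^{-1}(\Omega_{\Gamma})$. Since $F$ is not Lipschitz at $0$, and possibly not at $1$, the coefficient $c_{\lambda}$ need not be bounded. Near $\partial \Omega_{\Gamma}$ a Puiseux expansion of $F$ at $0$ (in the spirit of Lemma \ref{lemma:Puiseux inverse}) gives $F(s) = 1 + O(s^{1/2})$; combined with the quadratic boundary behaviour $\psi(x) \sim \tfrac{1}{2}\mathrm{dist}(x, \partial\Omega_{\Gamma})^{2}$ forced by $\psi = |\nabla \psi| = 0$ and $\partial_{nn}\psi = 1$, this yields $|c_{\lambda}(x)| \lesssim \mathrm{dist}(x, \partial \Omega_{\Gamma})^{-1}$. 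Near $\{\psi = 1\}$, the Puiseux expansion of $F$ at $1$ produces a possible singularity that I would check always carries the sign amenable to the maximum principle, as advertised in the introduction.

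\textbf{Execution of the sweep.} For $\lambda$ slightly below $\lambda_{0}$, $\Sigma_{\lambda}$ is narrow, and I would apply a narrow-domain maximum principle adapted to allow inverse-distance singular coefficients --- via an explicit supersolution built from a power of $\mathrm{dist}(x, \partial\Omega_{\Gamma})$ --- to start the sweep and obtain $w_{\lambda} \geq 0$. Let $\Lambda$ denote the infimum of $\lambda$ for which this inequality persists on every $\Sigma_{\mu}$, $\mu \in (\lambda, \lambda_{0})$. If symmetry about $T_{\Lambda}$ fails, then either $R_{\Lambda}(\Sigma_{\Lambda})$ embeds strictly inside $\Omega_{\Gamma}$, or there is a tangency of reflected boundaries. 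In the former case the strong maximum principle (valid in the interior, where $\psi > 0$ and $F$ is smooth on compacta) gives $w_{\Lambda} > 0$ on the interior, and a refined Hopf lemma at an interior touching point contradicts the minimality of $\Lambda$; in the latter case, a corner-type Serrin lemma --- again adapted to the singular coefficient --- yields the contradiction. Iterating over all $e \in S^{1}$ then produces a hyperplane of symmetry in every direction, forcing $\Omega_{\Gamma}$ to be a ball centred at the common intersection, with $\psi$ correspondingly radial.

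\textbf{Main obstacle.} The hard part will be making the refined Hopf boundary lemma and the narrow-domain maximum principle rigorous when $c_{\lambda}$ blows up like $\mathrm{dist}(x, \partial \Omega_{\Gamma})^{-1}$. My strategy is to construct explicit barriers near the boundary that exploit the quadratic lower bound $\psi \gtrsim \mathrm{dist}^{2}$, absorbing the singular term by matching powers of distance, so that $\Delta - c_{\lambda}$ still enjoys a weak maximum principle on sufficiently thin caps. Verifying the good sign of the singularity near $\psi = 1$ (the content of a claim like Lemma \ref{claim:coefficient estimate}) is the other delicate point; once both are in hand, Serrin's classical argument applies and the conclusion follows.
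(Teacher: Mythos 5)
Your proposal follows essentially the same route as the paper: run the moving plane sweep in each direction, write the linearized equation $\Delta w_\lambda = c_\lambda w_\lambda$ with an inverse-distance singular coefficient coming from the $\sqrt{s}$-type Puiseux behaviour of $F$ at $0$ combined with the quadratic lower bound $\psi \gtrsim \mathrm{dist}^2$, check that the Puiseux singularity of $F$ at $1$ has a favourable sign for the maximum principle, and close via refined Hopf lemmas on small balls/half-balls (the paper cites these from \cite{ruiz2023symmetry} rather than building barriers by hand, but the content is the same). The one point to tighten is the dichotomy at the critical level: at $\lambda = \Lambda$ the reflected cap cannot ``embed strictly inside'' --- the critical value is precisely where an internal tangency on $\partial\Omega_\Gamma$ or an orthogonal (corner) tangency of $H_\Lambda$ with $\partial\Omega_\Gamma$ first occurs, and the two Hopf-type lemmas are applied respectively to those two tangency configurations, as in Lemma \ref{lemma:interior moving} of the paper.
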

\begin{remark}
Without loss of generality, we may assume $F(0)=1,$ since the case $F(0)=0$ has  already been covered by Proposition \ref{prop: rigidity in degenate case}.
\end{remark}
 For the proof of Proposition \ref{prop:rigidity quadratic}, it suffices to prove that $\psi$ is even symmetric to all directions $e\in \mathbb{S}^1$. Without loss of generality, by rotation,  we need only to prove the symmetry with respect to $e=(1,0)$. The key is to use the moving plane method to show that $\psi$ is even symmetric to $(1,0)$. We start with a series of lemmas concerning the regularity of $F$ and the critical set of $\psi$.
First, either $F$ is analytic at $1$ or $F$ is not smooth and admits a Puiseux expression near $1$.
\begin{lemma} \label{lemma:Puiseux expression near 1}
When the maxima of $\psi$ are critical points of degree $2k_0,$ for some $k_0\geq 1,$ $F(s)$ admits the following \emph{convergent} expansion, for  $0\leq 1-s\ll 1$, 
      \begin{equation}\label{eqn:Puiseux High order}
         F(s)=\sum_{k=k_0-1}^\infty a_k(1-s)^{\frac{k}{k_0}}, \text{ with $a_k$ real and $a_{k_0-1}<0$. } 
     \end{equation}   
\end{lemma}

\begin{remark}
The sign condition $a_{k_0-1}<0$ will play an important role in the moving plane argument. 
\end{remark}

\begin{proof}
    Fix a point $x^0\in \Omega_{\Gamma}$ such that $\psi(x^0)=1$, as $\psi$ attains the global maximum at $x^0$, we have $d_\psi(x^0)$ is an even number. In the case $d_\psi(x^0)=2$, without loss of generality, by a suitable rotation and translation, we assume $x^0=0$ and $\partial _{11} \psi(0)<0$. As $\partial _1\psi(0)=0$ and $\partial _{11} \psi(0)<0$, for $0\leq 1-s\ll 1$, the function $\psi(t,0)$ near $0$ has exactly two inverses $f_{+}(s)$ and $f_{-}(s)$, with $f_{+}(s)\geq 0$ and $f_{-}(s)\leq 0$.
    
    We show that $f_+$ and $f_-$ admit a Puisuex expansion near $1$. In particular, we will show that there is an analytic function $G$, such that $f_{+}(s)=G(\sqrt{1-s})$ and $f_{-}(s)=G(-\sqrt{1-s})$. 
    Indeed, given the Taylor expression of $\displaystyle \psi(t,0)=1+a_2t^2+\sum_{n\geq 3}a_{n}t^{n}$ with $a_2<0$, we have 
\begin{equation}
    \begin{aligned}
        \psi(f_{\pm}(s),0)=s\Leftrightarrow &f_{\pm}^2(s)\left(1+\sum_{k\geq 3} \frac{a_k}{a_2}f_{\pm}^{k-2}(s)\right)=\frac{s-1}{a_2} \\
        \Leftrightarrow
        &f_{\pm}(s) \sqrt{1+\sum_{k\geq 3} \frac{a_k}{a_2}f_{\pm}^{k-2}(s)}=\pm\sqrt{\frac{s-1}{a_2}}.
    \end{aligned}
\end{equation}
Note that the function $\displaystyle s\sqrt{1+\sum_{k\geq 3} \frac{a_3}{a_2}s^{k-2}}$ is an analytic function with a non-vanishing derivative at $0$, hence it admits an analytic inverse $H$. Therefore, we can correspondingly choose $G(s)=H(\frac{s}{\sqrt{-a_2}})$ so that $f_\pm(s)=G(\pm \sqrt{1-s})$. 

With the aid of $f_{+}$, $f_{-}$, and $G$, for $0\leq 1-s\ll 1$, we have $$F(s)=\Delta\psi(G(\sqrt{1-s}),0)=\Delta\psi(G(-\sqrt{1-s}),0).$$ The fact that $\Delta\psi(G(\sqrt{1-s}),0)$ agrees with $\Delta\psi(G(-\sqrt{1-s}),0)$, implies that {all the terms consisting of odd powers of $\sqrt{1-s}$ vanish in the Puiseux expansion of $F$ near $1$.} Therefore, $F$ is analytic near $1$.

Now in the case where $d_{\psi}(x^0)=2k_0\geq 4$, again assume that $x^0=0$ and $\partial _{1}^{2k_0}\psi<0$, for $0\leq 1-s\ll 1$, the function $\psi(t,0)$ near $0$ has  exactly two inverses $f_{+}(s)$, $f_{-}(s)$. Similarly, as in the case where $d_{\psi}(x^0)=2$, it follows that $F$ near $1$ has the Puiseux expansion.  More precisely, for $0\leq 1-s \ll 1$, we have \eqref{eqn:Puiseux High order}. This finishes the proof of the lemma.
\end{proof}
Analogously, by taking a normal line segment through a boundary point $x\in \partial \Omega_{\Gamma}$, we have $F(s)$ is analytic in the variable $\sqrt{s}$ near $0$. Together with the condition that $F$ is not analytic near $0$, we have the following Puiseux expansion of $F$ near $0$.
\begin{lemma}\label{lemma: F Puiseux expression near 0}
   $F$ near $0$ admits a convergent Puiseux series expression: for $0\leq s\ll 1$, one has
\begin{equation}
F(s)=\sum_{k=0}^\infty a_ks^{\frac{k}{2}}, 
\end{equation}
where  $a_k\in \mathbb{R}$, $a_0=1$ and $a_{k_0}\neq 0$ for an odd number $k_0$.
\end{lemma}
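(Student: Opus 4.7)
The plan is to imitate the argument of Lemma \ref{lemma:Puiseux expression near 1} but anchored at a boundary point of $\Omega_\Gamma$ rather than at an interior maximum. Pick any $x^0\in\partial\Omega_\Gamma$ and translate/rotate so that $x^0=0$ with the inward unit normal equal to $(1,0)$. Since $\psi=\nabla\psi=0$ on $\partial\Omega_\Gamma$ and the problem is normalized so that $F(0)=1$, all tangential derivatives of $\psi$ at $0$ vanish to all orders, and the PDE at $0$ forces $\partial_{11}\psi(0)=\Delta\psi(0)=1$. Restricting to the normal ray, set $\eta(t):=\psi(t,0)$; then $\eta$ is real-analytic in a neighborhood of $0$ with $\eta(0)=\eta'(0)=0$ and $\eta''(0)=1>0$, so one can factor $\eta(t)=t^2\phi(t)$ with $\phi$ real-analytic and $\phi(0)=1/2$.

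Next I would take square roots: $\sqrt{\eta(t)}=t\sqrt{\phi(t)}$ is a real-analytic function of $t$ with non-vanishing derivative at the origin, so the analytic inverse function theorem furnishes an analytic germ $G$ with $G(0)=0$ satisfying $\eta(G(\sqrt{s}))=s$ for all $s\in[0,\delta)$, some $\delta>0$. Using the semilinear equation $\Delta\psi=F(\psi)$ along the normal line then yields the key identity
\[F(s)=\Delta\psi\bigl(G(\sqrt{s}),0\bigr),\qquad s\in[0,\delta).\]
Since $\Delta\psi$ is real-analytic in the spatial variables and $G$ is analytic in its argument, the right-hand side is an analytic function of $\sqrt{s}$. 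Writing out its Taylor expansion gives a convergent Puiseux series $F(s)=\sum_{k\geq 0}a_k s^{k/2}$ with real coefficients; evaluating at $s=0$ gives $a_0=F(0)=1$.

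The final step is to exhibit an odd $k_0$ with $a_{k_0}\neq 0$, and this is where the standing hypothesis that $F$ is not analytic at $0$ is used. If every odd-indexed coefficient $a_{2j+1}$ vanished, the expansion would collapse to $\sum_{j\geq 0}a_{2j}s^j$, a convergent power series in the integer variable $s$, which would make $F$ analytic at $0$ — contradicting the unextendability assumption. Hence some odd $k_0$ has $a_{k_0}\neq 0$, as required.

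I do not expect a serious obstacle here; the only mildly subtle point is verifying that the identity $F(s)=\Delta\psi(G(\sqrt{s}),0)$ actually holds on a genuine one-sided neighborhood of $0$, and not merely as a formal identity. This follows from the fact that $\eta''(0)>0$ makes $\eta$ a local homeomorphism from $[0,\epsilon)$ onto $[0,\delta)$, so every small $s\geq 0$ is actually attained as $\psi(t,0)$ for a unique $t=G(\sqrt{s})\geq 0$ on the normal segment inside $\Omega_\Gamma$, and the semilinear equation transfers pointwise to the claimed identity. Everything else is a direct transcription of the Puiseux-inversion calculation already used twice in the preceding lemma.
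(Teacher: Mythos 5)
Your proposal is correct and matches the paper's intent exactly: the paper dispatches this lemma with the single sentence ``Analogously, by taking a normal line segment through a boundary point $x\in\partial\Omega_\Gamma$, we have $F(s)$ is analytic in the variable $\sqrt{s}$ near $0$,'' and your argument is precisely the fleshed-out version of that remark, transcribing the inversion calculation of Lemma \ref{lemma:Puiseux expression near 1} from an interior maximum to a boundary point where $\psi$ vanishes quadratically in the normal direction. All the details check out: the boundary conditions $\psi=\nabla\psi=0$ kill the tangential second derivatives at $x^0$, so $\partial_{nn}\psi(x^0)=\Delta\psi(x^0)=F(0)=1$; the factorization $\eta(t)=t^2\phi(t)$ with $\phi(0)=1/2>0$ lets you take the analytic square root $t\sqrt{\phi(t)}$, invert it analytically, and pull $F$ back to an analytic function of $\sqrt{s}$; and the non-analyticity of $F$ at $0$ forces a nonzero odd-indexed coefficient. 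No gaps.
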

Moreover, arguing similarly as in the case $d_{\psi}(x^0)=2$ in the proof of Lemma \ref{lemma:Puiseux expression near 1}, the condition $F$ is not analytic at $0$ implies that $\psi$ is positive in $\Omega_{\Gamma}$.  Together with the fact \[\partial_{nn} \psi|_{\partial \Omega_{\Gamma}}=1,\] we can show that $\psi$ has a uniform lower bound in $\Omega_{\Gamma}$.
\begin{lemma}\label{Lemma:Lower bound of psi}
    There exists a constant $C>0$ such that for all $x\in \Omega_{\Gamma}$, \begin{equation}\label{eqn:Lower bound of psi}
        \frac{dist(x,\partial \Omega_{\Gamma})^2}{C}\leq \psi(x) \leq C dist(x,\partial \Omega_{\Gamma})^2.
    \end{equation}
\end{lemma}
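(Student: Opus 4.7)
The plan is to split the estimate into a near-boundary regime, handled with Fermi (boundary-adapted) coordinates, and an interior regime, handled by compactness. The upper bound is actually global and easy. Fix $x\in\Omega_\Gamma$ and a closest point $y\in\partial\Omega_\Gamma$; since $\psi(y)=0$ and $\nabla\psi(y)=0$, the second-order Taylor expansion along the segment $[y,x]$ yields
\[
\psi(x)\le \tfrac{1}{2}\|D^2\psi\|_{L^\infty(\overline{\Omega_\Gamma})}\,\mathrm{dist}(x,\partial\Omega_\Gamma)^2,
\]
and the $L^\infty$ norm is finite because $\psi\in C^\omega(\overline{\Omega_\Gamma})$.

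For the lower bound near the boundary, I would use that $\partial\Omega_\Gamma$ is analytic (Case 1 of Corollary \ref{TwoCases}): there exists $\delta_0>0$ such that the inward-normal map $\Phi(\sigma,t)=\gamma(\sigma)+t\,n(\sigma)$ is a real-analytic diffeomorphism from $\partial\Omega_\Gamma\times[0,\delta_0)$ onto the tubular neighborhood $U_{\delta_0}=\{x\in\Omega_\Gamma:\mathrm{dist}(x,\partial\Omega_\Gamma)<\delta_0\}$, and $t=\mathrm{dist}(\Phi(\sigma,t),\partial\Omega_\Gamma)$. In these coordinates, the Laplacian at the boundary reduces to $\Delta\psi|_{t=0}=\partial_{tt}\psi|_{t=0}$, because the tangential first derivatives of $\psi$ vanish on $\partial\Omega_\Gamma$ (so does the tangential Hessian of $\psi$, being the tangential Hessian of a function that vanishes identically on $\partial\Omega_\Gamma$) and $\partial_t\psi|_{t=0}=0$. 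Since $\Delta\psi|_{\partial\Omega_\Gamma}=F(0)=1$, we conclude $\partial_{tt}\psi|_{t=0}=1$, and by continuity we may shrink $\delta_0$ so that $\partial_{tt}\psi\ge 1/2$ throughout $U_{\delta_0}$. Using $\psi(\sigma,0)=\partial_t\psi(\sigma,0)=0$ and integrating twice in $t$ gives
\[
\psi(\sigma,t)=\int_0^t\!\!\int_0^s\partial_{tt}\psi(\sigma,r)\,dr\,ds\ge \tfrac{1}{4}t^2=\tfrac{1}{4}\mathrm{dist}(x,\partial\Omega_\Gamma)^2
\]
on $U_{\delta_0}$.

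For the complementary interior regime, the set $K=\{x\in\overline{\Omega_\Gamma}:\mathrm{dist}(x,\partial\Omega_\Gamma)\ge\delta_0\}$ is compact, and $\psi>0$ on $K$ by the positivity of $\psi$ in $\Omega_\Gamma$ that was established just above the lemma (from the fact that $F$ is non-analytic at $0$). Hence $\psi\ge m:=\min_K\psi>0$ on $K$. Since $\mathrm{dist}(x,\partial\Omega_\Gamma)\le D:=\mathrm{diam}(\Omega_\Gamma)$, this gives $\psi(x)\ge (m/D^2)\mathrm{dist}(x,\partial\Omega_\Gamma)^2$ on $K$. Taking $C=\max\{2\|D^2\psi\|_\infty,\,4,\,D^2/m\}$ combines the two regimes and yields \eqref{eqn:Lower bound of psi}. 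There is no real obstacle: the only non-trivial ingredient is the identification $\partial_{tt}\psi|_{\partial\Omega_\Gamma}=F(0)=1$, which follows from expanding $\Delta$ in Fermi coordinates and using the three over-determined boundary conditions $\psi=0$, $\nabla\psi=0$, together with $F(0)=1$.
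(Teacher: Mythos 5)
Your argument is correct and fills in exactly the proof the paper leaves implicit: the authors merely note that the lower bound follows from $\partial_{nn}\psi|_{\partial\Omega_\Gamma}=1$ together with positivity of $\psi$, and your Fermi-coordinate/compactness decomposition is the natural way to spell that out, while the upper bound is the routine second-order Taylor expansion. One small remark: when justifying $\|D^2\psi\|_{L^\infty(\overline{\Omega_\Gamma})}<\infty$ and the Taylor expansion along $[y,x]$, it is worth observing that for $y$ a closest boundary point to $x$ the open ball $B_{|x-y|}(x)$ lies in $\Omega_\Gamma$, so the segment stays in $\overline{\Omega_\Gamma}$ even if the domain is not convex.
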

Now we are ready to apply the moving plane method to show that $\psi$ is even symmetric in the direction $e=(1,0)$.
Without loss of generality, by translation, we fix $\displaystyle \inf_{x\in \Omega_{\Gamma}}x_1=0$. For $\lambda>0$, define
\[
H_{\lambda}:=\{x_1=\lambda\},\,\, 
\Omega_{\lambda}:=\{x_1<\lambda\}\cap \Omega_{\Gamma},\text{ and } \pi_{\lambda}(x_1,x_2):=(2\lambda-x_1,x_2). 
\]
We now reflect $\psi$ and $\Omega_\lambda$ through the hyperplane $H_{\lambda}$ as follows:  
$$\  \psi_{\lambda}(x):=\psi(\pi_{\lambda}(x)), \,\, \breve{\Omega}_{\lambda}:=\pi_{\lambda}(\Omega_{\lambda}).
$$ 
We are going to show $\psi_{\lambda}$ is always less than or equal to $\psi$ in $\breve{\Omega}_{\lambda}$ before $\partial \breve{\Omega}_{\lambda}$ touches the physical boundary $\partial \Omega_{\Gamma}$ tangentially. We first define the reflection monotonicity property for $\psi$ at level $\lambda$.
\begin{definition}
    We say $\psi$ has the reflection monotonicity property at the level $\lambda$, if $\breve{\Omega}_{\lambda}\subset \Omega_{\Gamma}$ and $\psi\geq \psi_{\lambda}$ in $\breve{\Omega}_{\lambda}$.
\end{definition}
Since $\partial \Omega_{\Gamma}$ is analytic, for $\lambda\ll 1$, $\breve{\Omega}_{\lambda}\subset \Omega_{\Gamma}$. Moreover, as $\nabla\psi=0$ and $\partial_{nn}\psi=1 $ in $ \partial \Omega_{\Gamma}$,  $\psi$ satisfies the reflection monotonicity property at the level $0< \lambda\ll 1$ . 
 We will now prove that the reflection monotonicity property persists unless $\partial \breve{\Omega}_{\lambda}$ hits $\partial \Omega_{\Gamma}$ tangentially.
\begin{lemma}\label{lemma:interior moving} 
     Define $$\lambda_0=\sup \{\lambda| \text{$\psi$ satisfies the reflection monotonicity property at level $\lambda$}\}.$$
     We have $h_{\lambda_0}=\psi-\psi_{\lambda_0} \geq 0$ in $\breve{\Omega}_{\lambda_0}$.
     Moreover, at least one of the following occurs when $\lambda=\lambda_0$. 
    \begin{itemize}
    \item{Internal tangency.} $\partial \breve{\Omega}_{\lambda_0}$ is tangent to $\partial \Omega_{\Gamma}$ at some point $x^0 \notin H_{\lambda_0}$.
    \item {Boundary tangency.}
    $\partial \breve{\Omega}_{\lambda_0}$ is tangent to $\partial \Omega_{\Gamma}$ at a point $x^0 \in H_{\lambda_0}$. In particular, the normal direction of $\partial \Omega_{\Gamma}$ at $x^0$ is $(0,1)$.
    \end{itemize}
\end{lemma}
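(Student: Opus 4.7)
The plan is to execute the standard moving plane procedure on $h_\lambda := \psi - \psi_\lambda$, adapted to the non-Lipschitz behavior of $F$ at the endpoints of the range of $\psi$. Since $\pi_\lambda$ is an isometry and $\psi$ solves \eqref{SemilinearEllipticFinal}, on $\breve{\Omega}_\lambda$ we have
\[
\Delta h_\lambda = F(\psi) - F(\psi_\lambda) = c_\lambda(x)\, h_\lambda, \qquad c_\lambda(x) := \int_0^1 F'\!\bigl(\psi_\lambda + t h_\lambda\bigr)\, dt.
\]
The boundary $\partial \breve{\Omega}_\lambda$ splits into a symmetry portion inside $H_\lambda$, where $h_\lambda \equiv 0$, and a reflected portion coming from $\partial \Omega_\Gamma$, where $h_\lambda = \psi \geq 0$ as long as $\breve{\Omega}_\lambda \subseteq \Omega_\Gamma$. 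The set $\Lambda$ of $\lambda > 0$ for which reflection monotonicity holds is non-empty (by the preceding discussion, using $\partial_{nn}\psi|_{\partial \Omega_\Gamma} = 1$ and analyticity of $\partial\Omega_\Gamma$) and is closed under continuity of $\psi_\lambda$ in $\lambda$; hence $\lambda_0 := \sup \Lambda$ is attained and $h_{\lambda_0} \geq 0$ on $\breve{\Omega}_{\lambda_0}$, proving the first assertion of the lemma.

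For the tangency alternative I would argue by contradiction: suppose at $\lambda = \lambda_0$ that $\partial \breve{\Omega}_{\lambda_0}$ meets $\partial \Omega_\Gamma$ only transversally and only at points of $H_{\lambda_0}$. The strategy is then three-fold. First, apply a strong maximum principle to $\Delta h_{\lambda_0} = c_{\lambda_0} h_{\lambda_0}$ to upgrade $h_{\lambda_0} \geq 0$ to $h_{\lambda_0} > 0$ in the interior. The alternative $h_{\lambda_0} \equiv 0$ would force $\psi$ to be symmetric across $H_{\lambda_0}$, but under the non-tangency assumption the reflected domain does not exhaust $\Omega_\Gamma$, so this symmetry is impossible. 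Second, apply a Hopf-type lemma on the symmetry piece $H_{\lambda_0} \cap \breve{\Omega}_{\lambda_0}$ to deduce the strict sign $\partial_{x_1} h_{\lambda_0} > 0$ there. Third, combine the interior positivity, the strict derivative sign on the symmetry piece, and a narrow-strip maximum principle on the thin annular region $\breve{\Omega}_{\lambda_0 + \epsilon} \setminus \breve{\Omega}_{\lambda_0}$ to show that monotonicity persists at $\lambda_0 + \epsilon$ for some $\epsilon > 0$, contradicting the maximality of $\lambda_0$.

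The main obstacle is that $c_{\lambda_0}$ is unbounded: by Lemma \ref{lemma: F Puiseux expression near 0}, $F'(s) \sim \tfrac{1}{2} a_1 s^{-1/2}$ as $s \to 0^+$, and by Lemma \ref{lemma:Puiseux expression near 1}, $F'(s) \to -\infty$ as $s \to 1^-$. The quadratic lower bound $\psi(x) \gtrsim \dist(x, \partial \Omega_\Gamma)^2$ from Lemma \ref{Lemma:Lower bound of psi} converts the first singularity into a pointwise estimate $|c_{\lambda_0}(x)| \lesssim 1/\dist(x, \partial \Omega_\Gamma)$ near the reflected boundary, which falls outside the Lipschitz hypothesis of Serrin's classical argument and so requires the refined Hopf/maximum principle indicated in the introduction, in which Lipschitz continuity of the coefficient is replaced by an integrable one-sided growth condition compatible with a $1/\dist$ singularity. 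At the other end of the range, the sign condition $a_{k_0-1} < 0$ of Lemma \ref{lemma:Puiseux expression near 1} makes $c_{\lambda_0}$ very \emph{negative} where $\psi_\lambda$ approaches $1$, which is the favorable sign for the maximum principle and costs nothing. Thus the delicate analytic step is the control of the $1/\dist$ singularity near the reflected boundary, which I expect to feed into the refined Hopf lemma quoted as Lemma \ref{claim:coefficient estimate} below.
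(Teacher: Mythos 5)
Your outline tracks the paper's proof closely in its broad strokes: $h_{\lambda_0}\geq 0$ in $\breve{\Omega}_{\lambda_0}$ by continuity of $\lambda\mapsto\psi_\lambda$, then a contradiction argument assuming neither tangency occurs, passing through strict interior positivity $h_{\lambda_0}>0$ and the Hopf derivative sign $\partial_1 h_{\lambda_0}>0$ on $H_{\lambda_0}\cap\Omega_\Gamma$, both via the refined small-ball Hopf lemma (Proposition \ref{Hopf Lemma 1}) once the one-sided bound $c_\lambda^-\leq M/\dist(x,\partial\breve{\Omega}_\lambda)$ from Lemma \ref{claim:coefficient estimate} is available. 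Two points, however, warrant correction.

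First, a sign error: from Lemma \ref{lemma:Puiseux expression near 1} the leading singular term is $a_{k_0-1}(1-s)^{(k_0-1)/k_0}$ with $a_{k_0-1}<0$, so $F'(s)\to+\infty$, not $-\infty$, as $s\to 1^-$. The useful fact is that the map $s\mapsto a_{k_0-1}(1-s)^{(k_0-1)/k_0}$ is \emph{increasing}, so its divided difference along $h_\lambda\geq 0$ is non-negative and contributes only to $c_\lambda^+$, never to $c_\lambda^-$. Your stated conclusion ("favorable sign, costs nothing") is right, but the intermediate assertion is inverted.

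Second, and more substantively, your persistence step is where the real work of the lemma lies and your plan does not address it. The paper does \emph{not} run a narrow-domain maximum principle on a strip such as $\breve{\Omega}_{\lambda_0+\epsilon}\setminus\breve{\Omega}_{\lambda_0}$. Instead it decomposes $\breve{\Omega}_\lambda$, for $\lambda$ slightly above $\lambda_0$, into three explicit regions and argues by hand. Near the two corner points $x_\lambda^1,x_\lambda^2 \in H_\lambda\cap\partial\Omega_\Gamma$ one has $h_\lambda=\nabla h_\lambda=0$ and, using the overdetermined boundary condition together with reflection symmetry, $\partial_{11}h_\lambda=\partial_{22}h_\lambda=0$ as well; the only non-vanishing second derivative is $\partial_{12}h_\lambda$, whose strict sign at $\lambda_0$ propagates by uniform continuity and yields $h_\lambda>0$ in fixed balls $B_{r_1}(x_\lambda^{1,2})$ by a second-order Taylor argument. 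Away from these balls but within a thin tube around $H_\lambda$ one uses persistence of $\partial_1 h_{\lambda_0}>\epsilon_1$, and in the bulk one uses a strict positive lower bound on $h_{\lambda_0}$. Your narrow-strip proposal glosses over the corners, which are precisely where the Hopf derivative sign degenerates and where the $1/\dist$ singularity of $c_\lambda$ is active (the corners lie on $\partial\Omega_\Gamma$); a narrow-domain principle with such a singular, corner-degenerate coefficient is not off-the-shelf, and the paper deliberately avoids it. The explicit three-region Taylor/continuity argument — made possible by the fact that $\partial_{nn}\psi=1$ on $\partial\Omega_\Gamma$ forces $\nabla\psi$ and $\nabla\psi_\lambda$ both to vanish at the corners — is the missing core of the proof.
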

\begin{center}
\begin{figure}
\includegraphics[width=6cm, height=6cm]{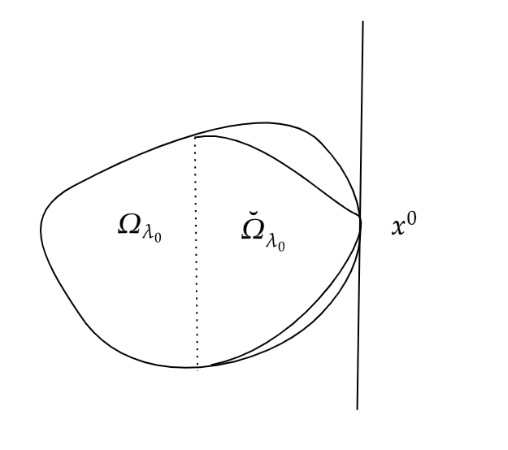} 
\includegraphics[width=6cm, height=6cm]{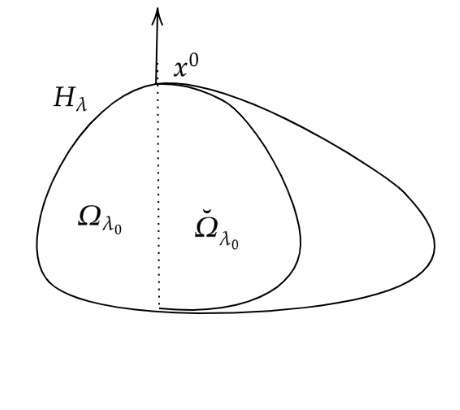} 
\captionof{figure}{Internal tangency and boundary tangency}  
\end{figure}
\end{center}
The proof of Lemma \ref{lemma:interior moving} is based on the observation that as long as $\breve{\Omega}_{\lambda}\subset \Omega_{\Gamma}$, the difference $\psi-\psi_{\lambda}$ satisfies an elliptic equation where the coefficient has a uniform estimate.
\begin{lemma}\label{claim:coefficient estimate}
    For all $\lambda>0$, such that $\breve{\Omega}_{\lambda}\subset \Omega_{\Gamma},$ $h_{\lambda}:=\psi-\psi_{\lambda}$ satisfies an elliptic equation \begin{equation}
        \Delta h_{\lambda}=c_{\lambda}(x)h_{\lambda},
    \end{equation}
    where the  coefficient $c(x)$ satisfies the following uniform estimate
    \begin{equation}\label{eqn:Upper bound singular coefficent 1}
        c_{\lambda}^{-}(x):=\max\{-c_\lambda(x), 0\}\leq \frac{M}{dist(x,\partial \breve{\Omega}_{\lambda})}.
    \end{equation} 
\end{lemma}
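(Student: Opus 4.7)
The plan is to derive the linear equation by subtracting the PDE from its reflected version, to isolate the singular contribution to the coefficient using the Puiseux expansions of $F$, and finally to convert the resulting estimate into a distance-based bound via Lemma \ref{Lemma:Lower bound of psi}.

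Since the Laplacian commutes with the reflection $\pi_\lambda$, the reflected function $\psi_\lambda$ satisfies $\Delta\psi_\lambda=F(\psi_\lambda)$ on $\breve\Omega_\lambda$, and subtracting from the equation for $\psi$ gives $\Delta h_\lambda=c_\lambda h_\lambda$ with
$$c_\lambda(x)=\frac{F(\psi(x))-F(\psi_\lambda(x))}{\psi(x)-\psi_\lambda(x)}$$
(extended arbitrarily on the zero set of $h_\lambda$). To estimate $c_\lambda^-$, I would partition the range $[0,1]$ in which $\psi,\psi_\lambda$ live into two singular zones near $0$ and $1$ together with a compact middle region on which $F$ is analytic and hence Lipschitz. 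On the compact middle, $c_\lambda$ is uniformly bounded. Near $s=1$, Lemma \ref{lemma:Puiseux expression near 1} combined with the sign condition $a_{k_0-1}<0$ forces either $F'(s)\to+\infty$ (when $k_0\ge 2$) or $F'$ bounded (when $k_0=1$); in either case this regime contributes nothing to $c_\lambda^-$. The only source of a singular coefficient is therefore the behaviour of $F$ near $s=0$.

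By Lemma \ref{lemma: F Puiseux expression near 0}, $F(s)=1+a_1\sqrt{s}+O(s)$ near $0$, with the dangerous case $a_1<0$; if instead the first nonzero odd-index coefficient occurs at $k_0\ge 3$, then $F'$ stays bounded near $0$ and nothing remains to prove. Assuming $a_1<0$, the elementary factorisation
$$\frac{F(\psi)-F(\psi_\lambda)}{\psi-\psi_\lambda}=\frac{a_1}{\sqrt{\psi}+\sqrt{\psi_\lambda}}+O(1)$$
gives $c_\lambda^-(x)\lesssim 1/(\sqrt{\psi(x)}+\sqrt{\psi_\lambda(x)})+1$. Applying Lemma \ref{Lemma:Lower bound of psi} to $\psi_\lambda(x)=\psi(\pi_\lambda x)$ yields $\sqrt{\psi_\lambda(x)}\gtrsim \dist(\pi_\lambda x,\partial\Omega_\Gamma)$, so matters reduce to proving the geometric inequality
$$\dist(\pi_\lambda x,\partial\Omega_\Gamma)\ge \dist(x,\partial\breve\Omega_\lambda).$$

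I would establish this last inequality by examining a minimising segment from $\pi_\lambda x$ to a closest boundary point $q\in\partial\Omega_\Gamma$. The segment stays in $\bar\Omega_\Gamma$, since otherwise an earlier crossing of $\partial\Omega_\Gamma$ would lie closer to $\pi_\lambda x$ than $q$. If $q_1\le\lambda$ then $\pi_\lambda q\in\partial\breve\Omega_\lambda$ and $|x-\pi_\lambda q|=|\pi_\lambda x-q|$. If instead $q_1>\lambda$, the segment necessarily crosses $H_\lambda\cap\bar\Omega_\Gamma\subset\partial\breve\Omega_\lambda$ (the inclusion using the standing hypothesis $\breve\Omega_\lambda\subset\Omega_\Gamma$) at a point strictly nearer to $\pi_\lambda x$, and reflection across $H_\lambda$ preserves its distance to $x$. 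The main obstacle is this geometric step, since $\Omega_\Gamma$ is not assumed convex; the minimality of the segment is exactly what allows us to bypass convexity, and the rest of the argument is an honest bookkeeping of the two Puiseux expansions of $F$.
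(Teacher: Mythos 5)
Your proposal follows essentially the same strategy as the paper's proof: subtract the reflected equation to get $\Delta h_\lambda=c_\lambda h_\lambda$ with the difference quotient as coefficient, use the two Puiseux expansions (Lemmas \ref{lemma:Puiseux expression near 1} and \ref{lemma: F Puiseux expression near 0}) to isolate the only possible singular contributions, observe that the sign condition $a_{k_0-1}<0$ forces the $s=1$ singularity to contribute \emph{non-negatively} to $c_\lambda$, so that the $\sqrt{s}$ term at $s=0$ is the sole source of $c_\lambda^-$, and convert $1/(\sqrt{\psi}+\sqrt{\psi_\lambda})$ into a distance bound via Lemma \ref{Lemma:Lower bound of psi}. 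Two remarks. First, the displayed identity $\frac{F(\psi)-F(\psi_\lambda)}{\psi-\psi_\lambda}=\frac{a_1}{\sqrt{\psi}+\sqrt{\psi_\lambda}}+O(1)$ is, read literally, false when $F$ is not analytic at $1$: after removing the $\sqrt{s}$ part, the remainder's difference quotient is unbounded \emph{above}, so the correct statement is a one-sided lower bound $\geq -C$ rather than a two-sided $O(1)$; since you have already recorded that the $s=1$ contribution has the favorable sign, this is a slip of notation rather than a gap, and the paper avoids it by writing the explicit decomposition $F(s)=d_1(1-s)^{(k_0-1)/k_0}+d_2\sqrt{s}+G(s)$ with $G\in C^1([0,1])$ and dropping the non-negative $d_1$-term when bounding $c_\lambda^-$. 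Second, your careful derivation of the inequality $\dist(\pi_\lambda x,\partial\Omega_\Gamma)\geq \dist(x,\partial\breve{\Omega}_\lambda)$ via a minimizing segment is a genuine addition: the paper asserts $\psi_\lambda(x)>\dist(x,\partial\breve{\Omega}_\lambda)^2/C$ directly from Lemma \ref{Lemma:Lower bound of psi} without proof, and your argument (which, as you note, sidesteps any convexity hypothesis on $\Omega_\Gamma$ by using minimality of the segment) supplies exactly what is implicitly used there.
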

\begin{proof}
    Due to the reflection symmetry of the semilinear elliptic equation, $\Delta\psi_{\lambda}=F(\psi_{\lambda})$, and thus \begin{equation}
    \Delta h_{\lambda}=\frac{F(\psi)-F(\psi_{\lambda})}{\psi-\psi_{\lambda}}h_{\lambda}:=c_{\lambda}(x)h_{\lambda}.
\end{equation} 
\par In the case where $F$ is analytic at $1$, as $F$ is already analytic in $(0,1)$, from  Lemma \ref{lemma: F Puiseux expression near 0}, there exists a function $G \in C^1([0,1])$ and a constant $d$ such that $F(s)=d\sqrt{s}+G(s)$ ($d$ might be zero, which corresponds to the case where the Puiseux expansion of $F$ at $0$ has no $\sqrt{s}$ term). Now we have \begin{equation}
    c_{\lambda}=\frac{d}{\sqrt{\psi_{\lambda}}+\sqrt{\psi}}+\frac{G(\psi)-G(\psi_{\lambda})}{\psi-\psi_{\lambda}}
\end{equation}
It follows from the lower bound of $\psi$ in the estimate \eqref{eqn:Lower bound of psi} that one has \begin{equation}
    \psi_{\lambda}\geq \frac{dist(x,\partial \breve{\Omega}_{\lambda})^2}{C}.
\end{equation}
This, together with the fact that $\psi$ and $\psi_{\lambda}$ enjoy a uniform $C^1$ upper bound, yields $|c_{\lambda}|\leq \frac{M}{dist(x,\partial \breve{\Omega}_{\lambda})}$, where $M>0$ is \emph{independent} of $\lambda$.
\par In the case where $F$ is not analytic near $1$, since $F$ is analytic in $(0,1)$, Lemmas \ref{lemma:Puiseux expression near 1} and \ref{lemma: F Puiseux expression near 0} show that there exist an integer $k_0\geq 2$, a negative constant $d_1<0$, a constant $d_2$, and a function $G\in C^{1}([0,1])$, such that 
\[
F(s)=d_1(1-s)^{\frac{k_0-1}{k_0}}+d_2\sqrt{s}+G(s).
\]
We now have the corresponding expression for $c_{\lambda}$
\begin{equation}\label{eqn:expression of c}
    c_{\lambda}=d_1\frac{(1-\psi)^{\frac{k_0-1}{k_0}}-(1-\psi_{\lambda})^{\frac{k_0-1}{k_0}}}{\psi-\psi_{\lambda}}+\frac{d_2}{\sqrt{\psi_{\lambda}}+\sqrt{\psi}}+\frac{G(\psi)-G(\psi_{\lambda})}{\psi-\psi_{\lambda}}.
\end{equation}
 As $d_1<0$, the term $d_1\frac{(1-\psi)^{\frac{k_0-1}{k_0}}-(1-\psi_{\lambda})^{\frac{k_0-1}{k_0}}}{\psi-\psi_{\lambda}}$ is non-negative. Hence, similar to the case where $F$ is analytic at $1$,  we have $c_{\lambda}^{-}\leq \frac{M}{dist(x,\partial \breve{\Omega}_{\lambda})}$, for a positive $M$ independent of $\lambda$.
\end{proof}
We now prove Lemma \ref{lemma:interior moving} by the contradiction argument.
\begin{proof}[Proof of Lemma \ref{lemma:interior moving}]
  Due to the continuity of $\psi$, $h_{\lambda_0}:=\psi-\psi_{\lambda_0}\geq 0$ in $\breve{\Omega}_{\lambda_0}$. Now we assume that at the level of $\lambda_0$, none of the internal tangency or boundary tangency occurs, in particular, there is a $\delta^*>0$, such that for $\lambda_0\leq \lambda<\lambda_0+\delta^*$, $\breve{\Omega}_{\lambda}\subset \Omega_{\Gamma}$. We will show that there is a $\delta \in(0,\delta^*)$ such that $\psi$ satisfies the reflection monotonicity at level $\lambda\in[\lambda_0,\lambda_0+\delta]$.
  
  We start by proving that $h_{\lambda_0}$ is positive in $\breve{\Omega}_{\lambda_0}$. By Lemma \ref{claim:coefficient estimate}, $h_{\lambda_0}$ is a nonnegative solution to 
  \begin{equation}\label{problemlambda0}
  \Delta h_{\lambda_0}=c_{\lambda_0}(x)h_{\lambda_0}\,\, \text{with}\,\, c_{\lambda_0}^{-}\leq \frac{M}{dist(x,\partial \breve{\Omega}_{\lambda_0})}.
  \end{equation}
  Now suppose that there is a point $x^0\in \breve{\Omega}_{\lambda_0}$ such that $h_{\lambda_0}(x^0)=0$, we automatically have $\nabla h_{\lambda_0}(x^0)=0$. Then the Proposition \ref{Hopf Lemma 1} in the appendix shows that in a small ball whose boundary contains $x^0$, $h_{\lambda_0}$ is identically zero. Due to the analytic regularity of $\psi$, it further implies that $\psi_{\lambda_0}=\psi$. Hence $\Omega_{\Gamma}$ is even symmetric with respect to $H_{\lambda_0}$ and we get a contradiction.
 Therefore, it holds that 
 \begin{equation}\label{eqn: oth bound}
      h_{\lambda_0}(x)>0, \text{ for all $x\in \Breve{\Omega}_{\lambda_0}$.}
  \end{equation}
  
  Analogously, for any point $x\in H_{\lambda_0}\setminus \partial \Omega_{\Gamma}$, we may apply Proposition \ref{Hopf Lemma 1} to a small ball that hits the  boundary of $\partial\breve{\Omega}_{\lambda_0}$ tangentially at $x$, and conclude that \begin{equation}\label{eqn:linear bound}
      \partial_{1}h_{\lambda_0}(x)>0, \text{ for all $x\in H_{\lambda_0} \setminus \partial \Omega_{\Gamma}$},
  \end{equation} 
  unless $h_{\lambda_0}\equiv 0.$
  
  In the rest of the proof, we will show that there is a $\delta>0$ such that $\psi$ satisfies the reflection monotonicity property at level $\lambda\in [\lambda_0,\lambda_0+\delta].$
First, as $\breve{\Omega}_{\lambda_0}$ doesn’t hit the boundary $\partial \Omega_{\Gamma}$ tangentially, for all $\lambda\in [\lambda_0,\lambda_0+\delta^*]$, $ H_{\lambda}\cap \partial \Omega_{\Gamma}$ consists of two points $x_{\lambda}^{1}$,  $x_{\lambda}^{2}$($x_{\lambda}^{1}$ is above $x_{\lambda}^{2}$). Moreover, {as $\breve{\Omega}_{\lambda_0}$ doesn’t hit the boundary $\partial \Omega_{\Gamma}$ tangentially at the points $x_{\lambda}^{1}$ and $x_{\lambda}^{2}$, the condition that $\partial_{nn}\psi =1, \psi_{tt}\psi=\psi_{tn}=0$ ($t,n$ here represent the tangent and normal direction, respectively) for all points in $\partial \Omega_{\Gamma}$ implies that } there is an $\epsilon_0>0$ such that 
\[
\partial _{12}h_{\lambda_0}(x_{\lambda_0}^1)<-\epsilon_0\,\,\text{and}\,\, \partial _{12}h_{\lambda_0}(x_{\lambda_0})(x_{\lambda_0}^2)>\epsilon_0.
\]
As $\partial_{12}h_{\lambda}(x_{\lambda}^1)$ and $\partial_{12}h_{\lambda}(x_{\lambda}^2)$ are uniformly continuous in $\lambda$, there exist a  $\delta_1>0$  such that \begin{equation}\label{eqn: quadratic1 endpoint}
\begin{aligned}
     \partial _{12}h_{\lambda}(x_{\lambda}^1)<-\frac{1}{2}\epsilon_0 \,\,\text{and}\,\,\partial _{12}h_{\lambda}(x_{\lambda}^2)>\frac{1}{2}\epsilon_0, \text{ for $\lambda_0\leq \lambda\leq \lambda_0+\delta_1$}.
\end{aligned}
\end{equation}
As $x_{\lambda}^1$ is a critical point of $\psi$ and $\psi_\lambda$, we have 
\begin{equation}\label{eqn: linear endpoint}
    \nabla h_{\lambda}(x_{\lambda}^{1})=0.
\end{equation} 
Moreover, by symmetry, it holds that
\begin{equation}\label{eqn:quadratic endpoint 2}
    \partial_{11}h_{\lambda}(x_{\lambda}^{1})=\partial_{22}h_{\lambda}(x_{\lambda}^{1})=0.
\end{equation}
 Due to the smoothness of $\partial \Omega_{\Gamma}$ and that the boundary tangency doesn't occur when $\lambda=\lambda_0$, there exists an $\epsilon>0$, such that the direction $(1,-\epsilon)$ is always pointing to the exterior of $ \Breve{\Omega}_{\lambda}$ at $x_{\lambda}^1$ for all $\lambda\in [\lambda_0,\lambda_0+\delta]$(we may decrease the value of $\delta_1$ when it is necessary). Now
 since $\partial^3h_{\lambda}$ is uniformly bounded, from \eqref{eqn: quadratic1 endpoint}, \eqref{eqn: linear endpoint}, and \eqref{eqn:quadratic endpoint 2},  there exists an $r_1>0$ such that  $h_{\lambda}>0$ in $B_{r_1}(x_{\lambda}^1)$ for $\lambda\in [\lambda_0,\lambda_0+\delta_1]$. 
 Analogously,  $h_{\lambda}\geq 0$ in $B_{r_1}(x_{\lambda}^2)$ for $\lambda\in [\lambda_0,\lambda_0+\delta_1]$  (we decrease the value of $\delta_1$ when necessary).
 It follows from the estimate \eqref{eqn:linear bound} that there is an $\epsilon_1>0$ such that we have $\partial_1 h_{\lambda_0}(x)>\epsilon_1$ for all $x\in H_{\lambda_0} \setminus (B_{r_1}(x_{\lambda_0}^1)\cup B_{r_1}(x_{\lambda_{0}}^2))$. Similar to  the previous case,  we can find  $r_2>0$ and  $\delta_2 \in(0,\delta_1)$, such that for all $\lambda\in (\lambda_0, \lambda_0+\delta_2)$, $h_{\lambda}\geq 0$ in the set 
 \begin{equation*}
 T_{\lambda}:=\{x|x\in \Breve{\Omega}_{\lambda},\, dist(x,H_{\lambda})<r_2\} \setminus (B_{r_1}(x_{\lambda}^1)\cup B_{r_1}(x_{\lambda}^2)).
\end{equation*}

 In summary, we show there are a uniform $r>0$ and $\delta>0$, such that if $\lambda_0\leq  \lambda\leq \lambda_0+\delta$, we have $h_{\lambda}\geq 0$ in $N_{\lambda}:=\{x|x\in \breve{\Omega}_{\lambda}, dist (x,H_{\lambda})<r\}$, which is a tubular neighborhood of $H_{\lambda}$.
 Note by \eqref{eqn: oth bound} and Lemma \ref{Lemma:Lower bound of psi}, there is an $\epsilon_2>0$, such that $h_{\lambda_0}>\epsilon_2$ on the set $\Breve{\Omega}_{\lambda_0}\setminus N_{\lambda_0}$. Now by the uniform continuity of $h_{\lambda}$, we may decrease the value of $\delta$ (still make it positive), such that $h_{\lambda}>\frac{\epsilon_2}{2}$ on the set $\Breve{\Omega}_{\lambda}\setminus N_{\lambda}$ for all $\lambda_0\leq \lambda\leq \lambda_0+\delta$.
 \end{proof}
Now we apply Hopf's lemma to prove that $\psi$ and $\Omega_{\Gamma}$ are even symmetric to the $(1,0)$ direction to finish the proof of Proposition \ref{prop:rigidity quadratic}.
 \begin{proof}[Proof of Proposition \ref{prop:rigidity quadratic}]
      Lemma \ref{lemma:interior moving} shows there exists a $\lambda_0>0$ with $h_{\lambda_0}\geq 0$ in $\breve{\Omega}_{\lambda_0}$ and at least one of the internal boundary tangency and boundary tangency happens for 
      $ \breve{\Omega}_{\lambda_0}$.
     In the case where internal tangency occurs at $x^0\in \partial \Omega_{\Gamma}$, by Lemma \ref{claim:coefficient estimate}, $h_{\lambda_0}$ is a non-negative solution of the equation \eqref{problemlambda0}.
      Moreover, it holds that
      \[
      \nabla h_{\lambda_0}(x^0)=\nabla \psi(x^0)-\nabla \psi_{\lambda_0}(x^0)=0.
      \]
      Now we apply Proposition \ref{Hopf Lemma 1} for a sufficiently small ball $B\subset \breve{\Omega}_{\lambda_0}$ that hits $\breve{\Omega}_{\lambda_0}$ internal-tangentially at $x^0$, and conclude $h_{\lambda_0}=0$. Hence $\psi$ and correspondingly $\Omega_{\Gamma}$ is even symmetric in the direction $(1,0)$.
      
     Now we consider the case where the boundary of tangency happens at $x^0\in H_{\lambda_0}\cap \partial \Omega_{\Gamma}$. First, similar to above, one has $\nabla h_{\lambda_0}(x^0)=0$. In addition, $\partial \breve{\Omega}_{\lambda_0}$ forms a right angle at $x^0$, and the normal direction of $\partial \Omega_{\Gamma}$ at $x^0$ is parallel to direction $(0,1)$. In particular, we have $\nabla ^2 h_{\lambda_0}(x^0)=0$. Now analogously as in the case of the internal tangency, by applying Proposition \ref{Hopf Lemma 2} in the appendix to a small half ball contained in $\Breve{\Omega}_{\lambda_0}$ that hits $\Omega_{\Gamma}$ boundary-tangentially, we have $h_{\lambda_0}=0$. Hence $\psi$ and correspondingly $\Omega_{\Gamma}$ is even symmetric in the direction $(1,0)$. 
 \end{proof}

 
\subsection{The proof of Theorem \ref{MainTheorem}}\label{section:finish proof}
{In this section, we finish the proof of Theorem \ref{MainTheorem}.}

{We first introduce the following notation.
	\begin{definition}
		Given a set $U\subset \mathbb{R}^2$ and a function $\psi\in C^2(U)$, if there exists a continuous function $F$ such that $\Delta\psi=F(\psi)$ in $U$, then $\psi$ is said to be realizable by $F$ in $U$, or we abbreviate as $\psi$ is realizable in $U$. 
	\end{definition}
}

{If $F$ is extendable in $\Omega_\Gamma$ as in Lemma \ref{lemma: Puisuex expression quadratic}, since $\Delta\psi -F(\psi)$ is an analytic function in a tubular neighborhood of $\partial \Omega_{\Gamma}$ and $\Delta\psi -F(\psi)=0$ in $\Omega_{\Gamma}$, $\psi$ must be realizable  by $F$ in a domain containing $\Omega_\Gamma$.}
	Furthermore,  we have the following proposition.

{\begin{proposition}\label{Cor: innermost critical loop}
		Assume that $U$ is a simply connected bounded open set, $\psi\in C^\omega(\bar{U})$ is constant on ${\partial U}$. {Assume $\psi$ is realizable by $F$ in $U$} and $F$ is not analytic at $c:=\psi (\partial U)$, then $\psi$ is radially symmetric.
	\end{proposition}
}
\begin{proof}
{Without loss of generality, we assume $c=0$.}
{Since $F$ is not analytic at $0$, {$\partial U$} consists of critical points of $\psi$.
{By Proposition \ref{structure of nodal set}, we can find an innermost loop $\Gamma_{0}$} of critical points of $\psi$ where $F$ loses analyticity (i.e., $F$ is analytic at the values of $\psi$ in every {curve} of critical points contained in the open region enclosed by $\Gamma_0$). {Now $\psi$ must be single-signed in the region $\Omega_{\Gamma_0}$ which is enclosed by $\Gamma_0$. Otherwise, there exist two points $x^0,x^1\in \Omega_{\Gamma_0}$ such that $\psi(x^0)<0$ and $\psi(x^1)>0$. By the continuity of $\psi$, there is a zero point of $\psi$ in every continuous curve that connects $x^0$ and $x^1$ and is contained in $\Omega_{\Gamma_0}$. In particular, $\{\psi =0\}$ has infinitely many points in $\Omega_{\Gamma_0}$ and there exists a curve $\Gamma_1 $ consisting of zeros of $\psi$. Since $F$ is not analytic at $0$, $\Gamma_1$ must be a curve of critical points. This contradicts the definition of $\Gamma_0$.} Analogously as in Section \ref{sec3.2} and Section \ref{sec4.1}, applying Proposition \ref{prop: rigidity in degenate case} or Proposition \ref{prop:rigidity quadratic} depending on the degree of vanishing of $\psi$ in $\Gamma_{0}$,  $\psi$ must be a radially symmetric function.}
\end{proof}

To prove Theorem \ref{MainTheorem}, we need the following important lemma.
{\begin{lemma}\label{lemma:realizable}
		Assume that $U$ is a simply connected bounded open set, $\psi\in C^\omega(\bar{U})$  is constant on $\partial U$, and $\{\psi,\Delta\psi\}=0$ in $U$. If $\psi$ is neither radially symmetric nor realizable in $U$,  then there exists a simply connected domain $\tilde{U} \varsubsetneq U$ such that $\nabla\psi=0$ on $\partial\tilde{U}$ and $\psi$ is not realizable in $\tilde{U}$. 
	\end{lemma}
}
\begin{proof}
{{Suppose that there exists an $x\in U$ such that $\nabla \psi(x)\neq 0$, otherwise, $\psi$ is constant  and thus realizable in $U$.}
		 Then, by Lemma \ref{lemma:function relation bracket case}, $\psi$ is realizable by some $F$ in $B_\epsilon(x)$ for some small $\epsilon.$ Now let $U_{0}$ be the largest connected set containing $B_{\epsilon}(x)$, in which $\psi$ is realizable. By analyticity of $\psi$, $\psi$ is relizable by $F$ in $U_0$. Since $\psi$ is not realizable in $U$, $U_0$ must be a proper subset of $U$.
	}
	
{We claim that $U_0$ cannot be simply connected. Indeed, if $U_0$ were simply connected, then  $\partial U_0$ is a loop of critical points of $\psi$ and that $F$ is not analytic at $c=\psi(\partial{U}_0)$. It follows from Proposition \ref{Cor: innermost critical loop} that $\psi$ must be radially symmetric. This leads to a contradiction.
	}

{{We now take $\tilde{U}$ to be a simply-connected component of the set $\Omega\setminus U_0$. $\tilde{U}$ would be a simply connected domain, and the boundary $\tilde{\Gamma}$ is a piesewise analytic curve. Moreover, assume $\psi$ obtain  value $\tilde c$ at $\Tilde{\Gamma}$, then the $F$ ($F$ is previously defined in the range of $\psi$ in $\overline{U_0}$ and we have $\Delta \psi =F(\psi)$ in $\overline{U_0}$.)  is not analytic at the value $\tilde c$.} Now, suppose $\psi$ were realizable in $\tilde{U}$, say by $G$. We claim that $G$ is not analytic at $\tilde c.$ Otherwise, let $\tilde{G}$ be the analytic extension of $G$. As $\Delta \psi -\tilde{G}(\psi)$ is an analytic function in a tubular neighborhood $\mathcal{N}$ of $\tilde{\Gamma}$ and $\Delta \psi -\tilde{G}(\psi)=0 $ in $\tilde{U}$, we have $\Delta \psi -\tilde{G}(\psi)=0$ in $\mathcal{N}$. This, in particular, implies that $F$ is analytic at $\tilde{c}$, which leads to a contradiction. Since $G$ is not analytic at $\tilde{c}$, we could apply Proposition \ref{Cor: innermost critical loop} to $\tilde{U}$ and conclude that $\psi$ is radially symmetric. Therefore, $\psi$ is not realizable in $\tilde{U}$. Clearly, by definition,  $\tilde{U}\varsubsetneq U$.  This completes the proof of the lemma.
	}
\end{proof}
{Based on Lemma \ref{lemma:realizable}, we give the proof of Theorem \ref{MainTheorem}. 
}
\begin{proof}[Proof of Theorem \ref{MainTheorem}]
{Suppose $\psi$ is neither radially symmetric nor realizable in $\Omega_0:=\Omega$, applying Lemma \ref{lemma:realizable} iteratively, we obtain a nested family of simply connected domains $\{\Omega_{n}\}_{n=1}^\infty$ such that for $n\geq 1$, $\Omega_{n}\subsetneq \Omega_{n-1}$, $\psi$ is not realizable in  $\Omega_{n}$, and $\nabla \psi =0$ on $\partial \Omega_{n}$. This, in particular, implies that there is an infinite family of nested loops of critical points for the analytic function $\psi$. It follows from Proposition \ref{structure of nodal set} that $\psi$ is a constant function. This contradiction implies that $\psi$ must be radially symmetric or realizable in $\Omega$. 
}
\end{proof}
\subsection{Counterexample}\label{sec: counterexample}
It is important to note that the rigidity in Theorem \ref{MainTheorem} is optimal in terms of regularity in bounded domains. First, the results of \cite{enciso2024smooth} show that Theorem \ref{MainTheorem} does not hold for flows that have finite smoothness, even for the weaker assumption of non-locally-radial flows. Secondly, Theorem \ref{MainTheorem} does not hold in the case where the compact domain $\Omega$ is not simply connected, as the following result shows. 
\begin{proposition}\label{main3}
Fix an analytic Jordan curve $\gamma\subset \mathbb{R}^2$. Then there exist a tubular neighborhood $\Omega:=\Omega^{int} \cup \gamma \cup\Omega^{ext}$ of $\gamma$ and a stream function $\psi\in C^\omega(\bar\Omega)$, which is constant on $\partial\Omega$ and solves the following problem. 
\[ \begin{cases}
\Delta \psi=2+\psi^{\frac{5}{2}} \text{ in }\Omega^{int},\\
              \Delta \psi=2-\psi^{\frac{5}{2}} \text{ in } \Omega^{ext},
              \end{cases} \text{ and }\psi=\frac{\partial \psi}{\partial n}=0 \text{ on }\gamma.  
\]
\end{proposition}

Thus, any non-circular $\gamma$ gives rise to a steady Euler flow $\psi$,  that is analytic in a neighborhood of $\gamma$, and  that is neither radial nor a solution of a semilinear elliptic equation of the form \eqref{SemilinearElliptic}. 

{In order to prove Proposition~\ref{main3}, we construct an analytic function $\psi$ in a tubular neighborhood $\overline{\Omega^{int} \cup \Omega^{ext}}$ of $\gamma$ satisfying 
\[
\Delta \psi = 2 + \psi^{\frac{5}{2}} \quad \text{in } \Omega^{int}.
\]
The construction of such an analytic function is inspired by the classical proof of the Cauchy--Kovalevskaya Theorem, where we apply the so-called majorization method to a carefully chosen majorizing system, so that the existence of the analytic solution is established.}

{
Once such an analytic function $\psi$ is obtained, we can verify that $\{\psi,\Delta \psi\}=0$ in $\Omega^{ext}$. Analogous to the proof of Lemma \ref{lemma:Puiseux expression near 1}, we can then prove
\[
\Delta \psi = 2 - \psi^{\frac{5}{2}} \quad \text{in } \Omega^{ext}.
\]  A detailed proof can be found in Sections~2.4.3--2.4.5 of the thesis of the second-named author \cite{yupeithesis}.
}

\section{Conclusion and Future Directions}

We gather here a few interesting directions and problems that can be considered in view of Theorem \ref{MainTheorem} and its proof. We have already mentioned that replacing the analyticity assumption with some other less stringent assumption would be desirable. We mention some other interesting problems below. 

\subsection{Local and Global Questions}
Given Theorem \ref{MainTheorem} and the extremely large literature on semilinear elliptic equations, an important problem is to study the local and global properties of the set of Euler steady states. As mentioned in the introduction, the invertibility properties of the linear Schr\"odinger operator \[\mathcal{L}=\Delta-F'(\psi_*)\] play an important role in the local structure of steady states (see \cite{choffrut2012local,constantin2021flexibility,coti2023stationary,danielski2024complex, lin2011inviscid}). One consequence of Theorem \ref{MainTheorem} and the proof is that we can always define $\mathcal{L}$ as a (unbounded) self-adjoint operator on $L^2$. This gives hope for a general local understanding of the manifold of steady states and then possibly a global one.

Most of the previous results on the local structure of steady states have assumed the invertibility of $\mathcal{L}$. It is an interesting problem to study the general case where $\mathcal{L}$ has a kernel. From the classical Fredholm theory, we know that the kernel is finite-dimensional. Once $\mathcal{L}$ has a kernel, the local structure of steady states may be quite complicated \cite{coti2023stationary}.  It is an interesting problem to study this in general, and this is related to the (im)possibility of the existence of isolated states \cite{drivas2023singularity}. If we had a good understanding of the local structure of steady states in the general case, we could hope to patch together this information to get more global information about the structure of the set of analytic Euler steady states. Some of these issues could be simpler to study in generic domains \cite{uhlenbeck1976generic}. 
 In general, it would be interesting to have results related to the connectedness, path-connectedness, and smoothness of the manifold of steady states (see \cite{danielski2024complex,danielski2024analytic}).
 
\subsection{Rigidity Questions on Other Domains}
 A first obvious question is whether similar results can be established in different domains. We have already shown that it does not hold in some annular domains, although it is not clear whether a modification holds on $\mathbb{T}\times [0,1],$ for example. An interesting problem is to determine whether a version of Theorem \ref{MainTheorem} can hold on $\mathbb{T}^2.$ In that case, the question would be whether all analytic steady states are necessarily either a shear flow or a solution to a semilinear elliptic equation. 

\subsection*{Acknowledgements}

T. M. E. acknowledges partial funding from the NSF DMS-2043024, the Alfred P. Sloan foundation, and the Simons Foundation. He also acknowledges helpful conversations with T. Drivas and A. Kiselev. Y. H. acknowledges partial funding from NSF DMS-2043024. A. R. S. is supported by the ERC/UKRI advanced grant SWAT. 
The research of  C. X. is partially supported by  NSFC grant 12250710674, and Program of Shanghai Academic Research Leader 22XD1421400.
 The authors thank the referees for many helpful comments that significantly improved the presentation of this paper.
\appendix

\section{Hopf Lemmas for Elliptic Equations with Singular Coefficients}
In this section, we list some known facts about the maximum principle for solutions to the elliptic equation $-\Delta h+c(x)h=0$ in a Lipschitz domain $\Omega$. Here $c(x)$ could be unbounded but satisfies the following uniform upper bound $
   c^{-}(x)\leq \frac{M}{dist(x,\partial\Omega)}$.
The results we present here are crucial in our proof of Proposition \ref{prop:rigidity quadratic}.
\begin{proposition}[The Hopf Lemma in small balls, see Proposition 4.3 in \cite{ruiz2023symmetry}]\label{Hopf Lemma 1}
    We consider a non-negative smooth solution $h$ to the following elliptic equation in the ball $B_{r}(0,r):$ \begin{equation}
       -\Delta h+ c(x)h =0.
   \end{equation} Now, if we further assume $h(0)=0$, there exists a $r_0>0$ depending on $M$, such that if $r<r_0(M)$, we have either $h=0$ or $\frac{\partial h}{\partial x_2}(0)>0$. 
\end{proposition}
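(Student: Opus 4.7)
The plan is to run the classical Hopf lemma argument with a Gaussian barrier on an annulus, with the new twist that the potential $c^-$ has a $1/d$ singularity at the outer boundary; this singular term is controlled by taking $r$ small in terms of $M$.

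First, by the strong maximum principle for $-\Delta+c$, which is available for $c^-$ of the form considered (the $1/d$ weight lies in the appropriate Hardy class so that the associated quadratic form is bounded below once $r$ is small), one obtains the dichotomy that either $h\equiv 0$ on $B_r((0,r))$, in which case we are done, or $h>0$ throughout the open ball. In the latter case set $q:=(0,r)$ and $m:=\min_{|x-q|=r/2}h>0$, and work on the annulus $A:=B_r(q)\setminus\overline{B_{r/2}(q)}$.

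On $A$ I would introduce the standard Gaussian barrier $w(x):=e^{-\alpha|x-q|^2}-e^{-\alpha r^2}$ with $\alpha$ to be tuned, and a direct computation yields
\[
-\Delta w+cw=e^{-\alpha|x-q|^2}\bigl(4\alpha-4\alpha^2|x-q|^2\bigr)+c(x)\bigl(e^{-\alpha|x-q|^2}-e^{-\alpha r^2}\bigr).
\]
Since $|x-q|\geq r/2$ on $A$, the first term is at most $(4\alpha-\alpha^2 r^2)e^{-\alpha|x-q|^2}$, strongly negative for $\alpha r^2\gg 1$. For the second term, the mean value theorem gives the pointwise bound $e^{-\alpha|x-q|^2}-e^{-\alpha r^2}\leq 2\alpha r\,d(x)\,e^{-\alpha|x-q|^2}$ with $d(x):=r-|x-q|$, and the hypothesis $c^-(x)\leq M/d(x)$ then yields $c^-w\leq 2\alpha r M\,e^{-\alpha|x-q|^2}$. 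Choosing $\alpha r^2$ large depending only on $M$ (hence $r<r_0(M)$) absorbs this contribution into the first term and gives $-\Delta w+cw\leq 0$ pointwise on $A$; this is exactly where the smallness hypothesis on $r$ is used.

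Finally, I would set $\epsilon:=m/(e^{-\alpha r^2/4}-e^{-\alpha r^2})$ and $v:=h-\epsilon w$. By construction $-\Delta v+cv\geq 0$ on $A$, while $v\geq 0$ on both components of $\partial A$ (on the outer sphere because $w=0$ and $h\geq 0$; on the inner sphere by the definition of $\epsilon$). The weak maximum principle for $-\Delta+c$ with $c^-\leq M/d$ and $r<r_0(M)$, which is the content of Proposition 4.3 of \cite{ruiz2023symmetry}, then gives $v\geq 0$ on $A$, i.e.\ $h\geq\epsilon w$ in a neighborhood of the origin. Since $h(0)=w(0)=0$ and the interior unit normal to $\partial B_r(q)$ at $0$ is $e_2$, differentiating along the $x_2$ axis gives
\[
\partial_{x_2}h(0)\;\geq\;\epsilon\,\partial_{x_2}w(0)\;=\;\epsilon\cdot 2\alpha r\,e^{-\alpha r^2}\;>\;0.
\]
The main obstacle, and the reason the smallness assumption on $r$ cannot be dropped, is securing both the pointwise inequality $-\Delta w+cw\leq 0$ and the weak maximum principle in the presence of the $1/d$ singularity of $c^-$; once the appropriate Hardy/absorption estimate is in place, the rest of the argument is standard Hopf.
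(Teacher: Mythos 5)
The paper does not actually prove this statement---it is quoted verbatim as Proposition~4.3 of \cite{ruiz2023symmetry} and used as a black box---so there is no in-paper proof to compare against; I am therefore assessing your argument on its own.

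Your overall structure (strong maximum principle to get $h>0$, then a Gaussian barrier $w=e^{-\alpha|x-q|^2}-e^{-\alpha r^2}$ on the annulus $A=B_r(q)\setminus\overline{B_{r/2}(q)}$, then comparison) is the standard Hopf scheme, and the boundary estimates, choice of $\epsilon$, and the final differentiation at $0$ are all fine. The gap is in the barrier inequality. You want $-\Delta w+cw\leq 0$ on $A$, and you control the term $cw$ only through the bound $c^-w\leq 2\alpha r M\,e^{-\alpha|x-q|^2}$. But since $w\geq 0$, the estimate on $c^-w$ bounds the \emph{favourable} contribution $-c^-w$ and hence gives a \emph{lower} bound on $cw$, whereas what you need is an \emph{upper} bound, i.e.\ control of $c^+w$. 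The hypothesis $c^-\leq M/\mathrm{dist}(\cdot,\partial\Omega)$ gives no control whatsoever on $c^+$, so the claimed inequality $-\Delta w+cw\leq 0$ does not follow, and the ``absorption'' step is vacuous. This is not a cosmetic issue: taking $q=(0,r)$, $h(x)=x_2^2$ solves $-\Delta h+ch=0$ in $B_r(q)$ with $c=2/x_2^2>0$ smooth there, $h\geq 0$, $h(0)=0$, $h\not\equiv 0$, yet $c^-\equiv 0$ and $\partial_{x_2}h(0)=0$; so with a one-sided bound on $c^-$ alone the conclusion simply cannot be reached by any argument. What you actually need (and what one has in the paper's application, see Lemma~\ref{claim:coefficient estimate}) is a two-sided bound of the form $|c|\leq M/\mathrm{dist}(\cdot,\partial\Omega)$ near the boundary, in which case your mean-value-theorem estimate applies to $|c|\,w$ and the absorption genuinely works for $\alpha r^2$ large.

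A secondary issue: you justify the final comparison by invoking ``the weak maximum principle for $-\Delta+c$ with $c^-\leq M/d$\dots which is the content of Proposition 4.3 of \cite{ruiz2023symmetry}.'' That reference is circular---Proposition~4.3 of \cite{ruiz2023symmetry} \emph{is} the Hopf lemma you are proving. What you want is a plain weak maximum principle on a small domain with a singular negative part, which is a separate (and true) statement: it follows from coercivity of the quadratic form $\int|\nabla u|^2-\int c^-u^2$ once $r<r_0(M)$, obtained by combining the Hardy inequality $\int u^2/d^2\leq 4\int|\nabla u|^2$ with Poincar\'e on $A$. Spelling this out, and replacing the one-sided bound on $c^-$ by the two-sided bound on $|c|$ in the barrier step, would close the argument.
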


\begin{proposition}[The Hopf Lemma in small half balls, see Proposition 4.4 in \cite{ruiz2023symmetry}]\label{Hopf Lemma 2}
  Let $B_{r}^{+}:=B_{r}(0,-r)\cap \{x_1\geq 0\}$, we consider a non-negative smooth solution $h$ to the following elliptic equation in $B_{r}^{+}:$ \begin{equation}
       -\Delta h+ c(x)h=0 \text{ with $c^{-}(x)\leq \frac{M}{dist(x,0)}$.}
   \end{equation} Now, if we further assume $\nabla h(0)=h(0)=0$, there exists a $r_0>0$ depending on $M$, such that if $r<r_0(M)$, we have either $h=0$ or $\frac{\partial^2 h}{\partial \eta  ^2}(0)>0$, for any direction $\eta=(\eta_1,\eta_2)$ satisfying $\eta_1>0$, $\eta_2<0$ in $B_{r}^{+}$. 
\end{proposition}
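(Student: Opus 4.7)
\emph{Proof proposal.} This is a ``second order'' Hopf lemma at a corner: the standard Hopf boundary point lemma (Proposition~\ref{Hopf Lemma 1}) gives a positive \emph{first} normal derivative, but here we have already subtracted off the gradient, so we need a positive second derivative along any direction pointing into the quadrant-like interior. The natural plan is to build an explicit product barrier, show it is a subsolution of $-\Delta+c$ on the small half-ball, and then compare $h$ to it from below.

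I would try the barrier
\[
\phi(x) = x_1 \bigl(e^{-\alpha \rho(x)^2} - e^{-\alpha r^2}\bigr), \qquad \rho(x) = |x-(0,-r)|,
\]
with $\alpha>0$ large, to be chosen in terms of $M$. The prefactor $x_1$ kills $\phi$ on the straight edge $\{x_1=0\}\cap B_r(0,-r)$, and the classical Hopf factor kills $\phi$ on the arc $\partial B_r(0,-r)\cap\{x_1\ge 0\}$, so $\phi=0$ on all of $\partial B_r^+$ and $\phi>0$ inside. Expanding $\rho^2=r^2+2rx_2+|x|^2$ at the origin gives
\[
\phi(x) = -2\alpha r\,e^{-\alpha r^2}\, x_1 x_2 + O(|x|^3),
\]
so $\phi(0)=0$, $\nabla\phi(0)=0$, and the only nonzero second-order derivative is $\partial_{12}\phi(0)=-2\alpha r e^{-\alpha r^2}$. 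For any admissible direction $\eta=(\eta_1,\eta_2)$ with $\eta_1>0$ and $\eta_2<0$,
\[
\partial^2_{\eta\eta}\phi(0) = 2\eta_1\eta_2\,\partial_{12}\phi(0) = -4\alpha r e^{-\alpha r^2}\eta_1\eta_2 > 0,
\]
which is the sign we want.

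Next, using the product rule,
\[
-\Delta\phi + c\,\phi \;=\; -2\,\partial_1 v \;+\; x_1\bigl(-\Delta v + c\,v\bigr),\qquad v = e^{-\alpha\rho^2}-e^{-\alpha r^2}.
\]
Since $\partial_1 v = -2\alpha x_1 e^{-\alpha \rho^2}$ and $\Delta v = 4\alpha(\alpha \rho^2 - 1)e^{-\alpha\rho^2}$, the ``bad'' contribution $-2\partial_1 v = 4\alpha x_1 e^{-\alpha\rho^2}$ and the singular contribution $c^{-} v \le (M/|x|)v$ must be dominated by $-x_1\Delta v = -4\alpha x_1(\alpha\rho^2-1)e^{-\alpha\rho^2}$. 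Using $\rho\in [r/2,r]$ on the relevant annular region, choosing $\alpha$ large in terms of $M$ and then $r<r_0(M)$ small, one checks $-\Delta\phi + c\phi \le 0$ on $B_r^+$, so $\phi$ is a subsolution.

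Finally, I would compare $h$ with $\varepsilon\phi$. If $h\not\equiv 0$ then the strong maximum principle (or Proposition~\ref{Hopf Lemma 1} applied at interior points) yields $h>0$ in the interior, and one gets uniform lower bounds of the form $h\ge \varepsilon_0 x_1$ on the inner arc $\partial B_{r/2}(0,-r)\cap \{x_1>0\}$ and on a small chord $\{x_1=\delta\}\cap B_r^+$. Taking $\varepsilon$ small enough so that $\varepsilon\phi\le h$ on this interior ``parabolic'' boundary, and using that $\varepsilon\phi\le 0=h$ along $\partial B_r^+$, the weak minimum principle for $-\Delta+c$ on the remaining annular half-region (valid for small $r$ because $c^-$ is integrable and $r<r_0(M)$) forces $h\ge \varepsilon\phi$ throughout $B_r^+$; passing to second derivatives at the corner gives $\partial^2_{\eta\eta}h(0)\ge \varepsilon\,\partial^2_{\eta\eta}\phi(0)>0$.

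The main obstacle, as in Proposition~\ref{Hopf Lemma 1}, is the singular potential $c^-(x)\le M/|x|$: one cannot apply a textbook maximum principle, so the whole argument has to be run on a small half-ball where the singular term can be absorbed. Concretely, one must simultaneously (a) take $\alpha$ large enough for $-\Delta\phi+c\phi\le 0$ to survive the addition of a possibly large negative $c\phi$, and (b) take $r<r_0(M)$ small enough that a weighted/Hardy-type maximum principle applies on $B_r^+$. The extra twist compared to Proposition~\ref{Hopf Lemma 1} is the corner geometry, which is why the barrier must be the product $x_1\cdot v$ rather than $v$ alone; keeping control of the mixed term $-2\partial_1 v$ in the subsolution computation is the technical heart of the proof, and is where the exposition in \cite{ruiz2023symmetry} does the delicate work.
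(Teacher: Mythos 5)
The paper does not actually prove this proposition: it is imported as Proposition~4.4 of \cite{ruiz2023symmetry}, and no in-paper argument is given, so there is no proof here to compare your proposal against. The assessment below is therefore of your sketch on its own merits.

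Your corner barrier $\phi = x_1\bigl(e^{-\alpha\rho^2}-e^{-\alpha r^2}\bigr)$ is the right kind of object for a Serrin-corner Hopf lemma, and the Taylor expansion at the corner is correct: $\phi(0)=\nabla\phi(0)=0$, $D^2\phi(0)$ reduces to $\partial_{12}\phi(0)=-2\alpha r e^{-\alpha r^2}$, and hence $\partial^2_{\eta\eta}\phi(0)=-4\alpha r e^{-\alpha r^2}\eta_1\eta_2>0$ for $\eta_1>0,\ \eta_2<0$. You also correctly note that the comparison must be run on an annular half-region $\{\sqrt{2/\alpha}<\rho<r\}\cap\{x_1>0\}$ since $\phi$ is superharmonic near the ball's center, and that passing the inequality $h\ge\varepsilon\phi$ to second order along a ray $\{t\eta\}$ entering the wedge gives the conclusion.

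There is, however, a genuine gap in the subsolution step as written. You assert $-\Delta\phi+c\phi\le 0$ on the annular region, but the hypothesis bounds only $c^{-}$; there is no upper bound on $c^{+}$, so a positive barrier with bounded Laplacian cannot satisfy $\Delta\phi\ge c\phi$ at points where $c^{+}$ is large. What the argument actually needs is the strict inequality $\Delta\phi>c^{-}\phi$ together with a pointwise examination at a hypothetical negative interior minimum $x_0$ of $w=h-\varepsilon\phi$: from $\Delta w(x_0)\ge 0$ one gets $c(x_0)h(x_0)\ge\varepsilon\Delta\phi(x_0)$; if $c(x_0)\le 0$ this forces $\Delta\phi(x_0)\le 0$, contradicting the barrier; if $c(x_0)>0$ one uses $0\le h(x_0)<\varepsilon\phi(x_0)$ to reduce to $\Delta\phi(x_0)<c^{+}(x_0)\phi(x_0)$, which shows that a bound on $c^{+}$ (it is bounded in every application made of this lemma in the paper, but is not stated in the hypothesis) is implicitly being used. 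With the substitution $c\rightsquigarrow c^{-}$ your computation does close: both $\Delta\phi=4\alpha x_1(\alpha\rho^2-2)e^{-\alpha\rho^2}$ and $c^{-}\phi\le(M/|x|)\,x_1 v$ carry a factor of $x_1$, and near the corner $r^2-\rho^2\le 2r|x|$ keeps $v/|x|$ bounded by a multiple of $\alpha r e^{-\alpha r^2}$, so $\Delta\phi\ge c^{-}\phi$ reduces to a condition of the form $\alpha r^2\gtrsim 1+Mr$, achievable with $\alpha\sim r^{-2}$ and $r<r_0(M)$. So the structure of your proof is sound once the subsolution inequality is stated with $c^{-}$ and the sign-at-the-minimum dichotomy is spelled out.
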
 
\section{On the Rigidity of Steady States with Single-Signed Vorticity}\label{application appendix}
We now proceed to make precise statements in Remark \ref{convex mirror}. 
\begin{proposition}\label{convex rigidity}
    Let $\Omega$ be a simply connected, bounded, and convex domain, and $\psi$ be the stream function of a steady Euler flow with non-positive vorticity in $\Omega$. If we further assume that $\Omega$ is even symmetric in the direction $e:=(1,0)$, and $\psi$ is analytic, then $\psi$ is even symmetric in the direction $e$.  
\end{proposition}
The full proof of this theorem follows \textit{inverbatim} from the proof of Theorem \ref{MainTheorem}. We explain the key ideas here.  In the case where $\psi$ is nonradial, by Theorem \ref{MainTheorem}, there exists an $F$ such that $\Delta \psi=F(\psi)$ in $\Omega$. Without loss of generality, we assume $\psi =0$ in $\partial \Omega$.  As $\Delta \psi$ is non-positive, unless $\psi=0$, we have $\psi>0$ in $\Omega$ and  $F$ is analytic near $0$. Without loss of generality, $max_{\Omega} \psi=1$, then $F$ is analytic in $(0,1)$ and admits a Puiseux expansion near $1$ as Lemma \ref{lemma:Puiseux expression near 1}. Analogously to the proof of Lemma \ref{lemma:interior moving}, we can show $\psi$ is even symmetric in direction $e$.
\begin{remark}
An interesting question is to extend the study in Proposition \ref{convex rigidity} when the domain is non-convex. More precisely, in bounded simply connected domains with mirror symmetry, determine whether analytic steady flows with single-signed vorticity should enjoy mirror symmetry.  
\end{remark}

\renewcommand{\baselinestretch}{1.25}

\bibliographystyle{plain}
\bibliography{biblio}
\end{document}